\documentclass[12pt,a4paper,oneside]{article}
\usepackage{amsmath}
\usepackage{amsfonts}
\usepackage{amsthm}
\usepackage[symbol]{footmisc}
\usepackage{csquotes}
\usepackage{mathrsfs}
\usepackage{dsfont}
\usepackage[colorlinks=true,linkcolor=blue]{hyperref}
\newcommand{\PP}{{\text{\usefont{U}{dsrom}{m}{n}P}}}
\newcommand{\Pas}{$\PP$-a.s in $\Omega$}
\newcommand{\HS}{\text{HS}(L^2(D))}
\newcommand{\eps}{\varepsilon}
\newcommand{\pe}{\psi_{\eps}}
\newcommand{\di}{\displaystyle}

\newcommand{\R}{\mathbb{R}}
\newcommand{\na}{\mathbb{N}}
\newcommand{\plap}{\Delta_p^{\mathcal{N}}}
\newcommand{\phe}{\phi_{\eps}}

\newcommand{\ups}{u_{\eps}}
\newcommand{\upsk}{\ups^k}
\newcommand{\upsn}{\ups^n}
\newcommand{\upsz}{\ups^0}
\newcommand{\upskp}{\ups^{k+1}}
\newcommand{\upsnp}{\ups^{n+1}}
\newcommand{\uhnr}{u_{N,\eps}}
\newcommand{\uhnl}{\uhnr^{\mathcal{P}}}

\newcommand{\diver}{\operatorname{div}_x}
\newcommand{\dvz}{\diver^{\mathcal{N}}{\vec{z}}}
\newcommand{\erw}{\mathbb{E}}
\newcommand{\erwb}{\mathbb{E}\left[}
\newcommand{\erwe}{\right]}
\newcommand{\erww}[1]{\mathbb{E}\left[{#1}\right]}
\newtheorem{thm}{Theorem}[section]
\newtheorem{lem}[thm]{Lemma}
\newtheorem{prop}[thm]{Proposition}
\theoremstyle{definition}
\newtheorem{defn}[thm]{Definition}
\theoremstyle{remark}
\newtheorem{rem}[thm]{Remark}
\newtheorem*{ex}{Example}
\numberwithin{equation}{section}
\title{The Neumann problem for a multivalued $p$-Laplace equation of Allen-Cahn type with a multiplicative stochastic force}
\author{Caroline Bauzet\footnotemark[1], \and Fr\'ed\'eric Lebon\footnotemark[1], \and Kerstin Schmitz\footnotemark[2], \and C\'edric Sultan\footnotemark[1], \and Aleksandra Zimmermann\footnotemark[2]}
\date{\today}
\begin{document}
\maketitle
\begin{abstract} 
In this paper, we consider a parabolic problem with constraint written as a differential inclusion, driven by a multiplicative colored noise and involving a $p$-Laplace operator (for $p\geq 2$), nonlinear random source terms and subject to Neumann boundary conditions on a bounded Lipschitz domain of $\mathbb{R}^d$ with $d\geq 1$. This contribution aims at proving existence and uniqueness of a solution for such a multivalued problem.
On one hand, the existence result is proved by the analysis of a semi-implicit time discretization scheme
constructed on a smoother version of our problem, itself obtained by a regularization \enquote{\`a la Moreau-Yosida} of the subdifferential term. The key point of our approach consists in finding a clever relation between the time step denoted $\tau$ and the Moreau-Yosida regularization parameter denoted $\eps$ in view to pass simultaneously to the limit with respect to $\tau$ and $\eps$. On the other hand, the uniqueness of the solution is proved by standard arguments.
\end{abstract}
\noindent\textbf{Keywords:} Stochastic PDE $\bullet$ It\^o integral $\bullet$ Multiplicative Lipschitz noise $\bullet$ Euler-Maruyama scheme $\bullet$ $p$-Laplace operator $\bullet$ Neumann boundary conditions $\bullet$ Stochastic non-linear parabolic equation with constraint $\bullet$ Differential inclusion $\bullet$ Multivalued maximal monotone operator $\bullet$ Variational approach $\bullet$ Lagrange multiplier.\\

\noindent
\textbf{Mathematics Subject Classification (2020):} 60H15; 35K55; 35K92.
\footnotetext[1]{Aix Marseille Univ, CNRS, Centrale Med, LMA, Marseille, France, caroline.bauzet@univ-amu.fr, frederic.lebon@univ-amu.fr, cedric.sultan@univ-amu.fr}
\footnotetext[2]{TU Clausthal, Institut f\"ur Mathematik, Clausthal-Zellerfeld, Germany, kerstin.schmitz@tu-clausthal.de, 
aleksandra.zimmermann@tu-clausthal.de}
\section{Introduction}
\subsection{Statement of the problem}
We consider $T>0$, $D$ a bounded Lipschitz domain of $\R^d$ (with $d\in\na^\star$) and a stochastic basis $(\Omega,\mathcal{F},(\mathcal{F}_t)_{t\geq 0},\mathds{P})$ with on the filtration the usual assumptions, \textit{i.e.,} $\mathcal{F}_0$ contains all negligible sets of $\mathcal{F}$ and $(\mathcal{F}_t)_{t\geq 0}$ is right-continuous.
We are interested in the following multivalued $p$-Laplace equation of Allen-Cahn type forced by a multiplicative stochastic noise, with a given initial condition $u_0$ and homogeneous Neumann boundary conditions (denoting by $\mathbf{n}$ the unit normal vector to $\partial D$ outward to $D$):
\begin{align}\label{equation0}
\begin{aligned}
\beta(u)+f - \plap u-\partial_t\left(u-\int_0^\cdot G(u)\,dW(t)\right) &\in \partial I_{[0,1]}(u), &&\text{ in }\Omega\times(0,T) \times D;\\
u(0,\cdot)&=u_0, &&\text{ in } \Omega\times D;\\
\nabla u\cdot \mathbf{n}&=0, &&\text{ on }\Omega\times(0,T)\times\partial D.
\end{aligned}
\end{align}
In our setting, $p\geq 2$ and $\plap$ is the $p$-Laplace operator associated to the weak homogeneous Neumann boundary conditions defined on the Sobolev space $V:=W^{1,p}(D)$ and with values in the corresponding dual space $V^*$. It is defined by the following duality product: for all $u,v \in V$
\begin{align*}
\left\langle \plap u,v\right\rangle_{V^*,V}
=\int_D \left(|\nabla u(x)|^{p-2} \nabla u(x) \cdot \nabla v(x) + |u(x)|^{p-2}u(x) v(x)\right)\,dx.
\end{align*}
The Hilbert space valued Wiener process $(W(t))_{t\geq 0}$ is defined as follows. Firstly, we denote by $(\mathcal{W}^j)_{j\in\na^\star} = ((\mathcal{W}^j_t)_{t\geq 0})_{j\in\na^\star}$ a sequence of real-valued independent Wiener processes with respect to $(\mathcal{F}_t)_{t\geq 0}$. Secondly, a separable Hilbert space $U$ containing $L^2(D)$ is chosen, as well as a non-negative, symmetric trace class operator $\mathcal{Q}:U\to U$ with $\mathcal{Q}^{1/2}(U)=L^2(D)$. Thirdly, we choose an orthonormal basis $(e_j)_j$ of $U$ consisting in $\mathcal{Q}$ eigenvectors, matched to eigenvalues $(\lambda_j)_j\subset [0,\infty)$. Then, according to \cite[Section 4.1.1]{DPZ14}, the stochastic process $(W(t))_{t\geq 0}$ defined for any $t\geq 0$ by
\begin{equation*}
W(t):=\sum_{j=1}^{\infty}\sqrt{\lambda_j} e_j\mathcal{W}^j_t=\sum_{j=1}^{\infty}\mathcal{Q}^{1/2}(e_j)\mathcal{W}^j_t,
\end{equation*}
is a $(\mathcal{F}_t)$-adapted $\mathcal{Q}$-Wiener process with values in $U$.\\
In the sequel, the product space $\Omega \times (0,T)$ will be denoted by $\Omega_T$, whilst the predictable $\sigma$-field on $\Omega_T$ will be denoted by $\mathcal{P}_T$.\footnote[3]{$\mathcal{P}_{T}:=\sigma(\{ (s,t]\times F_s \ | \ 0\leq s < t \leq T, \ F_s\in \mathcal{F}_s \} \cup \{\{0\}\times F_0 \ | \ F_0\in \mathcal{F}_0 \})$ (see \cite[p. 33]{LR}). Note that a map to a separable Banach space $E$ from $\Omega_T$ is predictable when it is $\mathcal{P}_{T}$-measurable.}
For any separable Banach space $X$ and $1\leq q<\infty$, the subset of $L^q(\Omega_T;X)$ containing predictable processes will be denoted by
\begin{equation*}
L^q_{\mathcal{P}_T}(\Omega_T;X):=\left\{v\in L^q(\Omega_T;X) \text{ such that } v\text{ is } \mathcal{P}_T\text{-measurable} \right\}.
\end{equation*}
The measurability of $v\in L^q_{\mathcal{P}_T}(\Omega_T;X)$ is here understood as the predictable measurability and in particular $v$ is a random variable with values in $L^q(0,T;X)$.\\
Denoting the separable Hilbert space of Hilbert-Schmidt operators from $L^2(D)$ to $L^2(D)$ by $\HS$, the stochastic It\^{o} integral of a mapping $\mathscr{H} \in L^2_{\mathcal{P}_T}(\Omega_T;\HS)$ with respect to $(W(t))_{t\geq 0}$ may be defined as in \cite[Section 4.2]{DPZ14} and will be denoted by
\begin{equation*}
\int_0^t \mathscr{H}(s)\,dW(s), \ t \in [0,T].
\end{equation*}
The indicator function $I_{[0,1]}:\R\to\R\cup
\{+\infty\}$ being defined as
\begin{align*}
I_{[0,1]}(r)=\begin{cases} 0 & \text{if} \ r\in [0,1]\\
+\infty & \text{else}
\end{cases},
\end{align*}
its sub-differential is the following multivalued monotone operator $\partial I_{[0,1]}:[0,1]\to\mathcal{P}(\R)$:
\begin{align*}
\partial I_{[0,1]}(r)=
\begin{cases}
(-\infty,0] & \text{if} \ r=0\\
\{0\} & \text{if} \ r\in (0,1)\\
[0,\infty)& \text{if} \ r=1.
\end{cases}
\end{align*}
Note that Problem \eqref{equation0} takes a differential inclusion form and can be rewritten using Lagrange multipliers. Finding a solution $u$ for Problem \eqref{equation0} is equivalent to looking for a pair $(u,\psi)$ satisfying the following system written in a probabilistic form:
\begin{align}\label{equation}
\begin{aligned}
du+(\plap u+\psi)\,dt &=G(u)\,dW(t)+\left(\beta(u)+f\right)\,dt, &&\text{ in }\Omega\times(0,T) \times D;\\
\psi&\in \partial I_{[0,1]}(u)&&\text{ in }\Omega\times(0,T) \times D;\\
u(0,\cdot)&=u_0, &&\text{ in } \Omega\times D;\\
\nabla u\cdot \mathbf{n}&=0, &&\text{ on }\Omega\times(0,T)\times\partial D.
\end{aligned}
\end{align}
We have the following assumptions on the data:
\begin{itemize}
\item[$(H_1)$:] $u_0\in L^2\left(\Omega;L^2(D)\right)$ is such that $0\leq u_0(\omega,x) \leq 1$, for almost all $(\omega,x)\in \Omega\times D$. Moreover, $u_0$ is $\mathcal{F}_0$-measurable.
\item[$(H_2)$:] $\beta: \R\to\R$ is a $L_\beta$-Lipschitz continuous function (with $L_\beta > 0$) such that, for convenience, $\beta(0)=0$. 
\item[$(H_3)$:] $f\in L^2_{\mathcal{P}_T}(\Omega_T;L^2(D))$.
\end{itemize}
\begin{itemize}
\item[$(H_4)$:] For any $j\in\na^\star$ there exists a Lipschitz continuous function $g_j:\R\to\R$ such that $\operatorname{supp}(g_j)\subset [0,1]$
and the diffusion coefficient $G:L^2(D)\to\HS$ satisfies
\begin{equation*}
G(v)\left(\mathcal{Q}^{1/2}(e_j)\right)=g_j(v), \ \forall v\in L^2(D).
\end{equation*}
\item[$(H_5)$:] There exists a constant $K_g\geq 0$ such that
\begin{equation*}
\sup_{j\in\na^\star} \|g_j\|_{\infty} \leq K_g.
\end{equation*}
\item[$(H_6)$:] There exist constants $r \in [1,2)$ and $L_{G,r}>0$ such that
\begin{equation*}
\left(\sum_{j=1}^\infty \|g_j'\|_{\infty}^r\right)^{\frac{1}{r}} \leq L_{G,r}.
\end{equation*}
\end{itemize}
Let us mention that using Assumption $(H_4)$, the stochastic It\^o integral of the diffusion coefficient $G$ can be expressed in the following manner (see \cite[Proposition 2.4.5]{LR})
\begin{align}\label{250813_01}
\int_0^t G\left(u(s,\cdot)\right)\,dW(s) = \sum_{j=1}^{\infty} \sqrt{\lambda_j} \int_0^t g_j\left(u(s,\cdot)\right)\,d\mathcal{W}^j_s.
\end{align}
Moreover, as a consequence of Assumptions $(H_4)$ and $(H_6)$, one obtains the following lemma which will be used several times in the paper:
\begin{lem}\label{lemma_G1_G3}
For any $q \geq r$, there exists a constant $L_{G,q} > 0$ such that
\begin{align}\label{caro_11_02_a}
\left(\sum_{j=1}^\infty \|g_j'\|_{\infty}^q\right)^{\frac{1}{q}} \leq L_{G,q}.
\end{align}
In particular, for $q=2$, for all $v,w \in L^2(D)$,
\begin{align}\label{G3_inequality_G_q2}
\|G(v) - G(w)\|_{\HS}^2 \leq L_{G,2}^2 \|v - w\|_{L^2(D)}^2.
\end{align}
\end{lem}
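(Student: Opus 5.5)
The first claim is the embedding $\ell^r\subset\ell^q$ for $q\geq r$. Set $a_j:=\|g_j'\|_{\infty}$. By $(H_6)$, $\sum_j a_j^r\leq L_{G,r}^r<\infty$, so each term satisfies $a_j\leq L_{G,r}$. Hence, for $q\geq r$,
\begin{align*}
\sum_{j=1}^\infty a_j^q=\sum_{j=1}^\infty a_j^{q-r}a_j^r\leq L_{G,r}^{q-r}\sum_{j=1}^\infty a_j^r\leq L_{G,r}^q .
\end{align*}
This gives \eqref{caro_11_02_a} with $L_{G,q}:=L_{G,r}$; any positive constant at least this large also works. Alternatively, one can use the standard monotonicity $\|a\|_{\ell^q}\leq\|a\|_{\ell^r}$ directly. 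Since $r<2$, the case $q=2$ is included, and $\sum_j\|g_j'\|_\infty^2\leq L_{G,2}^2$.

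For \eqref{G3_inequality_G_q2}, I would compute the Hilbert--Schmidt norm in the orthonormal system $(\mathcal{Q}^{1/2}e_j)_{j}$, restricted to the indices with $\lambda_j>0$. This is the convention of $(H_4)$ and of \eqref{250813_01}: the HS norm is taken with respect to $\mathcal{Q}^{1/2}(U)=L^2(D)$ equipped with the norm transported from $U$. This basis is the one in which $G$ is prescribed. In it,
\begin{align*}
\|G(v)-G(w)\|_{\HS}^2=\sum_{j=1}^\infty\|g_j(v)-g_j(w)\|_{L^2(D)}^2 .
\end{align*}

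Each $g_j$ is Lipschitz, and its Lipschitz constant equals $\|g_j'\|_\infty$ by Rademacher's theorem together with the mean value inequality. So pointwise in $x$ we have $|g_j(v(x))-g_j(w(x))|\leq\|g_j'\|_\infty|v(x)-w(x)|$. Squaring, integrating over $D$ and summing over $j$ gives
\begin{align*}
\|G(v)-G(w)\|_{\HS}^2\leq\Bigl(\sum_{j=1}^\infty\|g_j'\|_\infty^2\Bigr)\|v-w\|_{L^2(D)}^2\leq L_{G,2}^2\|v-w\|_{L^2(D)}^2 .
\end{align*}

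The only delicate point is the identification of the Hilbert--Schmidt norm with $\sum_j\|g_j(\cdot)\|_{L^2}^2$, which depends on the convention for the basis $\mathcal{Q}^{1/2}e_j$. I would state explicitly that this is the convention used, matching $(H_4)$ and \eqref{250813_01}. Once that is fixed, the rest is the elementary $\ell^r\subset\ell^2$ estimate above.
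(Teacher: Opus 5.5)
Your proof is correct. For \eqref{G3_inequality_G_q2} you write out the argument the paper only calls ``a direct consequence''; the real difference is in how you obtain \eqref{caro_11_02_a}.

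The paper splits the series into a head and a tail. Since $\sum_j\|g_j'\|_\infty^r<\infty$, there is an index $J$ with $\|g_j'\|_\infty\leq 1$ for $j\geq J$, so $\|g_j'\|_\infty^q\leq\|g_j'\|_\infty^r$ on the tail. Each of the first $J-1$ terms is at most $L_{G,r}^q$, which gives $L_{G,q}=\bigl((J-1)L_{G,r}^q+L_{G,r}^r\bigr)^{1/q}$. You instead interpolate against the uniform bound $\|g_j'\|_\infty\leq L_{G,r}$, which is the standard inequality $\|a\|_{\ell^q}\leq\|a\|_{\ell^r}$ with $a_j=\|g_j'\|_\infty$.

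Your version needs no case distinction and no index $J$. That index depends on the decay of the particular sequence $(g_j)$, not only on the constants in $(H_6)$. Your constant, $L_{G,q}=L_{G,r}$, depends only on the data; for instance, the constant $C_1$ in Proposition \ref{boundpe} becomes simply $(2K_g)^{\theta}L_{G,r}^2$. The paper's splitting can give a smaller number in special cases (if all $\|g_j'\|_\infty\leq 1$ it yields $L_{G,r}^{r/q}$), but nothing later uses this.

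Your explicit choice of Hilbert--Schmidt norm, computed in the orthonormal system $(\mathcal{Q}^{1/2}e_j)_{\lambda_j>0}$ of $\mathcal{Q}^{1/2}(U)$, is a useful clarification that the paper leaves implicit. It is what makes $\|G(v)-G(w)\|_{\HS}^2=\sum_j\|g_j(v)-g_j(w)\|_{L^2(D)}^2$ hold with the exact constant $L_{G,2}^2$. In the plain $L^2(D)$ inner product, the vectors $\mathcal{Q}^{1/2}e_j$ need not be orthonormal. It is also the convention under which the It\^o isometry is applied in Proposition \ref{bounds}.

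One small precision: the pointwise bound $|g_j(a)-g_j(b)|\leq\|g_j'\|_\infty|a-b|$ holds because a Lipschitz $g_j$ is absolutely continuous, so $g_j(b)-g_j(a)=\int_a^b g_j'(s)\,ds$. Almost-everywhere differentiability (Rademacher) alone would not give it.
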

\begin{proof}
If $q=r$, then \eqref{caro_11_02_a} holds directly by $(H_6)$. If $q>r$, from $(H_6)$ it follows that there exists $J\in \na^\star$ such that 
\begin{align*}
\sum_{j=1}^\infty \|g_j'\|_{\infty}^q
\leq &(J-1) L_{G,r}^q + L_{G,r}^r,
\end{align*}
and \eqref{caro_11_02_a} is obtained by setting $L_{G,q} := \left( (J-1) L_{G,r}^q + L_{G,r}^r \right)^{\frac{1}{q}}$. Then, \eqref{G3_inequality_G_q2} is a direct consequence of \eqref{caro_11_02_a} with $q=2$.
\end{proof}
At last, let us remark that whilst Assumption $(H_4)$ is common in the literature, $(H_5)$ and $(H_6)$ are not classic but seem not too restrictive for applications. Assumptions $(H_5)$ and $(H_6)$ will be needed in Proposition \ref{boundpe} because of the multiplicative interaction between the constraint $\psi(u)$ and the diffusion coefficient $G$ that is not necessarily Lipschitz continuous but an infinite sum of Lipschitz continuous functions.
\begin{ex}
Let us give some examples of functions $(g_j)_{j\in\na^\star}$ satisfying $(H_4)$-$(H_6)$. By setting $\alpha\in[1,2)$ and $\mu_j\in\ell^\alpha(\R)$ for any $j\in\na^\star$, we can consider the sequence $(g_j)_{j\in\na^\star}$ defined by $g_j: v \mapsto \mu_j \varphi(v)$, where $\varphi:\R\to\R$ is Lipschitz continuous with $\operatorname{supp}(\varphi)\subset [0,1]$. One can also take $g_j: v \mapsto \varphi(\mu_j v)\chi$, where $\varphi:\R \to \R$ is a bounded, Lipschitz continuous function and $\chi$ is a smooth cutoff function with $\operatorname{supp}(\chi)\subset [0,1]$, $0\leq \chi\leq 1$.
\end{ex}
\subsection{Motivation and state of the art}
In general, the proposed system of equations \eqref{equation}, of the phase-field type, models complex and evolving phenomena such as phase changes and damage. According to the model proposed in \cite{BBBLV} where we studied the particular case of Problem \eqref{equation0} with $p=2$ and a one-dimensional Brownian motion $(W(t))_{t\geq 0}$ from a theoretical point of view, we have in mind that $u$ is a parameter describing in continuum media the evolution of damage. Indeed, in the micro-structure of a material, $u$ locally represents the proportion of cohesive active bonds. Next, the multivalued term $\partial I_{[0,1]}(u)$ in \eqref{equation0} represents a physical constraint on $u$ which is forced to take values in the interval $[0,1]$. The $p$-Laplacian term $\plap u$ allows us to account for the microcracking zone (process zone), which has finite dimension even though it is difficult to measure. This non-local term can also be interpreted as an \enquote{homogenization} of the microcracked zone. The damage is diffuse, progressive, and widespread. Another argument is that without this term, the dissipated energy by damage is often mesh-dependent, which contradicts the fact that this energy is a material property. In the case of damage, the presence of a Laplacian term highlights the well-known non-local nature of the phenomenon \cite{fremond}. The more general $p$-Laplacian term can be used to enrich classical models \cite{LRR} and appears promising for modelling complex non-local effects. More specifically, the $p$-Laplacian allows for greater flexibility in modeling nonlinear phenomena, as it generalizes the classical Laplacian operator ($p=2$, treated in \cite{BBBLV}) to model diffusion processes \cite{Anguiano2023,BGKP18} where the velocity depends nonlinearly on the gradient (\textit{e.g.,} flow in porous media, heat diffusion in heterogeneous materials). Depending on the value of $p$, the model can capture slow diffusion regimes ($p>2$) or fast diffusion regimes ($1<p<2$), or even \enquote{sandpile} phenomena ($p\to\infty$) \cite{AMRT09}. The $p$-Laplacian is also used in image processing \cite{Kuijper2007} (denoising and segmentation), as it preserves edges better than the classical Laplacian, in fluids mechanics (modeling non-Newtonian flows such as threshold fluids), and in biology \cite{LIU2026104478} (describing cell migration or tumor growth). In addition, the stochastic nature of the system means that the variability of phenomena on a microscopic scale (crack opening and closure, micro-adhesion, etc.) can be taken into account. Stochasticity is introduced to the Allen-Cahn problem by a multiplicative stochastic force in the form of an It\^{o} integral.

As we mentioned in \cite{BSVZ2025}, stochastic Allen-Cahn equation as well as more general stochastic partial differential inclusions than the one proposed in this paper have been very popular research subjects in recent decades, both from a theoretical and numerical point of view (see the following contributions in chronological order \cite{Rascanu96,BarbuRascanu97,BensoussanRascanu97,RNT12,AKM16,TV20,B21,BOS22,DPGS22,DPGS23,SZ23,M23,SYVZ25,Tah25,DPFSZ26,MYRZ26} and \cite{FP03,QW19,BG20,ABNP21,LQ21,BGJK23,BP24,SZ25}). For a thorough exposition of these papers, we refer the reader to the introduction of \cite{BSVZ2025}.\\
To the best of our knowledge, an Allen-Cahn problem including a $p$-Laplace operator, particularly associated with Neumann boundary conditions, in a multivalued setting with a multiplicative stochastic force as presented in Problem\eqref{equation0} has not yet been considered in the literature. In the present study, we intend to fill this gap and to propose an existence and uniqueness result for such a problem. Our strategy consists in showing the existence of a solution through the convergence analysis of a semi-implicit time-discretization scheme for a regularized version of Problem\eqref{equation0}, as we did in a former study \cite{BBBLV} for a particular case of Problem\eqref{equation0} as mentioned above. The well-posedness of such a time-discretization scheme has been proved in the preliminary work \cite{BSSZ2026} by the Minty-Browder theorem.
Let us mention that the originality of our current approach, compared to \cite{BBBLV}, is that the passages to the limit with respect to the time step and the regularization parameter will be done simultaneously, following our recent study \cite{BSVZ2025} where we analysed the convergence of a time and space approximation scheme for the problem studied in \cite{BBBLV}. 
\subsection{Main result and outline of the paper}
For Problem \eqref{equation}, we are interested in the following solution type:
\begin{defn}\label{solution}
A pair of processes $(u,\psi)\in L^p_{\mathcal{P}_T}(\Omega_T;V)\times L^2_{\mathcal{P}_T}(\Omega_T;L^2(D))$ such that
\begin{align*}
u \in L^2(\Omega;\mathscr{C}([0,T];L^2(D)))
\end{align*}
is called a solution to Problem \eqref{equation} if a.e. in $\Omega\times (0,T)\times D$
\begin{equation*}
0\leq u \leq 1\text{ and }\psi\in \partial I_{[0,1]}(u),
\end{equation*}
and if \Pas \ and for all $t\in [0,T]$,
\begin{align}\label{130426_b}
\hspace*{-0.25cm}u(t)-u_0+\int_0^t \left(\plap u(s)+\psi(s)\right)\,ds =\int_0^t G(u(s))\,dW(s)+\int_0^t \left(\beta(u(s))+f(s)\right)\,ds
\end{align}
in $L^2(D)$.
\end{defn}
\begin{rem}\label{240527_01}
The process $u$ is, \textit{a priori}, predictable with values in $L^2(D)$. As a direct consequence of Pettis measurability theorem, we will show the predictability of $u$ with values in $V$ in Lemma \ref{addreg u}.
\end{rem}
The main result of this contribution is existence and uniqueness of a solution $(u,\psi)$ for Problem \eqref{equation} in the sense of Definition \ref{solution} as stated in the following theorem:
\begin{thm}\label{main}
Under assumptions $(H_1)$ to $(H_6)$, there exists a unique solution $(u,\psi)$ for Problem \eqref{equation} in the sense of Definition \ref{solution}.
\end{thm}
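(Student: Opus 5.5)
The plan is to follow the strategy announced in the abstract and introduction: build the solution as the limit of a semi-implicit Euler--Maruyama scheme applied to a Moreau--Yosida regularization of \eqref{equation}, with the regularization parameter $\eps$ tied to the time step $\tau$. Uniqueness will then follow from an It\^o/contraction argument.

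\textbf{Regularized scheme.} For $\eps>0$ we replace $\partial I_{[0,1]}$ by its Yosida approximation
\[
\pe(r)=\frac{1}{\eps}\bigl(r-\operatorname{proj}_{[0,1]}(r)\bigr),
\]
which is monotone and $1/\eps$-Lipschitz. For $\tau=T/N$ we consider the scheme
\[
\upskp-\upsk+\tau\bigl(\plap\upskp+\pe(\upskp)\bigr)=G(\upsk)\,\Delta_{k+1}W+\tau\bigl(\beta(\upsk)+f_k\bigr),
\]
where $f_k$ is the time average of $f$ on the $k$-th subinterval. This scheme is well posed and measurable by the Minty--Browder theorem, as in \cite{BSSZ2026}.

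\textbf{Uniform estimates.} Test the scheme with $\upskp$ and use the identity $2a(a-b)=a^2-b^2+(a-b)^2$, the Lipschitz bound \eqref{G3_inequality_G_q2}, $(H_2)$, $(H_3)$ and a discrete Gronwall lemma. This gives, uniformly in $(\tau,\eps)$, bounds for
\[
\erw\max_k\|\upsk\|_{L^2(D)}^2,\qquad \erw\,\tau\sum_k\|\upsk\|_V^p,\qquad \erw\sum_k\|\upskp-\upsk\|_{L^2(D)}^2.
\]

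\textbf{Key step: bound on the penalty term.} We need a uniform bound on $\erw\,\tau\sum_k\|\pe(\upskp)\|_{L^2(D)}^2$, which is what Proposition \ref{boundpe} will provide. To get it, test the scheme with $\pe(\upskp)$. Monotonicity of $\pe$ gives $\langle\plap\upskp,\pe(\upskp)\rangle\ge 0$. The time-difference term is handled through the convex primitive $\Psi_\eps$ of $\pe$. Because $u_0\in[0,1]$, we have $\Psi_\eps(u_0)=0$.

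The dangerous contribution is the noise term
\[
\bigl(G(\upsk)\Delta_{k+1}W,\ \pe(\upskp)\bigr).
\]
Since $\operatorname{supp} g_j\subset[0,1]$, we can write $\pe(\upskp)=\bigl(\pe(\upskp)-\pe(\upsk)\bigr)+\pe(\upsk)$. The part involving $\pe(\upsk)$ vanishes, because $g_j(\upsk)\pe(\upsk)=0$ pointwise. The part involving $\pe(\upskp)-\pe(\upsk)$ is bounded by
\[
\frac{1}{\eps}\,\bigl|g_j(\upsk)\Delta W^j\bigr|\,\bigl|\upskp-\upsk\bigr|.
\]
I would then localize near the constraint boundary using $(H_5)$ and $(H_6)$ with exponent $r<2$. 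The aim is an estimate of the form
\[
\erw\,\tau\sum_k\|\pe(\upskp)\|_{L^2(D)}^2\le C\Bigl(1+\frac{\tau^{\gamma}}{\eps^{\delta}}\Bigr)
\]
for some positive exponents $\gamma,\delta$. Choosing $\eps=\eps(\tau)\to0$ with $\tau^\gamma\eps^{-\delta}$ bounded then yields the uniform bound. This is the main obstacle. It explains the need for the non-standard hypotheses $(H_5)$–$(H_6)$ and the coupling between $\tau$ and $\eps$. As a consequence,
\[
\erw\,\tau\sum_k\|(\upsk-1)^+\|_{L^2(D)}^2+\erw\,\tau\sum_k\|(\upsk)^-\|_{L^2(D)}^2\le C\eps^2\to0,
\]
so the limit satisfies the constraint $0\le u\le1$.

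\textbf{Passage to the limit.} Take the piecewise constant and piecewise affine interpolants of the scheme. Extract weak limits $u\in L^p_{\mathcal P_T}(\Omega_T;V)$, $\psi\in L^2_{\mathcal P_T}(\Omega_T;L^2(D))$ and $\chi\in L^{p'}(\Omega_T;V^*)$ as the weak limit of $\plap$ applied to the interpolant; the interpolants have the same limit thanks to the increment estimate. The stochastic integrals converge by weak continuity of the It\^o integral together with the Lipschitz property of $G$. To identify the nonlinearities, first show that the sequence is Cauchy in $L^2(\Omega_T;L^2(D))$. For this, compare two approximations with parameters $(\tau,\eps)$ and $(\tau',\eps')$ via It\^o/energy estimates. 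The penalty cross terms are controlled by $(\eps+\eps')$ times the uniform bound, using the classical inequality $(\pe(a)-\psi_{\eps'}(b))(a-b)\ge -C(\eps+\eps')\bigl(|\pe(a)|^2+|\psi_{\eps'}(b)|^2\bigr)$. Strong convergence then identifies $\beta(u)$ and $G(u)$ directly. The limit $\chi=\plap u$ follows from a Minty argument based on the It\^o energy equality for $u$. Finally, $\psi\in\partial I_{[0,1]}(u)$ follows from the closedness of maximal monotone graphs under strong$\times$weak convergence. Continuity $u\in L^2(\Omega;\mathscr C([0,T];L^2(D)))$ comes from the It\^o formula in the Gelfand triple $V\subset L^2(D)\subset V^*$, and Lemma \ref{addreg u} gives predictability in $V$.

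\textbf{Uniqueness.} Let $(u_1,\psi_1)$ and $(u_2,\psi_2)$ be two solutions. Apply It\^o's formula to $\|u_1-u_2\|_{L^2(D)}^2$. The following terms are all nonnegative:
\begin{align*}
&\langle\plap u_1-\plap u_2,\,u_1-u_2\rangle_{V^*,V}\ge 0 &&\text{(monotonicity of }\plap),\\
&(\psi_1-\psi_2)(u_1-u_2)\ge0 &&\text{(monotonicity of }\partial I_{[0,1]}).
\end{align*}
The remaining terms are controlled with $(H_2)$ and \eqref{G3_inequality_G_q2}. Taking expectations and applying Gronwall's lemma gives $u_1=u_2$. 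Substituting back into \eqref{130426_b} then shows $\int_0^t(\psi_1-\psi_2)\,ds=0$ for all $t$, hence $\psi_1=\psi_2$.
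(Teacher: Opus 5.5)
Your overall architecture matches the paper's: Yosida regularization, a semi-implicit Euler--Maruyama scheme, energy bounds, a penalty bound exploiting $\operatorname{supp} g_j\subset[0,1]$ under a coupling of $\tau$ and $\eps$, and It\^o-based uniqueness. The uniqueness part is fine.

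\textbf{The main gap: the Cauchy property.} Every identification you make ($\mathcal G=G(u)$, $\mathcal B=\beta(u)$, $\psi\in\partial I_{[0,1]}(u)$) rests on the claim that the discrete approximations are Cauchy in $L^2(\Omega_T;L^2(D))$. You obtain this by comparing $(\tau,\eps)$ with $(\tau',\eps')$ ``via It\^o/energy estimates'', but no workable mechanism is given. Given that property, your downstream steps would go through; the problem is the property itself. The Yosida inequality you quote only handles the easy penalty cross term. The real obstacles are these:
\begin{itemize}
\item The interpolants of an implicit scheme are not adapted, since $\widehat u_{N,\eps}$ on $[t_n,t_{n+1})$ involves $\upsnp$. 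So there is no It\^o formula for $\|\widehat u_{N,\eps}-\widehat u_{N',\eps'}\|_{L^2(D)}^2$.
\item At the discrete level you must compare noise increments on two unrelated grids.
\item The $p$-Laplacian acts on $\uhnr$ while the time derivative acts on $\widehat u_{N,\eps}$. This produces mixed pairings such as $\langle\plap\uhnr,\widehat u_{N',\eps'}-u_{N',\eps'}\rangle_{V^*,V}$, which are at best bounded, not small. You only know $\tau\sum_k\erw\|\upskp\|_V^p\le K_0$, and since $u_0\notin V$ these pairings are not even defined on the first subinterval.
\end{itemize}

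\textbf{How the paper avoids it.} It never needs strong convergence as an input.
\begin{itemize}
\item It tests the scheme with $e^{-ct_k}\upskp$, $c>L_{G,2}^2+2L_\beta$, and expands $G(\uhnr)$, $\beta(\uhnr)$ and $\|\uhnr\|^2$ around $u$.
\item The quadratic remainder $\|G(\uhnr)-G(u)\|_{\HS}^2-c\|\uhnr-u\|_{L^2(D)}^2+2(\beta(\uhnr)-\beta(u),\uhnr-u)_{L^2(D)}\le 0$ is dropped.
\item The rest passes to the $\limsup$ using weak limits only, together with $\erw\int_0^T\!\int_D\pe(\uhnr)\uhnr\to\erw\int_0^T\!\int_D\psi_2$ (Remark~\ref{cvtpsi2}).
\item Subtracting the It\^o formula for the limit equation gives $\limsup\int_0^T\!\int_0^t e^{-cs}\erw\langle\plap\uhnr,\uhnr\rangle\le\int_0^T\!\int_0^t e^{-cs}\erw\langle-\dvz+y,u\rangle$. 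Minty's trick then yields $-\dvz+y=\plap u$.
\item Next, $\int_0^T\!\int_0^t e^{-cs}\erw\int_D(\psi_2-\psi u)\le 0$ together with $\psi_2-\psi u\ge 0$ gives $\psi\in\partial I_{[0,1]}(u)$.
\item Only afterwards does $C_p|\zeta-\eta|^p\le(|\zeta|^{p-2}\zeta-|\eta|^{p-2}\eta)\cdot(\zeta-\eta)$ give strong convergence in $L^p(\Omega_T;V)$, which identifies $\mathcal G$ and $\mathcal B$.
\end{itemize}

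\textbf{Second gap: the penalty estimate.} You flag this step as the main obstacle but do not carry it out, and the bound you do write does not close. Starting from $\eps^{-1}|g_j(\upsk)|\,|\Delta_{k+1}\mathcal W^j|\,|\upskp-\upsk|$, Young's inequality leaves $\eps^{-2}\erw\bigl(\sum_j\sqrt{\lambda_j}|g_j(\upsk)||\Delta_{k+1}\mathcal W^j|\bigr)^2$. That is of order $\tau\eps^{-2}$ per step and $T\eps^{-2}$ in total, whatever the coupling.

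The missing observation is this. Wherever $\pe(\upskp)\neq 0$ one has $\upskp\notin[0,1]$, so $g_j(\upskp)=0$. Hence $|g_j(\upsk)|=|g_j(\upsk)-g_j(\upskp)|\le\min\bigl(2K_g,\|g_j'\|_\infty|\upskp-\upsk|\bigr)$.

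With this, after Cauchy--Schwarz in $j$, Young's inequality with $q=1+2/\theta$ splits the product into two pieces:
\begin{itemize}
\item The first is $\eps^{-2q}\bigl(\sum_j\lambda_j|\Delta_{k+1}\mathcal W^j|^2\bigr)^q$, whose expectation is $O(\tau^q\eps^{-2q})$.
\item The second satisfies $\bigl(\sum_j|g_j(\upsk)-g_j(\upskp)|^2\bigr)^{q/(q-1)}\le C|\upskp-\upsk|^2$. To see this, write $|a_j|^2=|a_j|^{2/q}|a_j|^{2(q-1)/q}$ and use $(H_5)$ together with Lemma~\ref{lemma_G1_G3} for the exponent $4/(2+\theta)\ge r$.
\end{itemize}
Summing over $k$ gives $CK_0+CT(\tau/\eps^{2+\theta})^{2/\theta}$. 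This is exactly where $r<2$ and $\tau=\mathcal O(\eps^{2+\theta})$ come from.
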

To present the proof of our main result, we structure this paper as follows.
In Section \ref{section2}, we will introduce a regularized version of our problem (with a parameter $\eps\in\R_+^\star$) for which we will construct a time discretization scheme (with a parameter $\tau>0$) and define a set of approximating sequences. In Section \ref{estimates}, we will derive stability estimates for these approximations, independently of $\eps$ and $\tau$. The boundedness results will allow us to pass to the limit with respect to $\eps$ and $\tau$ simultaneously. This will be done in Section \ref{Section4} by extracting weakly converging sequences. After identifying limits coming from the discretization of non-linear terms, we will be able to prove the existence of a solution for Problem \eqref{equation} in the sense of Definition \ref{solution}. Then, the rest of Section \ref{Section4} will be devoted to the proof of the uniqueness of such a solution.
\section{Time discretization scheme for a regularized version of our multivalued problem}\label{section2}
Following our previous approaches in \cite{BBBLV, BSVZ2025}, we use a standard regularization procedure for the maximal monotone operator $\partial I_{[0,1]}$ by considering a sequence of approximations \enquote{\`a la Moreau-Yosida} (see, \textit{e.g.,}\ \cite{barbu76,brezis73}) denoted by $(\pe)_{\eps\in\R_+^\star}$ and defined for all $\eps\in\R_+^\star$ and all $v\in\R$ by
 \begin{align}\label{SPPR}
 	\begin{aligned}
 		\pe(v)=-\frac{(v)^-}{\eps}+\frac{(v-1)^{+}}{\eps}=
 		\left\{\begin{array}{clll}
 			\di\frac{v}{\eps}&\text{if}&v\leq 0\smallskip\\
 			0&\text{if}&v\in[0,1]\smallskip\\
 			\di\frac{v-1}{\eps}&\text{if}&v\geq 1.
 		\end{array}\right.
 	\end{aligned}
 \end{align}
Then, for a fixed parameter $\eps\in\R_+^\star$, we are interested in constructing a time discretization scheme for the following parabolic equation:
\begin{align}\label{eqeps}
\begin{aligned}
du_\eps+\big(\plap u_\eps+\pe(u_\eps)\big)\,dt &=G(u_\eps)\,dW(t)+\left(\beta(u_\eps)+f\right)\,dt, &&\text{ in }\Omega\times(0,T) \times D;\\
u_\eps(0,\cdot)&=u_0, &&\text{ in } \Omega\times D;\\
\nabla u_\eps\cdot \mathbf{n}&=0, &&\text{ on }\Omega\times(0,T)\times\partial D.
\end{aligned}
\end{align}
For $N\in\na^\star$, let $0 = t_0 \leq t_1 \leq \ldots \leq t_N = T$ be a subdivision of the interval $[0,T]$ with a time step $\tau=T/N=t_{n+1}-t_{n}$ for all $n\in\{0,\ldots,N-1\}$. For a fixed $\eps\in\R_+^\star$, we set $u_\eps^0=u_0$. We define our Euler-Maruyama semi-implicit scheme as follows.
Given a $\mathcal{F}_{t_n}$-measurable random variable $\upsn$ with values in $L^2(D)$ for a fixed $n\in \{0,\ldots,N-1\}$, we look for a $\mathcal{F}_{t_{n+1}}$-measurable random variable labelled by $\upsnp$, with values in $V$, such that the following identity holds \Pas , in $L^2(D)$
\begin{align}\label{240911_01}
&\upsnp-\upsn+\tau\left(\plap \upsnp+\pe(\upsnp)\right)=G(\upsn)\Delta_{n+1} W +\tau\left(\beta(\upsnp)+f_n\right),
\end{align}
where $\Delta_{n+1}W = W(t_{n+1})-W(t_{n})$ and $\displaystyle f_n = \frac{1}{\tau} \int_{t_n}^{t_{n+1}} f(s)\,ds$, for all $n\in\{0,\ldots,N-1\}$.
\begin{rem}
Let us mention that owing to the $\mathcal{F}_{t_n}$-measurability of $\upsn$ with values in $L^2(D)$ we have (\cite[Section 4.2.1]{DPZ14})
\begin{align*}
G(\upsn)\Delta_{n+1} W
= \int_{t_{n}}^{t_{n+1}}G(\upsn) \ dW(t)
&= \sum_{j=1}^{\infty}\big(G(\upsn)\circ \mathcal{Q}^{1/2}\big)(e_j)\left(\mathcal{W}^j(t_{n+1})-\mathcal{W}^j(t_n)\right)\nonumber,
\end{align*}
where the infinite sum is an element of $L^2\left(\Omega;L^2(D)\right)$ because $\Delta_{n+1}W$ values are in the Hilbert space $U$ and $G(\upsn)\circ \mathcal{Q}^{1/2}: U \to L^2(D)$ is a Hilbert-Schmidt operator. Moreover, due to assumption $(H_4)$, we arrive at
\begin{align*}
&G(\upsn)\Delta_{n+1} W = \sum_{j=1}^{\infty}g_j(\upsn)\left(\mathcal{W}^j(t_{n+1})-\mathcal{W}^j(t_n)\right).
\end{align*}
\end{rem}
By the Minty-Browder theorem, we proved in \cite{BSSZ2026} the well-posedness of the scheme as stated in the following proposition.
\begin{prop}\label{well-posedness_scheme}
Let us assume that Hypotheses $(H_1)$ to $(H_4)$ and $(H_6)$ are satisfied. Consider a fixed parameter $\eps\in\R_+^\star$, a fixed $N\in\na^\star$, define $\tau=T/N$ and $t_n=n\tau$ for any $ n\in\{0,\ldots,N\}$, and set $u_\eps^0=u_0$. Let $n\in\{0,\ldots,N-1\}$. For any $\mathcal{F}_{t_n}$-measurable random variable $\upsn$ taking values in $L^2(D)$, if $\tau<\frac{1}{L_\beta}$ then there exists a unique $\mathcal{F}_{t_{n+1}}$-measurable random variable $\upsnp$ with values in $V$ solving \eqref{240911_01}.
\end{prop}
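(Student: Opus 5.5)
The plan is to solve \eqref{240911_01} pathwise, i.e.\ for fixed $\omega$, as a stationary monotone problem, and then to recover measurability from uniqueness. Set $h:=\upsn+G(\upsn)\Delta_{n+1}W+\tau f_n$. This is an $\mathcal{F}_{t_{n+1}}$-measurable random variable with values in $L^2(D)$, since $G(\upsn)\Delta_{n+1}W\in L^2(\Omega;L^2(D))$ by the remark above and $f_n$ is $\mathcal{F}_{t_{n+1}}$-measurable. For fixed $\omega$ we look for $v\in V$ with $A v=h(\omega)$ in $V^*$, where
\[
\langle A v,w\rangle_{V^*,V}:=\int_D (v-\tau\beta(v)+\tau\pe(v))w\,dx+\tau\langle\plap v,w\rangle_{V^*,V}.
\]
Since $p\geq 2$ and $D$ is bounded, $V\hookrightarrow L^2(D)\hookrightarrow V^*$, so the operator $A:V\to V^*$ is well defined.

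\emph{Minty--Browder hypotheses.} First, $\plap$ is monotone, hemicontinuous and bounded, by the standard inequality for $\xi\mapsto|\xi|^{p-2}\xi$. The term $\pe$ is monotone and Lipschitz. The term $v-\tau\beta(v)$ satisfies
\[
\int_D \big(v-w-\tau(\beta(v)-\beta(w))\big)(v-w)\,dx\geq (1-\tau L_\beta)\|v-w\|_{L^2(D)}^2,
\]
so for $\tau<1/L_\beta$ it is strongly monotone in $L^2(D)$. Hence $A$ is strictly monotone and hemicontinuous, since all nonlinearities are continuous with growth at most $p-1$. Coercivity follows from
\[
\langle Av,v\rangle\geq (1-\tau L_\beta)\|v\|_{L^2(D)}^2+\tau\|v\|_V^p,
\]
using $\beta(0)=0=\pe(0)$. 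Minty--Browder then gives a solution $v(\omega)$, and strict monotonicity gives its uniqueness. For $\omega$ in the null set where $h(\omega)\notin L^2(D)$, set $v(\omega)=0$.

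\emph{Measurability (main obstacle).} I would handle this through continuity of the solution map $S:L^2(D)\to V$, $h\mapsto v$. Testing the difference of two equations with $v_1-v_2$ and using monotonicity gives
\[
(1-\tau L_\beta)\|v_1-v_2\|_{L^2(D)}\leq\|h_1-h_2\|_{L^2(D)},
\]
together with
\[
\tau\langle\plap v_1-\plap v_2,v_1-v_2\rangle\leq\|h_1-h_2\|_{L^2(D)}\|v_1-v_2\|_{L^2(D)}.
\]
For $p\geq2$ one has $\langle\plap v_1-\plap v_2,v_1-v_2\rangle\geq c\|v_1-v_2\|_V^p$. This yields Hölder continuity of $S$ from $L^2(D)$ into $V$. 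Therefore $\upsnp=S(h)$ is a continuous function of an $\mathcal{F}_{t_{n+1}}$-measurable random variable, hence $\mathcal{F}_{t_{n+1}}$-measurable with values in $V$, which is separable, so Pettis' theorem applies.

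Finally, the pathwise uniqueness above gives uniqueness of $\upsnp$ up to a $\PP$-null set. The identity \eqref{240911_01} holds in $L^2(D)$ rather than only in $V^*$ because all the other terms lie in $L^2(D)$, so $\plap\upsnp$ does too.
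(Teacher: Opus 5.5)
Your proposal is correct and follows the route the paper indicates. The paper only invokes the Minty--Browder theorem and defers the details to \cite{BSSZ2026}; your pathwise application of it to $v\mapsto v-\tau\beta(v)+\tau\pe(v)+\tau\plap v$, which is strictly monotone and coercive on $V$ because $\tau<1/L_\beta$, is that argument. Your derivation of $\mathcal{F}_{t_{n+1}}$-measurability from the H\"older continuity of the solution map $L^2(D)\to V$ is a clean way to supply the step the paper does not spell out here. That step only needs $G(\upsn)\Delta_{n+1}W$ to be measurable, not in $L^2(\Omega;L^2(D))$, which matters because $\upsn$ is not assumed square integrable.
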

Using the sequence of discrete approximations $(\upsnp)_{0\le n \le N-1}$ introduced by Proposition \ref{well-posedness_scheme}, we are able to construct the following temporal approximations of different types (right-continuous and piecewise-constant in time or right-continuous and piecewise affine in time, or simply time-continuous):
\begin{defn}\label{piecewiseconstantbis}
Let $\eps\in\R_+^\star$, $N\in\na^\star$ and set $\tau=T/N$ and $t_n=n\tau$, for any $n$ in $\{0,\ldots,N\}$. We define on $\Omega\times [0,T]$ the following approximations processes, for any $(\omega, t)\in \Omega\times [0,T]$:
\begin{itemize}
\item[$\bullet$] $\uhnr(\omega,t):= \displaystyle \sum_{n=0}^{N-1}\upsnp(\omega)\,\mathds{1}_{[t_n,t_{n+1})}(t) \ \text{ if } t\in [0,T)$ and $\uhnr(\omega,T):= \ups^N(\omega)$.
\item[$\bullet$] $\uhnl(\omega,t):=\displaystyle \sum_{n=0}^{N-1} \upsn(\omega)\,\mathds{1}_{[t_n,t_{n+1})}(t) \ \text{ if } t\in [0,T)$ and $\uhnl(\omega,T) := \ups^N(\omega)$.
\item[$\bullet$] $\widehat{u}_{N,\eps}(\omega,t) :=\displaystyle \sum_{n=0}^{N-1}\left( \frac{\upsnp(\omega)-\upsn(\omega)}{\tau}(t-t_n)+\upsn(\omega)\right)\mathds{1}_{[t_n,t_{n+1})}(t) \ \text{ if } t\in [0,T)$ and
$\widehat{u}_{N,\eps}(\omega, T) := \ups^N(\omega)$.
\item[$\bullet$] $M_{N,\eps}(\omega,t) :\displaystyle =\int_0^t G(\uhnl(\omega,s))\,dW(s)$.
\item[$\bullet$] $\widehat{M}_{N,\eps}(\omega,t) :=\displaystyle \sum_{n=0}^{N-1} \left(\frac{M^{n+1}_\eps(\omega)-M^n_\eps(\omega)}{\tau}(t-t_n)+M^n_\eps(\omega)\right)\mathds{1}_{[t_n,t_{n+1})}(t) \ \text{ if } t\in [0,T)$ and $\widehat{M}_{N,\eps}(\omega, T) := M^N_\eps(\omega),$ where for any $n\in \{0,\ldots,N\}$, $M^n_\eps(\omega)=M_{N,\eps}(\omega,t_n)$.
\item[$\bullet$] $f^N(\omega, t):=\displaystyle \sum_{n=0}^{N-1} \frac{1}{\tau} \int_{t_n}^{t_{n+1}} f(\omega,s)\,ds\,\mathds{1}_{[t_n,t_{n+1})}(t) \ \text{ if } t\in [0,T)$ and $f^N(\omega,T):=f(\omega,T)$.
\end{itemize}
\end{defn}
In order to relieve the presentation of the paper, we will omit most of the time the random variable $\omega$.
\section{Stability estimates}\label{estimates}
In this section, we will firstly obtain uniform bounds with respect to $\eps$ and $N$ for the sequence of discrete approximations $(\upsnp)_{0\le n \le N-1}$ given by Proposition \ref{well-posedness_scheme}. Secondly, these bounds will allow us to derive stability estimates
satisfied by the following sequences of discrete approximations (up to some technical conditions on the indexes $N$ and $\eps$)constructed according to Definition \ref{piecewiseconstantbis}:
\begin{equation}\label{040526_a}
(v_{N,\eps})_{N,\eps}, \ \left(\rho_{\eps}(v_{N,\eps})\right)_{N,\eps}, \ \left(M_{N,\eps}-\widehat{M}_{N,\eps}\right)_{N,\eps} \text{ and } \left(\partial_t(\widehat{u}_{N,\eps}-\widehat M_{N,\eps})\right)_{N,\eps},
\end{equation}
where for any
$(N,\eps)\in\na^\star\times\R_+^\star$, with $N$ sufficiently large,
$v_{N,\eps}\in\{\uhnr,\uhnl\}$ and $\rho_{\eps}\in\{\pe,\beta,G,-\frac{(\cdot)^-}{\eps},\frac{(\cdot-1)^+}{\eps}\}$.
Before stating and proving all these boundedness results, let us begin with a convergence property satisfied by the time-approximation of the additive source term $f$:
\begin{lem}\label{CVfhnl}
The sequence of piecewise constant processes $(f^N)_N$ converges to $f$ strongly in $L^2(\Omega;L^2(0,T;L^2(D)))$ as $N\to\infty$.
\end{lem}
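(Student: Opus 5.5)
The plan is to view $f\mapsto f^N$ as a linear operator on the Bochner space $H:=L^2(\Omega;L^2(0,T;L^2(D)))\cong L^2(\Omega_T;L^2(D))$. Define $P_N g:=\sum_{n=0}^{N-1}\left(\frac1\tau\int_{t_n}^{t_{n+1}}g(s)\,ds\right)\mathds{1}_{[t_n,t_{n+1})}$. The value at the single point $t=T$ is irrelevant, since it is a null set in time. We then argue by a density argument: uniform boundedness of $P_N$ plus convergence on a dense subset.

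\textbf{Step 1: contraction.} For every $\omega$ and every $n$, Jensen's inequality (or Cauchy--Schwarz) in the Bochner integral gives
\[
\Big\|\tfrac1\tau\int_{t_n}^{t_{n+1}}g(s)\,ds\Big\|_{L^2(D)}^2\le \tfrac1\tau\int_{t_n}^{t_{n+1}}\|g(s)\|_{L^2(D)}^2\,ds .
\]
Multiplying by $\tau$, summing over $n$ and taking expectations yields $\|P_Ng\|_H\le\|g\|_H$ for all $g\in H$ and all $N$. Measurability in $\omega$ of $P_Ng$ is clear, since $g\mapsto\int_{t_n}^{t_{n+1}}g\,ds$ is continuous from $L^2(0,T;L^2(D))$ to $L^2(D)$.

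\textbf{Step 2: dense class.} Take $\mathcal{D}$ to be the linear span of functions $\mathds{1}_A(\omega)\,\varphi(t)\,h(x)$ with $A\in\mathcal F$, $\varphi\in\mathscr C([0,T])$ and $h\in L^2(D)$. This set is dense in $H$, because simple functions are dense in Bochner spaces and continuous functions are dense in $L^2(0,T)$. For such an element, $P_N$ acts only on $\varphi$. Uniform continuity of $\varphi$ gives $\sup_t|(P_N\varphi)(t)-\varphi(t)|\le\omega_\varphi(\tau)\to0$, where $\omega_\varphi$ is the modulus of continuity of $\varphi$. Hence $P_Ng\to g$ in $H$ for every $g\in\mathcal D$, by linearity.

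\textbf{Step 3: conclusion.} Given $\delta>0$, pick $g\in\mathcal D$ with $\|f-g\|_H<\delta$. Then
\[
\|f^N-f\|_H\le\|P_N(f-g)\|_H+\|P_Ng-g\|_H+\|g-f\|_H\le 2\delta+\|P_Ng-g\|_H,
\]
so $\limsup_N\|f^N-f\|_H\le2\delta$. Since $\delta$ is arbitrary, $f^N\to f$ in $H$.

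No step is a real obstacle. The only point needing care is Step 2: one must make sure the chosen class is dense in $L^2(\Omega;L^2(0,T;L^2(D)))$. An alternative is to use, $\omega$ by $\omega$, the classical convergence of averaging projections in $L^2(0,T;L^2(D))$ combined with dominated convergence in $\omega$. The domination $\|f^N-f\|^2\le4\|f\|^2_{L^2(0,T;L^2(D))}$ follows from Step 1, and this route avoids the density argument in $\Omega$ altogether.
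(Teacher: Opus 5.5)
Your proof is correct, but your main route differs from the paper's. The paper's argument is the alternative you sketch at the end. It works $\omega$ by $\omega$: the Steklov-average lemma gives $f^N(\omega)\to f(\omega)$ in $L^2(0,T;L^2(D))$ for a.e.\ $\omega$, and Lebesgue's dominated convergence theorem for Bochner spaces then concludes. The dominating function $4\|f(\omega)\|^2_{L^2(0,T;L^2(D))}$, which the paper leaves implicit, comes from the same Jensen contraction as your Step 1.

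Your main argument instead runs the contraction-plus-dense-class argument directly on the whole space $L^2(\Omega;L^2(0,T;L^2(D)))$. This makes the proof self-contained: it needs neither the cited Steklov lemma nor a separate dominated convergence step. The price is checking that the span of the elements $\mathds{1}_A\varphi h$ is dense in the full Bochner space. You justify this correctly: first simple functions in $\omega$ with values in $L^2(0,T;L^2(D))$, then tensors with continuous $\varphi$, using $\|\mathds{1}_A v\|\le\|v\|$.

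The paper's route is shorter given its citations, and it reflects that the averaging acts only in time. Yours is in effect the proof of the Steklov lemma itself, carried out uniformly in $\omega$. Note that your alternative route still hides a density argument in the time variable inside the Steklov lemma; it only avoids density in $\Omega$, as you say.

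A cosmetic point: the modulus-of-continuity notation $\omega_\varphi$ clashes with $\omega\in\Omega$.
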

\begin{proof}
We obtain the result by combining a standard argument of Steklov average (see, \textit{e.g.,} \cite[Lemma 2.1.2]{Wu2001}) with Lebesgue's dominated convergence theorem for Bochner spaces (see, \textit{e.g.,} \cite[Theorem 1.3.3]{D}).
\end{proof}
Let us now address uniform bounds with respect to $\eps$ and $N$ for the sequence of discrete approximations $(\upsnp)_{0\le n \le N-1}$ given by Proposition \ref{well-posedness_scheme}:
\begin{prop}\label{bounds}
There exists a constant $K_0\in\R_+^\star$ that depends only on $L_{G,2}$, $L_\beta$, $f$, $u_0$ and $T$, such that for any $\eps\in\R_+^\star$ and any $N\in\na^\star$, if $\frac{T}{N} \leq \frac{3}{4} \frac{1}{1 + 2 L_\beta}$, then
\begin{align*}
&\erwb \|\upsn\|_{L^2(D)}^2 \erwe+\sum_{k=0}^{n-1}\erwb\|\upskp-\upsk\|_{L^2(D)}^2\erwe+\tau\sum_{k=0}^{n-1}\erww{\| \upskp\|_V^p}\leq K_0,\; \forall n\in \{1,\ldots,N\},
\end{align*}
where $\erw$ denotes the expectation \textit{i.e.} the integral with respect to the probability measure $\mathds{P}$, over $\Omega$.
\end{prop}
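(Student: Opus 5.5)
Test the scheme \eqref{240911_01} with $\upskp$ in $L^2(D)$ (legitimate since $\upskp\in V\subset L^2(D)$ and all terms lie in $L^2(D)$ \Pas), for $k\in\{0,\ldots,n-1\}$, and sum over $k$. Three ingredients are needed:
\begin{itemize}
\item[$\bullet$] the identity $(a-b,a)=\frac12\|a\|^2-\frac12\|b\|^2+\frac12\|a-b\|^2$ in $L^2(D)$;
\item[$\bullet$] the identity $\langle\plap\upskp,\upskp\rangle_{V^*,V}=\|\nabla\upskp\|_{L^p}^p+\|\upskp\|_{L^p}^p$, which is comparable to $\|\upskp\|_V^p$ up to a constant depending only on $p$;
\item[$\bullet$] the monotonicity of $\pe$ together with $\pe(0)=0$, which gives $(\pe(\upskp),\upskp)\ge 0$, so this term can be dropped without any dependence on $\eps$.
\end{itemize}
This yields, \Pas,
\begin{align*}
\tfrac12\|\upskp\|^2-\tfrac12\|\upsk\|^2+\tfrac12\|\upskp-\upsk\|^2+c_p\tau\|\upskp\|_V^p\le (G(\upsk)\Delta_{k+1}W,\upskp)+\tau(\beta(\upskp)+f_k,\upskp).
\end{align*}

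The stochastic term is handled by splitting $(G(\upsk)\Delta_{k+1}W,\upskp)=(G(\upsk)\Delta_{k+1}W,\upsk)+(G(\upsk)\Delta_{k+1}W,\upskp-\upsk)$. The first piece has zero expectation, because $\upsk$ is $\mathcal{F}_{t_k}$-measurable and the increment is independent of $\mathcal{F}_{t_k}$. For the second piece, Young's inequality bounds it by $\frac14\|\upskp-\upsk\|^2+\|G(\upsk)\Delta_{k+1}W\|^2$. Itô's isometry gives $\erww{\|G(\upsk)\Delta_{k+1}W\|^2}=\tau\erww{\|G(\upsk)\|_{\HS}^2}$, and since $G(0)$ need not vanish we write $\|G(\upsk)\|_{\HS}^2\le 2L_{G,2}^2\|\upsk\|^2+2\|G(0)\|_{\HS}^2$ by Lemma \ref{lemma_G1_G3}. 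Here $\|G(0)\|_{\HS}^2=\sum_j\|g_j(0)\|^2_{L^2(D)}$, and for $j$ large this is controlled via $(H_6)$ and $|g_j(0)|\le\|g_j'\|_\infty$, using $g_j(1)=0$ from the support condition in $(H_4)$; the finitely many remaining $j$ are harmless. This constant is finite and depends only on $L_{G,2}$ up to $|D|$.

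The deterministic term is bounded by $\tau(L_\beta+\frac12)\|\upskp\|^2+\frac{\tau}{2}\|f_k\|^2$. By Jensen, $\tau\sum_k\|f_k\|^2\le\|f\|^2_{L^2(0,T;L^2(D))}$.

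Taking expectations and summing, the right-hand side contains $\tau(2L_\beta+1)\erww{\|\upsn\|^2}$, which must be absorbed into the left-hand side. The hypothesis $\tau(1+2L_\beta)\le\frac34$ is exactly what is needed for this: after multiplying by $2$, the coefficient of $\erww{\|\upsn\|^2}$ on the left stays at least $\frac14$. The remaining terms have the form $C\tau\sum_{k<n}\erww{\|\upsk\|^2}$ with $C$ depending on $L_\beta$ and $L_{G,2}$, plus a constant built from $\erww{\|u_0\|^2}$, $f$, $\|G(0)\|_{\HS}$ and $T$.

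A discrete Gronwall lemma then gives $\sup_n\erww{\|\upsn\|^2}\le K$ uniformly in $\eps$ and $N$. Feeding this bound back into the summed inequality controls both $\sum_k\erww{\|\upskp-\upsk\|^2}$ (which retains coefficient $\frac14$ after the Young step) and $\tau\sum_k\erww{\|\upskp\|_V^p}$.

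The only delicate points are the integrability that justifies taking expectations and the martingale cancellation. One needs $\upskp\in L^2(\Omega;L^2(D))$ a priori, and I would obtain this inductively from the same energy inequality, truncating if necessary. The main care otherwise lies in tracking the absorption constant so that it matches the stated time-step condition.
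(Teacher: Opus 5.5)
Your proposal is correct and follows essentially the same route as the paper:
\begin{itemize}
\item testing with $u_{\eps}^{k+1}$;
\item dropping the $\psi_{\eps}$-term by monotonicity;
\item martingale cancellation plus Young's inequality and It\^o's isometry for the noise;
\item absorbing $\tau(2L_\beta+1)\,\mathbb{E}\|u_{\eps}^n\|_{L^2(D)}^2$ via $\tau(1+2L_\beta)\le\frac34$, then applying the discrete Gr\"onwall lemma.
\end{itemize}
The only differences are that you sum before absorbing rather than step by step, and that you make explicit the inductive integrability $u_{\eps}^{k+1}\in L^2(\Omega;L^2(D))$, which the paper leaves implicit.

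One correction: $G(0)$ does vanish. Each $g_j$ is continuous with $\operatorname{supp}(g_j)\subset[0,1]$, and this forces $g_j(0)=0$ exactly as it forces $g_j(1)=0$. Hence $\|G(u_{\eps}^k)\|_{\HS}^2\le L_{G,2}^2\|u_{\eps}^k\|_{L^2(D)}^2$ holds directly, and your extra $\|G(0)\|_{\HS}$ term, with its $|D|$-dependence, is unnecessary though harmless.
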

\begin{proof}
Set $\eps\in\R_+^\star$, $N\in\na^\star$, assume that $\tau=\frac{T}{N}\leq \frac{3}{4} \frac{1}{1 + 2 L_\beta}$ and fix $n\in \{1,\ldots,N\}$. For any $k\in \{0,\ldots,n-1\}$, multiply the scheme \eqref{240911_01} by $\upskp$, take the integral over $D$ and the expectation to obtain
\begin{align}\label{implicitscheme}
&\erww{(\upskp-\upsk,\upskp)_{L^2(D)}}
+\tau \erww{(\plap \upskp, \upskp)_{L^2(D)}}+\tau \erwb \left(\pe(\upskp),\upskp\right)_{L^2(D)}\erwe\nonumber\\
=\,&\erww{(G(\upsk)\Delta_{k+1}W, \upskp)_{L^2(D)}}+\tau \erww{\left(\beta(\upskp)+f_k,\upskp\right)_{L^2(D)}},
\end{align}
where $(\cdot,\cdot)_{L^2(D)}$ denotes the canonical inner product in $L^2(D)$.
We consider the terms of \eqref{implicitscheme} separately. Firstly note that
\begin{align}\label{term1}
\erwb(\upskp-\upsk,\upskp)_{L^2(D)}\erwe=\frac{1}{2}\erwb\|\upskp\|_{L^2(D)}^2-\|\upsk\|_{L^2(D)}^2+\|\upskp-\upsk\|_{L^2(D)}^2\erwe.
\end{align}
Secondly, due to the continuous injection of $L^2(D)$ into $V^*$, we will use repeatedly throughout the paper the identification $(u,v)_{L^2(D)}=\left\langle u, v \right\rangle_{V^*,V}$, for any $u\in L^2(D)$, $v\in V$, which allows us to write that
\begin{align}\label{term2}
&(\plap \upskp, \upskp)_{L^2(D)}
=\langle\plap \upskp, \upskp\rangle_{V^*,V} \nonumber\\
&= \int_D \left( |\nabla \upskp(x)|^{p-2} \nabla \upskp(x) \cdot \nabla \upskp(x)
+ |\upskp(x)|^{p-2} \upskp(x) \upskp(x) \right)\,dx\nonumber\\
&= \int_D \left(|\nabla \upskp(x)|^{p} + |\upskp(x)|^p \right)\,dx
= \|\upskp\|_V^p.
\end{align}
Thirdly, as $\pe(0)=0$ and $\pe$ is monotone,
\begin{align}\label{term3}
\tau \erww{(\pe(\upskp),\upskp)_{L^2(D)}}\geq 0.
\end{align}
Fourthly, since $G(\upsk)$ is $\mathcal{F}_{t_k}$-measurable and $\Delta_{k+1}W$ is $\mathcal{F}_{t_k}$-independent, one obtains
\begin{align*}
\erww{(G(\upsk)\Delta_{k+1}W, \upsk)_{L^2(D)}}=0
\end{align*}
and so, by Young's inequality, one arrives at
\begin{align*}
\erww{(G(\upsk)\Delta_{k+1}W, \upskp)_{L^2(D)}}&=\erww{(G(\upsk)\Delta_{k+1}W, \upskp-\upsk)_{L^2(D)}}\\
&\leq\erww{\|G(\upsk)\Delta_{k+1}W\|_{L^2(D)}^2}+\frac{1}{4}\erww{\|\upskp-\upsk\|_{L^2(D)}^2}.
\end{align*}
Using \cite[Proposition 4.20 and Remark 4.21 p. 97]{DPZ14} and Inequality \eqref{G3_inequality_G_q2}, one has
\begin{align*}
\erww{\|G(\upsk)\Delta_{k+1}W\|_{L^2(D)}^2}
&=\erww{\left\| \int_{t_k}^{t_{k+1}}G(\upsk)\,dW(t)\right\|_{L^2(D)}^2}\\
&=\erww{\int_{t_k}^{t_{k+1}}\| G(\upsk)\|_{\HS}^2\,dt}\\
&=\tau\erww{\| G(\upsk)\|_{\HS}^2}\\
&\leq \tau L_{G,2}^2 \erww{\| \upsk\|_{L^2(D)}^2},
\end{align*}
and then
\begin{align}\label{term4}
\erww{(G(\upsk)\Delta_{k+1}W, \upskp)_{L^2(D)}}&\leq \tau L_{G,2}^2 \erww{\| \upsk\|_{L^2(D)}^2}+\frac{1}{4}\erww{\|\upskp-\upsk\|_{L^2(D)}^2}.
\end{align}
Fifthly, using the Lipschitz continuity of $\beta$ with $\beta(0)=0$, the following holds
\begin{align}\label{term5}
\erww{(\beta(\upskp),\upskp)_{L^2(D)}}\leq L_\beta\erww{\|\upskp\|_{L^2(D)}^2}.
\end{align}
Finally, by Young's inequality,
\begin{align}\label{term6}
\erww{(f_k,\upskp)_{L^2(D)}}
\leq\,& \frac{1}{2}\erww{\| f_k\|_{L^2(D)}^2}
+\frac{1}{2} \erww{\| \upskp\|_{L^2(D)}^2}.
\end{align}
\noindent Combining \eqref{term1},\eqref{term2},\eqref{term3},\eqref{term4}, \eqref{term5} and \eqref{term6}, one gets
\begin{align}\label{before_discarding}
&\frac{1}{2} \erwb\|\upskp\|_{L^2(D)}^2-\|\upsk\|_{L^2(D)}^2+\|\upskp-\upsk\|_{L^2(D)}^2\erwe
+\tau \erww{\|\upskp\|_V^p}\nonumber\\
\leq\,&
\tau L_{G,2}^2 \erww{\| \upsk\|_{L^2(D)}^2}
+\frac{1}{4} \erww{\|\upskp-\upsk\|_{L^2(D)}^2}
+\tau L_\beta \erww{\|\upskp\|_{L^2(D)}^2}\nonumber\\
&+\frac{\tau}{2} \erww{\| f_k\|_{L^2(D)}^2}
+\frac{\tau}{2} \erww{\| \upskp\|_{L^2(D)}^2}.
\end{align}
We discard the nonnegative term $\tau \erww{\|\upskp\|_V^p}$, multiply the above equation by $2$, set $\delta = 1-\tau(2 L_\beta+1)$ and arrive at
\begin{align*}
&\delta \erwb\|\upskp\|_{L^2(D)}^2\erwe
-\erwb\|\upsk\|_{L^2(D)}^2\erwe
+ \frac{1}{2} \erwb\|\upskp-\upsk\|_{L^2(D)}^2\erwe\\
\leq\ & 2\tau L_{G,2}^2 \erww{\| \upsk\|_{L^2(D)}^2}+\tau\erww{\| f_k\|_{L^2(D)}^2}.
\end{align*}
Adding $\tau(2 L_\beta+1)\erwb\|\upsk\|_{L^2(D)}^2\erwe$ on both sides of the inequality, and discarding the nonnegative term $\frac{1}{2} \erwb\|\upskp-\upsk\|_{L^2(D)}^2\erwe$ we get
\begin{align*}
&\delta \erwb\|\upskp\|_{L^2(D)}^2-\|\upsk\|_{L^2(D)}^2\erwe
\leq\tau (2 L_{G,2}^2 + 2 L_\beta + 1) \erww{\|\upsk\|_{L^2(D)}^2}
+\tau\erww{\| f_k\|_{L^2(D)}^2}.
\end{align*}
After summing over $k\in\{0,\ldots,n-1\}$ and recalling that $\upsz=u_0$ one arrives at
\begin{align*}
\delta \erwb\|\upsn\|_{L^2(D)}^2\erwe\leq 
\delta \erwb\|u_0\|_{L^2(D)}^2\erwe
+\tau (2 L_{G,2}^2 + 2 L_\beta + 1) \sum_{k=0}^{n-1} \erww{\|\upsk\|_{L^2(D)}^2}
+\|f\|_{L^2(\Omega_T;L^2(D))}^2.
\end{align*}
By assumption, $\tau \leq \frac{3}{4} \frac{1}{1 + 2 L_\beta}$ so $\frac{1}{4} \leq \delta \leq 1$, and using this we arrive at the following inequality
\begin{align*}
\erwb\|\upsn\|_{L^2(D)}^2\erwe
\leq&\,
4 \left(
\erwb\|u_0\|_{L^2(D)}^2\erwe
+\| f \|_{L^2(\Omega_T;L^2(D))}^2
\right)
\\
&
+4 \tau (2 L_{G,2}^2 + 2 L_\beta + 1) \sum_{k=0}^{n-1} \erww{\| \upsk\|_{L^2(D)}^2}.
\end{align*}
Applying the discrete Grönwall lemma (see special cases in \cite[Corollary 1]{Beesack1985}) yields for all $n \in \{1,\ldots,N\}$
\begin{align}\label{uhnbound}
&\erwb\|\upsn\|_{L^2(D)}^2\erwe
\leq\,
4 \left(
\erwb\|\upsz\|_{L^2(D)}^2\erwe
+\| f \|_{L^2(\Omega_T;L^2(D))}^2
\right)
e^{4 T (2 L_{G,2}^2 + 2 L_\beta + 1)}.
\end{align}
Since the initial condition $\upsz = u_0$ is in $L^2(\Omega;L^2(D))$ by $(H_1)$, and using \eqref{uhnbound}, there exists a constant $\Upsilon>0$ such satisfying
\begin{align}\label{uhnboundbis}
\sup_{n\in\{0,\ldots,N\}} \erwb\|\upsn\|_{L^2(D)}^2\erwe \leq \Upsilon.
\end{align}
Going back to \eqref{before_discarding}, summing over $k\in \{0,\ldots,n-1\}$ and using \eqref{uhnboundbis}, we arrive at
\begin{align*}
&\erwb\|\upsn\|_{L^2(D)}^2\erwe
+\frac{1}{2} \sum_{k=0}^{n-1}\erwb\|\upskp-\upsk\|_{L^2(D)}^2\erwe
+2 \tau \sum_{k=0}^{n-1} \erww{\|\upskp\|_V^p}\\
\leq\,&
 \| u_0 \|_{L^2(\Omega;L^2(D))}^2
+ T \Upsilon (2 L_{G,2}^2 + L_\beta + 1)
+ \| f \|_{L^2(\Omega_T;L^2(D))}^2,
\end{align*}
and the announced result holds.
\end{proof}
Thanks to Proposition \ref{bounds}, we are now able to derive stability estimates satisfied by the sequences listed in \eqref{040526_a}.
\begin{lem}\label{210611_lem01}
For $N$ sufficiently large, we have the following boundedness results independently of the discretization and regularization parameters $N\in\na^\star$ and $\eps\in\R_+^\star$:
\begin{itemize}
\item The sequences $(\uhnr)_{N,\eps}$ and $(\uhnl)_{N,\eps}$ are bounded in $L^\infty(0,T;L^2(\Omega;L^2(D)))$.
\item The sequence $(\uhnr)_{N,\eps}$ is bounded in $L^p(\Omega;L^p(0,T;V))$.
\item The sequence $(\uhnl)_{N,\eps}$ is bounded in $L^2_{\mathcal{P}_T}\big(\Omega_T;L^2(D)\big)$.
\end{itemize}
\end{lem}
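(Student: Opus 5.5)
The plan is to read off all three bounds from Proposition \ref{bounds}, applied with $N$ large enough that $\tau=T/N\leq \frac{3}{4}\frac{1}{1+2L_\beta}$. This is what ``$N$ sufficiently large'' means in the statement. The constant $K_0$ there depends only on $L_{G,2}$, $L_\beta$, $f$, $u_0$ and $T$, so every bound obtained below is uniform in $N$ and $\eps$.

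\emph{First item.} Fix $t\in[0,T)$ and let $n$ be such that $t\in[t_n,t_{n+1})$. Then
\begin{align*}
\uhnr(t)=\upsnp, \qquad \uhnl(t)=\upsn .
\end{align*}
Proposition \ref{bounds} gives $\erww{\|\ups^m\|_{L^2(D)}^2}\leq K_0$ for every $m\in\{1,\ldots,N\}$. For $m=0$ we have $\ups^0=u_0$, and $\erww{\|u_0\|_{L^2(D)}^2}<\infty$ by $(H_1)$. At $t=T$ both processes equal $\ups^N$. Hence
\begin{align*}
\sup_{t\in[0,T]}\erww{\|\uhnr(t)\|_{L^2(D)}^2}+\sup_{t\in[0,T]}\erww{\|\uhnl(t)\|_{L^2(D)}^2}\leq 2\max\left(K_0,\erww{\|u_0\|_{L^2(D)}^2}\right).
\end{align*}
This is the claimed $L^\infty(0,T;L^2(\Omega;L^2(D)))$ bound. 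Strictly speaking, one should first check that $t\mapsto\uhnr(t)$ is a (strongly measurable) step function with values in $L^2(\Omega;L^2(D))$, which is clear since it takes finitely many values.

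\emph{Second item.} Since $\uhnr$ is piecewise constant in time,
\begin{align*}
\erww{\int_0^T\|\uhnr(t)\|_V^p\,dt}=\tau\sum_{k=0}^{N-1}\erww{\|\upskp\|_V^p}\leq K_0,
\end{align*}
by Proposition \ref{bounds} with $n=N$.

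\emph{Third item.} Integrating the bound of the first item over $(0,T)$ gives $\|\uhnl\|_{L^2(\Omega_T;L^2(D))}^2\leq T\max\left(K_0,\erww{\|u_0\|_{L^2(D)}^2}\right)$. It remains to prove predictability, which is the only non-routine point. Write
\begin{align*}
\uhnl=\sum_{n=0}^{N-1}\upsn\,\mathds{1}_{[t_n,t_{n+1})}+\ups^N\mathds{1}_{\{T\}},
\end{align*}
where each $\upsn$ is $\mathcal{F}_{t_n}$-measurable with values in $L^2(D)$ (Proposition \ref{well-posedness_scheme}). The processes $\upsn\mathds{1}_{(t_n,t_{n+1}]}$ are elementary predictable, because $\upsn$ is measurable at the left endpoint. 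The process $\uhnl$ differs from $\sum_n\upsn\mathds{1}_{(t_n,t_{n+1}]}$ only on the finite set of times $\{t_0,\ldots,t_N\}$, which is $dt$-negligible. So $\uhnl$ coincides $dt\otimes d\PP$-a.e. with a $\mathcal{P}_T$-measurable process, and it therefore defines an element of $L^2_{\mathcal{P}_T}(\Omega_T;L^2(D))$. If a genuinely predictable representative is needed, one could instead approximate $\upsn\mathds{1}_{[t_n,t_{n+1})}$ by left-closed intervals shrunk slightly. I expect the standard a.e.-identification to suffice, since elements of $L^2_{\mathcal{P}_T}$ are equivalence classes. This measurability bookkeeping is the step I expect to need care; the norm estimates themselves are immediate.
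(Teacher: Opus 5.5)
Your proposal is correct and follows the paper's proof: all three bounds are read off from Proposition \ref{bounds}. The paper uses the constant $\Upsilon$ of \eqref{uhnboundbis}, which already covers $n=0$, whereas you use $K_0$ together with $\erww{\|u_0\|_{L^2(D)}^2}$. For predictability, the paper simply asserts that $\uhnl$ is an adapted elementary process. Your a.e.\ identification with $\sum_n \upsn\mathds{1}_{(t_n,t_{n+1}]}$ is a more careful treatment of the same point and suffices, because predictability of $\uhnl$ is only used through stochastic integrals and weak limits in $L^2_{\mathcal{P}_T}$, and neither sees $dt\otimes d\PP$-null modifications.
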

\begin{proof}
We note that by \eqref{uhnboundbis}, there exists a constant $\Upsilon>0$ such that for any $N\in \na^\star$ sufficiently large and any $\eps\in\R_+^\star$
\begin{align*}
\| \uhnr \|_{L^{\infty}(0,T;L^2(\Omega;L^2(D)))} + \| \uhnl \|_{L^{\infty}(0,T;L^2(\Omega;L^2(D)))}
& \leq 2 \sup_{n\in \{0,\ldots,N\}} \erwb\|\upsn\|_{L^2(D)}^2\erwe
\leq 2 \Upsilon.
\end{align*}
The sequence $(\uhnr)_{N,\eps}$ is bounded in $L^p(\Omega;L^p(0,T;V))$ by $K_0^{\frac{1}{p}}$, where the constant $K_0$ is given by Proposition \ref{bounds}.
As a consequence of the $\mathcal{F}_{t_n}$-measurability of the random variable $u^n_{\eps}$ for all $n\in\{0,\ldots,N\}$, the process $(\uhnl)_{N,\eps}$ is predictable with values in $L^2(D)$. This is due to the construction of $(\uhnl)_{N,\eps}$ as an elementary process adapted to the filtration $(\mathcal{F}_t)_{t\geq 0}$, making it predictable.
\end{proof}
\begin{rem}\label{251002_r1}
By Proposition~\ref{bounds}, one gets the following useful estimates for any $(N,\eps)\in \na^\star\times \R_+^{\star}$ with $N$ sufficiently large:
\begin{align}
\|\uhnr-\uhnl\|_{L^2(\Omega;L^2(0,T;L^2(D)))}^2=\tau\erww{\sum_{n=0}^{N-1}\|u_\eps^{n+1}-u_\eps^n\|_{L^2(D)}^2}
\leq\ & K_0\tau,\label{210824_05}\\
\text{and }\|\pe(\uhnr)-\pe(\uhnl)\|_{L^2(\Omega;L^2(0,T;L^2(D)))}^2 \leq\ & \frac{\tau}{\eps^2}K_0. \label{controldiffpe(uhnl)pe(uhnr)}
\end{align}
\end{rem}
\begin{lem}\label{boundguhnlr_and_betauhnlr}
The sequences $(G(\uhnr))_{N,\eps}$ and $(G(\uhnl))_{N,\eps}$ are bounded in \linebreak $L^2(\Omega;L^2(0,T;\HS)$, the sequences $(\beta(\uhnr))_{N,\eps}$, and $(\beta(\uhnl))_{N,\eps}$ are bounded in $L^2(\Omega;L^2(0,T;L^2(D)))$, independently of the discretization and regularization parameters $N\in\na^\star$ and $\eps\in\R_+^\star$, for $N$ sufficiently large. Moreover, for any $(N,\eps)\in\na^\star\times\R_+^\star$, $G(\uhnl)$ and $\beta(\uhnl)$ are predictable processes with values in $\HS$ and $L^2(D)$, respectively.
\end{lem}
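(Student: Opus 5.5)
The plan is to reduce all four bounds to the bounds on $\uhnr$ and $\uhnl$ from Lemma \ref{210611_lem01}, using pointwise Lipschitz estimates, and then to read off predictability from the elementary structure of $\uhnl$.

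\textbf{Boundedness of $G$.} By $(H_4)$ and \eqref{G3_inequality_G_q2} with $w=0$, we have $G(0)=0$, since each $g_j$ vanishes at $0$. Indeed $g_j$ is continuous with support in $[0,1]$, and $g_j(0)=0$ follows by continuity from $g_j\equiv 0$ on $(-\infty,0)$. Hence, for any $v\in L^2(D)$,
\begin{align*}
\|G(v)\|_{\HS}^2\leq L_{G,2}^2\|v\|_{L^2(D)}^2 .
\end{align*}
Apply this with $v=\uhnr(\omega,t)$ and with $v=\uhnl(\omega,t)$, then integrate over $\Omega_T$. The resulting right-hand side is at most $T L_{G,2}^2$ times the $L^\infty(0,T;L^2(\Omega;L^2(D)))$ bound of Lemma \ref{210611_lem01}. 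Explicitly, it is bounded by $T L_{G,2}^2\Upsilon$ from \eqref{uhnboundbis}, uniformly in $\eps$ and in $N$ large enough that $\tau\leq \frac34\frac{1}{1+2L_\beta}$.

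\textbf{Boundedness of $\beta$.} By $(H_2)$, $|\beta(r)|\leq L_\beta|r|$, so $\|\beta(v)\|_{L^2(D)}\leq L_\beta\|v\|_{L^2(D)}$. The same integration over $\Omega_T$ yields a bound of $T L_\beta^2\Upsilon$ for both $\beta(\uhnr)$ and $\beta(\uhnl)$.

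\textbf{Predictability.} The only point needing care is measurability. Both $G:L^2(D)\to\HS$ and $\beta$ are continuous, the latter as a Nemytskii operator $L^2(D)\to L^2(D)$ since $\beta$ is Lipschitz. A composition of a continuous map with a $\mathcal{P}_T$-measurable map is $\mathcal{P}_T$-measurable. Lemma \ref{210611_lem01} gives that $\uhnl$ is predictable with values in $L^2(D)$, so $G(\uhnl)$ and $\beta(\uhnl)$ are predictable with values in the separable spaces $\HS$ and $L^2(D)$.

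One can also see this directly. Each $G(\upsn)\mathds{1}_{[t_n,t_{n+1})}$ is an elementary process with $\mathcal{F}_{t_n}$-measurable coefficient, so $G(\uhnl)$ is predictable for the same reason $\uhnl$ is. The value at the single time point $T$ is irrelevant in $L^2(\Omega_T)$.

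I expect no real obstacle. The only subtle step is checking $g_j(0)=0$, which ensures $G(0)=0$ and hence the linear growth of $G$.
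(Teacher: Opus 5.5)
Your proof is correct and follows the paper's argument: you use the linear-growth bounds $\|G(v)\|_{\HS}^2\leq L_{G,2}^2\|v\|_{L^2(D)}^2$ and $|\beta(r)|\leq L_\beta|r|$ together with the uniform bound \eqref{uhnboundbis}, and you deduce predictability from that of $\uhnl$. Your explicit check that $g_j(0)=0$, hence $G(0)=0$, spells out a step the paper uses only implicitly when it applies \eqref{G3_inequality_G_q2} with $w=0$.
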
 
\begin{proof} 
Thanks to \eqref{uhnboundbis}, one gets that for all $n\in\{1,\ldots N\}$
\begin{align*}
\tau \sum_{k=0}^{n-1} \erww{\|G(\upsk)\|_{\HS}^2}
\leq L_{G,2}^2 \tau \sum_{k=0}^{n-1} \erwb\|\upsk\|_{L^2(D)}^2\erwe
\leq L_{G,2}^2 T \Upsilon.
\end{align*}
The boundedness of $(G(\uhnr))_{N,\eps}$ and $(G(\uhnl))_{N,\eps}$ in $L^2(\Omega;L^2(0,T;\HS)$ results directly. The boundedness in $L^2(\Omega;L^2(0,T;L^2(D)))$ of $(\beta(\uhnr))_{N,\eps}$, and $(\beta(\uhnl))_{N,\eps}$ is a consequence of the boundedness of the sequences $(\uhnr)_{N,\eps}$ and $(\uhnl)_{N,\eps}$ in the same space (given by Proposition \ref{bounds}), combined with the Lipschitz continuity of $\beta$. At last, for any fixed $(N,\eps) \in \na^\star \times \R_+^\star$, the predictability of $G(\uhnl)$ and $\beta(\uhnl)$ results from the predictability of $\uhnl$.
\end{proof}
\begin{prop}\label{boundpe}
Given the constant $r\in[1,2)$ of $(H_6)$, if we assume that there exists $\theta\in(0,\frac{4}{r}-2]$ such that $\frac{T}{N}=\mathcal{O}(\eps^{2+\theta})$, then there exists a constant $K_1 \geq 0$ not depending on the discretization and regularization parameters $N\in\na^\star$ and $\eps\in\R_+^\star$ such that, for $N$ sufficiently large
\begin{equation*}
\sum_{k=0}^{N-1}\tau\erww{\|\pe(\upskp)\|_{L^2(D)}^2}\leq K_1.
\end{equation*}
\end{prop}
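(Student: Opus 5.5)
Test the scheme \eqref{240911_01} with $\pe(\upskp)$, integrate over $D$ and take expectations. Since $\pe$ is nondecreasing and Lipschitz with $\pe'\in\{0,1/\eps\}$, we have $\pe(\upskp)\in V$ and
\begin{align*}
\langle\plap\upskp,\pe(\upskp)\rangle_{V^*,V}=\int_D|\nabla\upskp|^p\pe'(\upskp)\,dx+\int_D|\upskp|^{p-2}\upskp\,\pe(\upskp)\,dx\ge 0 .
\end{align*}
The $\beta$ and $f_k$ terms are handled by Young's inequality: $\tau(\beta(\upskp)+f_k,\pe(\upskp))\le \tau\|\beta(\upskp)+f_k\|^2+\frac{\tau}{4}\|\pe(\upskp)\|^2$, and the left part is bounded after summation by Proposition \ref{bounds}. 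For the time-difference term, let $\Psi_\eps$ be the convex primitive of $\pe$ with $\Psi_\eps=0$ on $[0,1]$, i.e. $\Psi_\eps(v)=\frac{(v^-)^2+((v-1)^+)^2}{2\eps}$. Convexity gives $(\upskp-\upsk,\pe(\upskp))\ge\int_D\Psi_\eps(\upskp)-\Psi_\eps(\upsk)$, so this term telescopes. Since $0\le u_0\le1$ by $(H_1)$, $\Psi_\eps(u_0)=0$.

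The crucial term is the stochastic one, $\erww{(G(\upsk)\Delta_{k+1}W,\pe(\upskp))}$. I would split $\pe(\upskp)=\pe(\upsk)+(\pe(\upskp)-\pe(\upsk))$. The first part has zero expectation by independence. For the second part, the naive Young bound $\frac1\eps\|G\Delta W\|\,\|\upskp-\upsk\|$ costs too much. Instead use the support condition $(H_4)$: $g_j(\upsk)\neq0$ only where $\upsk\in[0,1]$, and there $\pe(\upsk)=0$. So
\[
\int_D g_j(\upsk)\big(\pe(\upskp)-\pe(\upsk)\big)=\int_D g_j(\upsk)\,\pe(\upskp).
\]
Moreover, on $\{\upsk\in[0,1]\}$ we have $|\pe(\upskp)|\le\frac1\eps|\upskp-\upsk|$. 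The aim is then to convert the factor $g_j(\upsk)$ into something small near the region where $\pe(\upskp)\ne0$. Since $g_j$ vanishes at $0$ and $1$ and is Lipschitz, on the set where $\pe(\upskp)\neq 0$ we have, e.g., $|g_j(\upsk)|\le\|g_j'\|_\infty\,\mathrm{dist}(\upsk,\{0,1\})\le \|g_j'\|_\infty|\upskp-\upsk|$. Combining this with $|g_j|\le K_g$ from $(H_5)$ and interpolating $|g_j(\upsk)|\le K_g^{1-r/2}\|g_j'\|_\infty^{r/2}|\upskp-\upsk|^{r/2}$ gives a summable weight through $(H_6)$. One then estimates
\[
\erww{\Big|\sum_j\int_D g_j(\upsk)\pe(\upskp)\Delta_{k+1}\mathcal W^j\Big|}
\le\frac1\eps\sum_j\|g_j'\|_\infty^{r/2}K_g^{1-r/2}\,\erww{\int_D|\upskp-\upsk|^{1+r/2}|\Delta_{k+1}\mathcal W^j|}.
\]
Hölder's inequality with Gaussian moments $\erw|\Delta\mathcal W^j|^q\sim\tau^{q/2}$ then controls this. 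Alternatively, one can absorb part of it into $\frac{\tau}{4}\|\pe(\upskp)\|^2$ via Young's inequality, obtaining after summation over $k$ a remainder of the order $\frac{1}{\eps^2}\tau^{\,\gamma}\cdot N$ with $\gamma$ depending on $r$. Using $\sum_k\erww{\|\upskp-\upsk\|^2}\le K_0$ and $\tau=\mathcal O(\eps^{2+\theta})$ with $\theta\le\frac4r-2$ (equivalently $\eps^{-2}\lesssim\tau^{-r/2}$, roughly), the exponents balance and this remainder stays bounded.

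The main obstacle is this last step: choosing the Hölder/Young exponents and the precise use of the support condition so that the resulting power of $\tau/\eps^2$ matches exactly the threshold $\theta\le 4/r-2$, and handling the infinite sum over $j$ (the Brownian increments being independent, one may alternatively compute second moments, which yields $\sum_j\|g_j'\|^2$-type weights). If the pointwise interpolation is too lossy, I would instead bound the term by $\frac{\tau}{4}\erww{\|\pe(\upskp)\|^2}+\frac{C}{\tau}\erww{\|\sum_j g_j(\upsk)\mathds 1_{\{\pe(\upskp)\ne0\}}\Delta\mathcal W^j\|^2}$ and estimate the last quantity using $(H_5)$, $(H_6)$. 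Collecting all terms, absorbing $\frac{\tau}{2}\|\pe(\upskp)\|^2$ into the left-hand side, and summing over $k$ gives the bound $K_1$.
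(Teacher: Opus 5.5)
Everything up to the stochastic term is correct and follows the paper's proof. That includes testing with $\pe(\upskp)$, the convex primitive with $\phe(u_0)=0$, nonnegativity of the $p$-Laplace term, and Young for $\beta$ and $f_k$. It also includes the support argument giving a.e.\ $|g_j(\upsk)\pe(\upskp)|\le\eps^{-1}\min\big(K_g,\|g_j'\|_\infty|\upskp-\upsk|\big)|\upskp-\upsk|$, which carries the same information as the inequality the paper imports from \cite{BSVZ2025}.

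The gap is in the decisive step, which you leave as ``the exponents balance''. As you set it up, it does not close, for three reasons.
\begin{itemize}
\item Your display bounds the $j$-sum term by term and produces the weights $\|g_j'\|_\infty^{r/2}$. But $(H_6)$ only gives $\sum_j\|g_j'\|_\infty^{r}<\infty$, not $\sum_j\|g_j'\|_\infty^{r/2}<\infty$. The sum over $j$ is controlled only if you keep the factors $\sqrt{\lambda_j}$ of \eqref{250813_01}, as the paper's proof does, and use Cauchy--Schwarz in $j$ against $\operatorname{Tr}(\mathcal Q)$.
\item Interpolating at the exponent $r$ is the wrong choice. Carrying out your H\"older argument gives a remainder of order $\tau^{r/4}/\eps$, which is bounded only if $\tau\le C\eps^{4/r}$, i.e.\ only at the endpoint $\theta=\frac4r-2$. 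Your ``equivalently $\eps^{-2}\le C\tau^{-r/2}$'' is false for $\theta<\frac4r-2$, where $\tau=\mathcal O(\eps^{2+\theta})$ is a strictly weaker hypothesis.
\item The fallback of absorbing into $\frac\tau4\|\pe(\upskp)\|^2$ does not help. Once $\pe(\upskp)$ is absorbed, the factor $\eps^{-1}$ is no longer available to trade against $\tau$. With only $a_k:=\erww{\|\upskp-\upsk\|_{L^2(D)}^2}$ at hand, the same estimates give $\sum_k a_k^{s/2}$, of order $\tau^{s/2-1}$ for any interpolation exponent $s<2$, which is unbounded.
\end{itemize}

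The missing idea is to interpolate at the exponent dictated by $\theta$. Take $s:=\frac{4}{2+\theta}\in[r,2)$, so that Lemma \ref{lemma_G1_G3} gives $\sum_j\|g_j'\|_\infty^s\le L_{G,s}^s$; this, not the choice $s=r$, is where $\theta\le\frac4r-2$ enters. Then:
\begin{itemize}
\item pointwise, $|g_j(\upsk)\pe(\upskp)|\le\eps^{-1}K_g^{1-s/2}\|g_j'\|_\infty^{s/2}|\upskp-\upsk|^{1+s/2}$;
\item H\"older on $\Omega\times D$ with exponents $\frac{4}{2+s},\frac{4}{2-s}$, together with Gaussian moments, gives $\erww{|\Delta_{k+1}\mathcal W^j|\int_D|\upskp-\upsk|^{1+s/2}\,dx}\le C\tau^{1/2}a_k^{(2+s)/4}$;
\item a discrete H\"older inequality in $k$, with $\sum_k a_k\le K_0$, yields
\begin{equation*}
\sum_{k=0}^{N-1}\erww{\left|\left(G(\upsk)\Delta_{k+1}W,\pe(\upskp)\right)_{L^2(D)}\right|}\le\frac{C}{\eps}\,\tau^{\frac12}\Big(\frac{T}{\tau}\Big)^{\frac{2-s}{4}}K_0^{\frac{2+s}{4}}=C'\Big(\frac{\tau}{\eps^{2+\theta}}\Big)^{\frac{1}{2+\theta}},
\end{equation*}
which is bounded under the hypothesis.
\end{itemize}

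Completed this way, your argument is a genuine variant of the paper's. The paper instead applies Young pointwise to split off $\frac12|\upskp-\upsk|^2$, then Cauchy--Schwarz in $j$ and Young with $q=1+\frac2\theta$. The $g_j$-factor then becomes $C_1|\upskp-\upsk|^2$, using the same interpolation at exponent $\frac{4}{2+\theta}$. The noise factor is handled by Jensen and Gaussian moments of order $2q$, giving the remainder $(\tau/\eps^{2+\theta})^{2/\theta}$. You replace that pointwise Young step by H\"older in $\omega$ and in $k$. Both routes rest on the same interpolation exponent and the same $\ell^s$-summability from $(H_6)$.
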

\begin{proof}
Assuming $(H_6)$, setting $\theta \in (0,\frac{4}{r}-2]$, $N\in \na^\star$ and $\eps\in \mathbb R_+^\star$, multiplying \eqref{240911_01} by $\pe(\upskp)$, integrating over $D$, taking the expectation and summing over $k\in \{0, \ldots, N-1\}$, we obtain
\begin{align}\label{scmpeunpk_ced}
\begin{aligned}
&\sum_{k=0}^{N-1} \erwb(\upskp-\upsk,\pe(\upskp))_{L^2(D)}\erwe
+\tau \sum_{k=0}^{N-1} \erww{\left(\plap\upskp,\pe(\upskp)\right)_{L^2(D)}}\\
&+\sum_{k=0}^{N-1} \tau \erww{\|\pe(\upskp)\|_{L^2(D)}^2}
=\,\sum_{k=0}^{N-1} \erww{(G(\upsk)\Delta_{k+1}W, \pe(\upskp))_{L^2(D)}}\\
&+\sum_{k=0}^{N-1} \tau \erww{(\beta(\upskp)+f_k,\pe(\upskp))_{L^2(D)}}.
\end{aligned}
\end{align}
$\bullet$ For the study of the first term on the left-hand side of \eqref{scmpeunpk_ced}, we introduce, following \cite{BSVZ2025}, the convex antiderivative $\phe$ of $\pe$ such that for any $v\in \R$
\begin{align*}\label{defphe_ced}
\phe(v)=
\left\{\begin{array}{clll}
\di\frac{v^2}{2\eps}&\text{if}&v\leq 0\\
0&\text{if}&v\in[0,1]\\
\di\frac{(v-1)^2}{2\eps}&\text{if}&v\geq 1.
\end{array}\right.
\end{align*}
By convexity of $\phe$, the inequality
\begin{equation*}
(\upskp-\upsk)\pe(\upskp)=(\upskp-\upsk)\phe'(\upskp)\geq \phe(\upskp)-\phe(\upsk)
\end{equation*}
holds a.e. in $\Omega\times D$
and so
\begin{align}\label{T1_ced}
\sum_{k=0}^{N-1}\erww{\left(\upskp-\upsk,\pe(\upskp)\right)_{L^2(D)}}\geq 0,
\end{align}
owing to the facts that $\erwb \phe(u^{N}_\eps)\erwe \geq 0$ and $\erwb \phe(u_{0})\erwe=0$ since, from Assumption $(H_1)$, $0\leq u_0 \leq 1$ a.e. in $\Omega\times D$.\\
$\bullet$ Noting that $\nabla \pe(\upskp)=\pe'(\upskp)\nabla\upskp$ (see \cite{MM79}), recalling that
\begin{align*}
\left(\plap\upskp,\pe(\upskp)\right)_{L^2(D)}
&=\langle\plap\upskp,\pe(\upskp)\rangle_{V^*,V}\\
&=\int_D \left(|\nabla\upskp(x)|^p\pe'(\upskp(x))+|\upskp(x)|^{p-2}\upskp(x)\pe(\upskp(x))\right)\,dx
\end{align*}
and using the monotonicity of $\pe$, we arrive at
\begin{align}\label{T2_ced}
\tau \sum_{k=0}^{N-1} \erww{\left(\plap\upskp,\pe(\upskp)\right)_{L^2(D)}}
\geq 0.
\end{align}
$\bullet$ Using the definition of the stochastic integral\eqref{250813_01} and the fact that $g_j(\upsk)\pe(\upsk)=0$ for all $j\in\na^\star$ and a.e. in $\Omega\times D$ we get
\begin{align*}
&G(\upsk)\Delta_{k+1}W\pe(\upskp)=\int_{t_k}^{t_{k+1}}G(\upsk)\,dW(s)\pe(\upskp)
\\&=\sum_{j=1}^{\infty}\sqrt{\lambda_j}\int_{t_k}^{t_{k+1}}g_j(\upsk)\,d\mathcal{W}^j_s\pe(\upskp)\\
&=\sum_{j=1}^{\infty}\sqrt{\lambda_j}\Delta_{k+1}\mathcal{W}^j g_j(\upsk) \left(\pe(\upskp)-\pe(\upsk)\right)
\end{align*}
where $\Delta_{k+1}\mathcal{W}^j:=\mathcal{W}^j_{t_{k+1}}-\mathcal{W}^j_{t_k}$ for all $j\in\na^\star$. Moreover, according to \cite{BSVZ2025}, for all $j\in\na^\star$ the following inequality holds true a.e. in $\Omega\times D$
\begin{align*}
\left|g_j(\upsk)\left(\pe(\upskp)-\pe(\upsk)\right)\right| \leq \frac1\eps \left|g_j(\upskp)-g_j(\upsk)\right\|\upskp-\upsk|.
\end{align*}
Consequently, we get
\begin{align}\label{250813_02_ced}
\begin{aligned}
&G(\upsk)\Delta_{k+1}W\pe(\upskp)
\leq\sum_{j=1}^{\infty}\sqrt{\lambda_j}|\Delta_{k+1}\mathcal{W}^j|\frac1\eps \left|g_j(\upsk)-g_j(\upskp)\right\|\upskp-\upsk|.
\end{aligned}
\end{align}
By applying Young's and Cauchy-Schwarz inequalities on the right-hand side of \eqref{250813_02_ced}, we arrive at
\begin{align}\label{250813_03_ced}
&G(\upsk)\Delta_{k+1}W\pe(\upskp)\nonumber\\
\leq\ & \frac{1}{2}|\upskp-\upsk|^2+\frac{1}{2}\left(\sum_{j=1}^{\infty}\sqrt{\lambda_j}|\Delta_{k+1}\mathcal{W}^j|\frac1\eps \left|g_j(\upsk)-g_j(\upskp)\right|\right)^2\\
\leq\ & \frac{1}{2}|\upskp-\upsk|^2+\frac{1}{2}\left(\sum_{j=1}^{\infty}\lambda_j\frac{1}{\eps^2}|\Delta_{k+1}\mathcal{W}^j|^2\right)\left(\sum_{j=1}^{\infty}\left|g_j(\upsk)-g_j(\upskp)\right|^2\right).\nonumber
\end{align}
Now, the second member in the sum on the right-hand side of \eqref{250813_03_ced} deserves our attention. Using Young and H\"older' inequalities with parameter $q=1+\frac{2}{\theta}$ and its conjugate $\frac{q}{q-1}=1+\frac{\theta}{2}$, writing $\lambda_j=\lambda_j^{1/q}\lambda_j^{(q-1)/q}$, and recalling that $\sum_{j=1}^{\infty}\lambda_j=\operatorname{Tr}(\mathcal{Q})$ we have
\begin{align}\label{250813_04_ced}
&\left(\sum_{j=1}^{\infty}\lambda_j\frac{1}{\eps^2}|\Delta_{k+1}\mathcal{W}^j|^2\right)\left(\sum_{j=1}^{\infty}\left|g_j(\upsk)-g_j(\upskp)\right|^2\right)\nonumber\\
&\leq \frac{1}{q}\left(\sum_{j=1}^{\infty}\lambda_j\frac{1}{\eps^2}|\Delta_{k+1}\mathcal{W}^j|^2\right)^q+\frac{q-1}{q}\left(\sum_{j=1}^{\infty}\left|g_j(\upsk)-g_j(\upskp)\right|^2\right)^{\frac{q}{q-1}}\\
&\leq \frac{1}{q}\left(\operatorname{Tr}(\mathcal{Q})\right)^{q-1}\left(\sum_{j=1}^{\infty}\lambda_j\frac{1}{\eps^{2q}}|\Delta_{k+1}\mathcal{W}^j|^{2q}\right)+\frac{q-1}{q}\left(\sum_{j=1}^{\infty}\left|g_j(\upsk)-g_j(\upskp)\right|^2\right)^{\frac{q}{q-1}}.\nonumber
\end{align}
By using the Lipschitz property stated in $(H_4)$, the constant $K_g$ given by $(H_5)$ and after noticing that $\frac{2}{q}+\frac{2(q-1)}{q}=2$, it follows that
\begin{align*}
\begin{aligned}
&\left(\sum_{j=1}^{\infty}\left|g_j(\upsk)-g_j(\upskp)\right|^2\right)^{\frac{q}{q-1}}\\
=\ & \left(\sum_{j=1}^{\infty}\left|g_j(\upsk)-g_j(\upskp)\right|^{\frac{2}{q}} \left|g_j(\upsk)-g_j(\upskp)\right|^{\frac{2(q-1)}{q}} \right)^{\frac{q}{q-1}}\\
 \leq\ & \left(2\sup_{j\in\na^\star}\| g_j\|_{\infty}\right)^{\frac{2}{q-1}}\left(\sum_{j=1}^{\infty}\left|g_j(\upsk)-g_j(\upskp)\right|^{\frac{2(q-1)}{q}}\right)^{\frac{q}{q-1}}\\
\leq\ & (2K_g)^{\frac{2}{q-1}} \left(\sum_{j=1}^{\infty}\left|g_j(\upsk)-g_j(\upskp)\right|^{\frac{2(q-1)}{q}}\right)^{\frac{q}{q-1}}\\
\leq\ & (2K_g)^{\frac{2}{q-1}}\left(\sum_{j=1}^{\infty} \|g_j'\|_\infty^{\frac{2(q-1)}{q}}\right)^{\frac{q}{q-1}} \left(|\upsk-\upskp|^{\frac{2(q-1)}{q}}\right)^{\frac{q}{q-1}}
.
\end{aligned}
\end{align*}
Now, owing to the fact that $\frac{2(q-1)}{q}=\frac{4}{2+\theta}$ and since $\theta \in (0,\frac{4}{r}-2]$ by assumption, one gets that $\frac{2(q-1)}{q} \geq r$, which allows us to apply Lemma \ref{lemma_G1_G3} and we get the existence of a constant $L_{G,\frac{2(q-1)}{q}} > 0$ such that
\begin{equation*}
\left(\sum_{j=1}^{\infty} \|g_j'\|_\infty^{\frac{2(q-1)}{q}}\right)^{\frac{q}{2(q-1)}}
\leq L_{G,\frac{2(q-1)}{q}},
\end{equation*}
and then by setting $C_1:=(2K_g)^{\frac{2}{q-1}} \left(L_{G,\frac{2(q-1)}{q}}\right)^2 \geq 0$, one gets
\begin{align}\label{250813_05_ced}
&\left(\sum_{j=1}^{\infty}\left|g_j(\upsk)-g_j(\upskp)\right|^2\right)^{\frac{q}{q-1}}
\leq C_1 |\upskp-\upsk|^2.
\end{align}
Denoting by $|D|$ the $d$-dimensional Lebesgue measure of $D$, setting $C_2:=\frac{1}{q}(\operatorname{Tr}(\mathcal{Q}))^{q-1}|D|$ and plugging \eqref{250813_04_ced} and \eqref{250813_05_ced} into \eqref{250813_03_ced} we get
\begin{align*}
&\erww{\left(G(\upsk)\Delta_{k+1}W, \pe(\upskp)\right)_{L^2(D)}}\\
\leq& \frac{1}{2}\erww{\left(1+\frac{C_1(q-1)}{q}\right)\|\upskp-\upsk\|_{L^2(D)}^2
+C_2 \sum_{j=1}^{\infty}\lambda_j\frac{1}{\eps^{2q}}|\Delta_{k+1}\mathcal{W}^j|^{2q}}.
\end{align*}
For any $j\in\na^\star$, the absolute moment of order $2q \in (2,\infty)$ of $\Delta_{k+1}\mathcal{W}^j$ can be computed explicitly as follows by using the integral definition of the expectation
\begin{align*}
\erww{|\Delta_{k+1}\mathcal{W}^j|^{2q}}
=\frac{2^q}{\sqrt{\pi}} \Gamma(q + \frac{1}{2})\tau ^q,
\end{align*}
where $\Gamma$ denotes the gamma function defined on $(0,+\infty)$ by $\Gamma: \alpha \mapsto \int_0^\infty t^{\alpha-1} e^{-t}\,dt$. 
Indeed, denoting by $X$ a random variable following the standard normal distribution $\mathcal{N}(0,1)$, we have for any $j\in\na^\star$, 
\begin{align*}
\erww{|\Delta_{k+1}\mathcal{W}^j|^{2q}}= \erwb |\sqrt{\tau} X|^{2q} \erwe=\tau^q \frac{\sqrt{2}}{\sqrt{\pi}} \int_0^{\infty} x^{2q} e^{\frac{-x^2}{2}}\,dx= \frac{2^q}{\sqrt{\pi}} \Gamma(q + \frac{1}{2})\tau^q.
\end{align*}
Then,
\begin{align*}
\erww{(G(\upsk)\Delta_{k+1}W, \pe(\upskp))_{L^2(D)}}\leq C_3 \erww{\|\upskp-\upsk\|_{L^2(D)}^2}+ C_4 \frac{\tau^q}{\eps^{2q}},
\end{align*}
with $C_3 := \frac{1}{2}(1+\frac{C_1(q-1)}{q}) >0$ and $C_4 := \frac{C_2}{2}\operatorname{Tr}(\mathcal{Q}) \frac{2^q}{\sqrt{\pi}} \Gamma(q + \frac{1}{2})\geq 0$. Summing over $k\in\{0,\ldots,N-1\}$ and using Proposition \ref{bounds} we obtain
\begin{align*}
\sum_{k=0}^{N-1} \erww{(G(\upsk)\Delta_{k+1}W, \pe(\upskp))_{L^2(D)}}
\leq C_3 K_0 + T C_4 \frac{\tau^{q-1}}{\eps^{2q}}
\leq C_3 K_0 + T C_4 \left(\frac{\tau}{\eps^{\frac{2q}{q-1}}}\right)^{q-1},
\end{align*}
which leads us to
\begin{align}\label{250813_06_ced}
\sum_{k=0}^{N-1} \erww{(G(\upsk)\Delta_{k+1}W, \pe(\upskp))_{L^2(D)}}
\leq C_3 K_0 + T C_4 \left( \frac{\tau}{\eps^{2+\theta}} \right)^{\frac{2}{\theta}}.
\end{align}
$\bullet$ Let us now turn to the study of the last term in \eqref{scmpeunpk_ced}. Using Young's inequality, the inequality $(a+b)^2\leq 2(a^2+b^2)$ and the Lipschitz continuity of $\beta$ stated in $(H_2)$, we get
\begin{align*}
(\beta(\upskp)+f_k,\pe(\upskp))_{L^2(D)} \leq L_{\beta}^2\| \upskp\|_{L^2(D)}^2+\| f_k\|_{L^2(D)}^2+\frac{1}{2}\| \pe(\upskp)\|_{L^2(D)}^2.
\end{align*}
Thus, by using the constant $K_0$ given by Proposition \ref{bounds}, we arrive at
\begin{align}\label{caro_10_02_a}
&\sum_{k=0}^{N-1} \tau \erww{(\beta(\upskp)+f_k,\pe(\upskp))_{L^2(D)}}
\leq C_5 + \frac{1}{2}\sum_{k=0}^{N-1}\tau \erww{\| \pe(\upskp)\|_{L^2(D)}^2},
\end{align}
with $C_5:=L_\beta^2TK_0+ \erww{\int_0^T \| f\|_{L^2(D)}^2\,dt}$. Gathering \eqref{T1_ced}, \eqref{T2_ced}, \eqref{250813_06_ced} and \eqref{caro_10_02_a}, we find that
\begin{align*}
\frac{1}{2}\sum_{k=0}^{N-1}\tau \erww{\| \pe(\upskp)\|_{L^2(D)}^2}
\leq C_3 K_0
+ T C_4 \left( \frac{\tau}{\eps^{2+\theta}} \right)^{\frac{2}{\theta}}
+ C_5
\end{align*}
and the result follows since by assumption $\tau=\mathcal{O}(\eps^{2+\theta})$.
\end{proof}
\begin{rem}
The condition $r<2$ in Assumption $(H_6)$ comes from the fact that in the previous proof, we want to apply Lemma \ref{lemma_G1_G3} for the exponent $\frac{4}{2+\theta}$, with $\theta>0$. To do so, we have to respect the condition $\frac{4}{2+\theta}\geq r$ which leads us to $\theta \leq \frac{4}{r}-2$, thus we must impose $\frac{4}{r}-2>0$, which led us to the condition $r<2$.
\end{rem}
\begin{lem}\label{pebounds}
Given the constant $r\in[1,2)$ of $(H_6)$, if we assume that there exists $\theta\in(0,\frac{4}{r}-2]$ such that $\frac{T}{N}=\mathcal{O}(\eps^{2+\theta})$, then the sequences
$(\pe(\uhnr))_{N,\eps}$ and $(\pe(\uhnl))_{N,\eps}$ are bounded in $L^2(\Omega;L^2(0,T;L^2(D)))$ and in $L^2_{\mathcal{P}_T}\big(\Omega_T;L^2(D)\big)$, respectively, independently of the discretization and regularization parameters $N\in\na^\star$ and $\eps\in\R_+^\star$, for $N$ sufficiently large.
\end{lem}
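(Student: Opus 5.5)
The plan is to read both bounds directly off Proposition~\ref{boundpe} by rewriting the norms of the piecewise constant processes as discrete sums. The only extra ingredient for $\pe(\uhnl)$ is the initial index $n=0$, which is handled by $(H_1)$.

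\textbf{Step 1 (the right-continuous process).} Since $\uhnr=\upsnp$ on $[t_n,t_{n+1})$ and $\{T\}$ is Lebesgue-negligible, we have
\begin{align*}
\|\pe(\uhnr)\|_{L^2(\Omega;L^2(0,T;L^2(D)))}^2=\sum_{n=0}^{N-1}\tau\,\erww{\|\pe(\upsnp)\|_{L^2(D)}^2}.
\end{align*}
Under the coupling $\frac{T}{N}=\mathcal{O}(\eps^{2+\theta})$, Proposition~\ref{boundpe} bounds this by $K_1$ for $N$ large. Measurability of $\pe(\uhnr)$ as a random variable with values in $L^2(0,T;L^2(D))$ follows because $\pe$ is Lipschitz on $\R$, hence induces a continuous map on $L^2(D)$, and each $\upsnp$ is a measurable random variable.

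\textbf{Step 2 (the left-continuous process).} Similarly,
\begin{align*}
\|\pe(\uhnl)\|^2=\sum_{n=0}^{N-1}\tau\,\erww{\|\pe(\upsn)\|_{L^2(D)}^2}=\tau\,\erww{\|\pe(u_0)\|_{L^2(D)}^2}+\sum_{n=0}^{N-2}\tau\,\erww{\|\pe(\upsnp)\|_{L^2(D)}^2}.
\end{align*}
By $(H_1)$, $0\le u_0\le 1$ a.e., so $\pe(u_0)=0$. The remaining sum is dominated by the full sum in Proposition~\ref{boundpe}, so it is again bounded by $K_1$.

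\textbf{Step 3 (predictability).} For predictability, $\pe(\uhnl)$ is an elementary process: on $[t_n,t_{n+1})$ it equals $\pe(\upsn)$, which is $\mathcal{F}_{t_n}$-measurable with values in $L^2(D)$, since $\upsn$ is and $\pe$ induces a continuous map on $L^2(D)$. This is the same argument as in Lemma~\ref{210611_lem01} and Lemma~\ref{boundguhnlr_and_betauhnlr}, so $\pe(\uhnl)\in L^2_{\mathcal{P}_T}(\Omega_T;L^2(D))$ with the stated bound.

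The only mildly delicate point is the index shift at $n=0$ for $\uhnl$, which the constraint on $u_0$ resolves. Otherwise the lemma is a direct rewriting of Proposition~\ref{boundpe}, and I expect no real obstacle.
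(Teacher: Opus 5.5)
Your proposal is correct and follows the paper's argument: both bounds are read off from Proposition~\ref{boundpe}, and the predictability of $\pe(\uhnl)$ is inherited from the adapted, piecewise-constant (elementary) structure of $\uhnl$. Your explicit handling of the $n=0$ term, via $\pe(u_0)=0$ from $(H_1)$, fills in an index shift that the paper leaves implicit.
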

\begin{proof}
The boundedness of $(\pe(\uhnr))_{N,\eps}$ in $L^2(\Omega;L^2(0,T;L^2(D)))$ is a consequence of Proposition \ref{boundpe}. As for $(\pe(\uhnl))_{N,\eps}$, its boundedness in $L^2_{\mathcal{P}_T}\big(\Omega_T;L^2(D)\big)$ also comes from Proposition \ref{boundpe}, combined with the predictability of $\uhnl$ with values in $L^2(D)$ transmitted to $(\pe(\uhnl))_{N,\eps}$.
\end{proof}
\begin{lem}\label{250813_lem1}
There exists a constant $K_2 \geq 0$ not depending on $N\in\na^\star$ and on $\eps\in\R_+^\star$ such that, for $N$ sufficiently large,
\begin{align}\label{250813_lem1_eq}
&\erww{\max_{n\in\{0,\ldots,N\}}\|\upsn\|_{L^2(D)}^2} \leq K_2.
\end{align}
\end{lem}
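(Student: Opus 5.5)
We return to the pathwise energy identity behind \eqref{before_discarding}, but this time without taking the expectation before maximizing. For each $k\in\{0,\ldots,N-1\}$ we test \eqref{240911_01} with $\upskp$. Using \eqref{term1}, \eqref{term2}, the sign property \eqref{term3} (which holds pointwise in $\Omega$ since $\pe(v)v\geq 0$), the Lipschitz bound on $\beta$ and Young's inequality for $f_k$, we get \Pas
\begin{align*}
&\frac12\|\upskp\|_{L^2(D)}^2-\frac12\|\upsk\|_{L^2(D)}^2+\frac12\|\upskp-\upsk\|_{L^2(D)}^2+\tau\|\upskp\|_V^p\\
\leq\ &(G(\upsk)\Delta_{k+1}W,\upsk)_{L^2(D)}+(G(\upsk)\Delta_{k+1}W,\upskp-\upsk)_{L^2(D)}\\
&+\tau\left(L_\beta+\tfrac12\right)\|\upskp\|_{L^2(D)}^2+\frac\tau2\|f_k\|_{L^2(D)}^2.
\end{align*}
The term with $\upskp-\upsk$ is handled by Young's inequality as
\[
\tfrac12\|\upskp-\upsk\|^2_{L^2(D)}+\tfrac12\|G(\upsk)\Delta_{k+1}W\|^2_{L^2(D)},
\]
and the first piece is absorbed into the left-hand side. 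We then discard the nonnegative $V$-norm term and sum over $k\in\{0,\ldots,n-1\}$. Taking the maximum over $n\in\{0,\ldots,N\}$ and then the expectation gives
\begin{align*}
\frac12\erww{\max_{n}\|\upsn\|_{L^2(D)}^2}\leq\ &\frac12\erww{\|u_0\|_{L^2(D)}^2}+\erww{\max_{n}\Big|\sum_{k=0}^{n-1}(G(\upsk)\Delta_{k+1}W,\upsk)_{L^2(D)}\Big|}\\
&+\frac12\sum_{k=0}^{N-1}\erww{\|G(\upsk)\Delta_{k+1}W\|_{L^2(D)}^2}\\
&+\left(L_\beta+\tfrac12\right)\tau\sum_{k=0}^{N-1}\erww{\|\upskp\|_{L^2(D)}^2}+\frac12\|f\|^2_{L^2(\Omega_T;L^2(D))}.
\end{align*}
By the It\^o isometry computation in the proof of Proposition \ref{bounds}, together with \eqref{uhnboundbis}, the third and fourth terms on the right are bounded by $\frac12 L_{G,2}^2T\Upsilon$ and $(L_\beta+\frac12)T\Upsilon$ respectively. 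The remaining terms are data.

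The main step is the martingale term. We observe that
\[
\sum_{k=0}^{n-1}(G(\upsk)\Delta_{k+1}W,\upsk)_{L^2(D)}=\int_0^{t_n}\big(G(\uhnl(s))\,dW(s),\uhnl(s)\big)_{L^2(D)},
\]
which is a real-valued continuous martingale, since $\uhnl$ is predictable by Lemma \ref{210611_lem01}. Its maximum over the grid points is bounded by its supremum over $[0,T]$. The Burkholder--Davis--Gundy inequality with exponent $1$ then yields the bound
\[
C\,\erww{\Big(\int_0^T\|G(\uhnl)\|_{\HS}^2\|\uhnl\|_{L^2(D)}^2\,ds\Big)^{1/2}}
\leq C\,\erww{\max_n\|\upsn\|_{L^2(D)}\Big(\int_0^T\|G(\uhnl)\|_{\HS}^2\,ds\Big)^{1/2}}.
\]
We apply Young's inequality to this in the form $\frac14\erww{\max_n\|\upsn\|_{L^2(D)}^2}+C^2\erww{\int_0^T\|G(\uhnl)\|^2_{\HS}\,ds}$. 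The last expectation is bounded by $L_{G,2}^2T\Upsilon$, using Lemma \ref{boundguhnlr_and_betauhnlr}.

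Finally, $\erww{\max_n\|\upsn\|_{L^2(D)}^2}$ is finite for each fixed $N$, since it is at most $\sum_n\erww{\|\upsn\|_{L^2(D)}^2}$. This lets us absorb the $\frac14$ term into the left-hand side, which gives \eqref{250813_lem1_eq} with $K_2$ depending only on $u_0$, $f$, $T$, $L_\beta$, $L_{G,2}$ and $\Upsilon$. The condition that $N$ be sufficiently large enters only through the use of \eqref{uhnboundbis}, which requires $\tau\leq\frac34\frac{1}{1+2L_\beta}$. No relation between $\tau$ and $\eps$ is needed, because the $\pe$ term has been discarded by its sign.
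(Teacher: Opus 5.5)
Your proof is correct and follows essentially the same route as the paper: a pathwise energy inequality tested with $\upskp$, then summation, maximum and expectation. The martingale term $\sum_k (G(\upsk)\Delta_{k+1}W,\upsk)_{L^2(D)}$ is handled by Burkholder--Davis--Gundy with exponent $1$ followed by Young's inequality and absorption. The differences are minor: you bound $\tau(L_\beta+\frac12)\sum_k\erww{\|\upskp\|_{L^2(D)}^2}$ directly via \eqref{uhnboundbis} instead of dividing by $\delta=1-\tau(1+2L_\beta)$ as the paper does, and you explicitly justify the finiteness of $\erww{\max_n\|\upsn\|_{L^2(D)}^2}$ needed for the absorption step, which the paper leaves implicit.
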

\begin{proof}
In order to prove this lemma, we adapt arguments developed in \cite[Lemma 4]{VZ19}.\\
Set $N\in\na^\star$, $\tau=\frac{T}{N}$, $\eps\in\R_+^\star$, fix $n\in\{1,\ldots,N\}$ and assume that $\tau \leq \frac{3}{4} \frac{1}{1 + 2 L_\beta}$.\\
For any $k\in \{0,\ldots,n-1\}$, we multiply \eqref{240911_01} by $\upskp$ and integrate over $D$ to arrive at
\begin{align*}
&(\upskp-\upsk,\upskp)_{L^2(D)}
+ \tau \left(\plap\upskp,\upskp\right)_{L^2(D)}
+ \tau \left(\pe(\upskp),\upskp\right)_{L^2(D)}\\
=\,& \left(G(\upsk)\Delta_{k+1}W, \upskp\right)_{L^2(D)}
+ \tau \left(\beta(\upskp),\upskp\right)_{L^2(D)}
+ \tau \left(f_k,\upskp\right)_{L^2(D)},
\end{align*}
which can be decomposed as
\begin{align*}
I_1 + I_2 + I_3 = I_4 + I_5 + I_6.
\end{align*}
Let us study each term separately. Firstly, note that $I_1$ can be rewritten as
\begin{align}\label{ced1}
I_1 =\,& (\upskp-\upsk,\upskp)_{L^2(D)} \\
=\,& \frac{1}{2} \left( \| \upskp \|_{L^2(D)}^2 - \| \upsk \|_{L^2(D)}^2 + \| \upskp - \upsk \|_{L^2(D)}^2 \right).
\end{align}
Secondly, we remark that
\begin{align}\label{ced2}
I_2 =\,& \tau \left(\plap\upskp,\upskp\right)_{L^2(D)} = \tau \| \upskp \|_V^2 \geq 0.
\end{align}
Thirdly, thanks to the monotonicity of $\pe$ and the fact that $\pe(0) = 0$, we have
\begin{align}\label{ced3}
I_3 =\,& \tau \left(\pe(\upskp),\upskp\right)_{L^2(D)} = \tau \left(\pe(\upskp) - \pe(0),\upskp-0\right)_{L^2(D)} \geq 0.
\end{align}
Fourthly, by applying successively Cauchy-Schwarz inequality and Young's inequality, we prove that
\begin{align}\label{ced4}
I_4 =\,& \left(G(\upsk)\Delta_{k+1}W, \upskp\right)_{L^2(D)}\\
=\,& \left(G(\upsk)\Delta_{k+1}W, \upskp - \upsk\right)_{L^2(D)} + \left(G(\upsk)\Delta_{k+1}W, \upsk\right)_{L^2(D)}\\
\leq\,& \| G(\upsk)\Delta_{k+1}W \|_{L^2(D)} \| \upskp - \upsk\|_{L^2(D)} + (G(\upsk)\Delta_{k+1}W, \upsk)_{L^2(D)}\\
\leq\,& \frac{1}{2} \| G(\upsk)\Delta_{k+1}W \|_{L^2(D)}^2 + \frac{1}{2} \| \upskp - \upsk\|_{L^2(D)}^2
+ (G(\upsk)\Delta_{k+1}W, \upsk)_{L^2(D)}.
\end{align}
Fifthly, owing to the Lipschitz property of $\beta$ and the fact that $\beta(0) = 0$, we show that
\begin{align}\label{ced5}
\hspace*{-0.3cm}I_5 =\,& \tau \left(\beta(\upskp),\upskp\right)_{L^2(D)} = \tau \left(\beta(\upskp) - \beta(0),\upskp-0\right)_{L^2(D)}
\leq \tau L_\beta \| \upskp \|_{L^2(D)}^2.
\end{align}
At last, using Young's inequality we get
\begin{align}\label{ced6}
I_6 =\,& \tau \left(f_k,\upskp\right)_{L^2(D)} \leq \frac{\tau}{2} \| f_k \|_{L^2(D)}^2 + \frac{\tau}{2} \| \upskp \|_{L^2(D)}^2.
\end{align}
Gathering together (\ref{ced1})-(\ref{ced2})-(\ref{ced3})-(\ref{ced4})-(\ref{ced5})-(\ref{ced6}), discarding nonnegative terms, and multiplying by $2$, we arrive at
\begin{align*}
&\delta \| \upskp \|_{L^2(D)}^2 - \| \upsk \|_{L^2(D)}^2\\
\leq\,& \| G(\upsk)\Delta_{k+1}W \|_{L^2(D)}^2
+ 2 \left(G(\upsk)\Delta_{k+1}W, \upsk\right)_{L^2(D)}
+ \tau \| f_k \|_{L^2(D)}^2.
\end{align*}
where $\delta := 1 - \tau (1 + 2 L_\beta)$.
Since by assumption $0 < \tau \leq \frac{3}{4} \frac{1}{1 + 2 L_\beta}$, then $\frac{1}{4} \leq \delta <1$ and so $1 < \delta^{-1} \leq 4$, which leads us to
\begin{align*}
&\| \upskp \|_{L^2(D)}^2 - \| \upsk \|_{L^2(D)}^2 \\
\leq &
\ \delta^{-1} \left( \| G(\upsk)\Delta_{k+1}W \|_{L^2(D)}^2
+ 2 \left(G(\upsk)\Delta_{k+1}W, \upsk\right)_{L^2(D)}
+ \tau \| f_k \|_{L^2(D)}^2 \right)+ (\delta^{-1} - 1) \| \upsk \|_{L^2(D)}^2.
\end{align*}
Owing again to the assumption on $\tau$, we also have $0 < \delta ^{-1} - 1 = \delta^{-1} (1 + 2 L_\beta) \tau \leq 4 (1 + 2 L_\beta) \tau$ so
\begin{align*}
\| \upskp \|_{L^2(D)}^2 - \| \upsk \|_{L^2(D)}^2 \leq &4 \| G(\upsk)\Delta_{k+1}W \|_{L^2(D)}^2
+ 2 \delta^{-1} \left(G(\upsk)\Delta_{k+1}W, \upsk\right)_{L^2(D)}\\
&+ 4 \tau \| f_k \|_{L^2(D)}^2
+ 4 (1 + 2 L_\beta) \tau \| \upsk \|_{L^2(D)}^2,
\end{align*}
and after summing over $k \in \{0,\ldots,n-1\}$, we obtain
\begin{align*}
\| u_\eps^n \|_{L^2(D)}^2
\leq &\| u_\eps^0 \|_{L^2(D)}^2
+ 4 \sum_{k=0}^{n-1} \| G(\upsk)\Delta_{k+1}W \|_{L^2(D)}^2
+ 2\delta^{-1} \sum_{k=0}^{n-1} \left(G(\upsk)\Delta_{k+1}W, \upsk\right)_{L^2(D)}\\
&+ 4 \tau \sum_{k=0}^{n-1} \| f_k \|_{L^2(D)}^2
+ 4 (1 + 2 L_\beta) \tau \sum_{k=0}^{n-1} \| \upsk \|_{L^2(D)}^2.
\end{align*}
From the fact that $\delta^{-1} \leq 4$, one gets
\begin{align*}
\| \upsn \|_{L^2(D)}^2
\leq &\| u_\eps^0 \|_{L^2(D)}^2
+ 4 \sum_{k=0}^{n-1} \| G(\upsk)\Delta_{k+1}W \|_{L^2(D)}^2
+ 8 \left| \sum_{k=0}^{n-1} \left(G(\upsk)\Delta_{k+1}W, \upsk\right)_{L^2(D)} \right|\\
&+ 4 \tau \sum_{k=0}^{n-1} \| f_k \|_{L^2(D)}^2
+ 4 (1 + 2 L_\beta) \tau \sum_{k=0}^{n-1} \| \upsk \|_{L^2(D)}^2.
\end{align*}
Taking the maximum over $n \in \{1,\ldots,N\}$, we arrive at
\begin{align*}
\max_{n \in \{1,\ldots,N\}} \| \upsn \|_{L^2(D)}^2 & \leq\,
\| u_\eps^0 \|_{L^2(D)}^2
+ 4 \sum_{k=0}^{N-1} \| G(\upsk)\Delta_{k+1}W \|_{L^2(D)}^2\\
&+ 8 \max_{n \in \{1,\ldots,N\}} \left| \sum_{k=0}^{n-1} \left(G(\upsk)\Delta_{k+1}W, \upsk\right)_{L^2(D)} \right|\\
&+ 4 \tau \sum_{k=0}^{N-1} \| f_k \|_{L^2(D)}^2
+ 4 (1 + 2 L_\beta) \tau \sum_{k=0}^{N-1} \| \upsk \|_{L^2(D)}^2.
\end{align*}
Taking the expectation, we obtain
\begin{align}\label{ced7}
\erwb \max_{n \in \{1,\ldots,N\}} \| \upsn \|_{L^2(D)}^2 \erwe 
& \leq \erwb \| u_\eps^0 \|_{L^2(D)}^2 \erwe + J_1 + J_2 + J_3 + J_4.
\end{align}
where
\begin{align*}
J_1&=4 \sum_{k=0}^{N-1} \erwb \| G(\upsk)\Delta_{k+1}W \|_{L^2(D)}^2 \erwe\\
J_2&=8 \erwb \max_{n \in \{1,\ldots,N\}} \left| \sum_{k=0}^{n-1} \left(G(\upsk)\Delta_{k+1}W, \upsk\right)_{L^2(D)} \right| \erwe\\
J_3&=4 \erwb \sum_{k=0}^{N-1} \tau \| f_k \|_{L^2(D)}^2 \erwe\\
\text{and }J_4&=4 (1 + 2 L_\beta) \tau \sum_{k=0}^{N-1} \erwb \| \upsk \|_{L^2(D)}^2 \erwe.
\end{align*}
By $(H_4)$ and Itô's isometry \cite[Proposition 4.20, see also Remark 4.21 p. 97]{DPZ14}, we have
\begin{align*}
J_1 =\, 4 \erwb \int_0^T \| G(\uhnl(t)) \|_{\HS}^2\,dt \erwe
=\, & 4 \erwb \| G(\uhnl) \|_{L^2(0,T;\HS)}^2 \erwe.
\end{align*}
Now, we turn our attention to $J_2$:
\begin{align*}
 J_2 = & 8 \erwb \max_{n \in \{1,\ldots,N\}} \left| \sum_{k=0}^{n-1} \left(G(\upsk)\Delta_{k+1}W, \upsk\right)_{L^2(D)} \right| \erwe\\
= & 8 \erwb \max_{n \in \{1,\ldots,N\}} \left| \sum_{k=0}^{n-1} \left(\int_{t_k}^{t_{k+1}}G(\upsk)\,dW(s), \upsk\right)_{L^2(D)} \right| \erwe.
\end{align*}
Denoting the space of bounded linear operators from $L^2(D)$ to $\R$ by $\operatorname{Lin}(L^2(D);\R)$ and considering the mapping $\mathscr{L}:L^2(D)\to\operatorname{Lin}(L^2(D);\R)$ defined by $\mathscr{L}(\phi):v\in L^2(D)\mapsto (v,\phi)_{L^2(D)}\in \R$, for any $\phi\in L^2(D)$, the application of \cite[Lemma 2.4.1, p.42]{LR} with the linear operator $\mathscr{L}(\upsk)$ allows us to state that
\begin{equation*}
\left(\int_{t_k}^{t_{k+1}} G(\upsk)\,dW(s), \upsk\right)_{L^2(D)}=\int_{t_k}^{t_{k+1}}(G(\upsk)(\cdot),\upsk)_{L^2(D)}\,dW(s)
\end{equation*}
for any $k\in\{0,\ldots,n-1\}$. We obtain the following estimate for $J_2$:
\begin{align*}
J_2 = & 8 \erwb \max_{n \in \{1,\ldots,N\}} \left| \sum_{k=0}^{n-1} \int_{t_k}^{t_{k+1}}\left(G(\upsk)(\cdot), \upsk\right)_{L^2(D)}\,dW(s) \right| \erwe\\
= & 8 \erwb \max_{n \in \{1,\ldots,N\}} \left| \int_0^{t_n} \left(G(\uhnl(s))(\cdot), \uhnl(s)\right)_{L^2(D)}\,dW(s) \right| \erwe \\
= & 8 \erwb \max_{n \in \{1,\ldots,N\}} \left| \int_0^{t_n} \left(G(\uhnl(s))(\cdot), \uhnl(s)\right)_{L^2(D)}\,dW(s) \right| \erwe\\
= & 8\erwb \sup_{t \in [0,T]} \left| \int_0^{t} \left(G(\uhnl(s))(\cdot), \uhnl(s)\right)_{L^2(D)}\,dW(s) \right| \erwe.
\end{align*}
From \cite[Theorem 4.36, p.114]{DPZ14}, we may deduce that
\begin{align*}
&J_2
\leq 24 \left(\erwb\int_0^T \| \left(G(\uhnl(s))(\cdot), \uhnl(s)\right)_{L^2(D)} \|_{HS(L^2(D);\R)}^2\,ds \erwe\right)^{\frac{1}{2}}.
\end{align*}
Noting that for all $s\in [0,T]$, $\left(G(\uhnl(s))(\cdot),\uhnl(s)\right)_{L^2(D)} = \mathscr{L}(\uhnl(s))\big( G(\uhnl(s))\big)$,
we obtain from standard estimates for Hilbert-Schmidt operators (see, \textit{e.g.,} \cite[Remark B.0.6, p.217]{LR}) and Riesz isomorphism, that
\begin{align*}
\| \left(G(\uhnl(s))(\cdot), \uhnl(s)\right)_{L^2(D)} \|_{HS(L^2(D);\R)}^2
&\leq \| G(\uhnl)(s) \|_{\HS}^2 \Vert \mathscr{L}(\uhnl(s)) \Vert^2_{\operatorname{Lin}(L^2(D);\R)}\\
&\leq \| G(\uhnl)(s) \|_{\HS}^2 \Vert \uhnl(s) \Vert^2_{L^2(D)}\\
&\leq | G(\uhnl)(s) \|_{\HS}^2 \sup_{t\in [0,T]} \| \uhnl(t) \|_{L^2(D)}^2.
\end{align*}
Consequently,
\begin{align*}
J_2&\leq 24 \erwb \left(\int_0^T \| G(\uhnl)(s) \|_{\HS}^2\,ds \sup_{t\in[0,T]} \|\uhnl(t)\|_{L^2(D)}^2 \right)^{\frac{1}{2}} \erwe\\
&\leq 24 \erwb \| G(\uhnl) \|_{L^2(0,T;\HS)} \sup_{t\in [0,T]} \|\uhnl(t)\|_{L^2(D)} \erwe.
\end{align*}
Applying Young's inequality with any $\alpha > 0$, we arrive at
\begin{align*}
J_2 \leq\ & \frac{12}{\alpha} \erwb \| G(\uhnl) \|_{L^2(0,T;\HS)}^2 \erwe
+ 12 \alpha \erwb \sup_{t\in[0,T]} \|\uhnl(t)\|_{L^2(D)}^2 \erwe\\
\leq\ & \frac{12}{\alpha} \erwb \| G(\uhnl) \|_{L^2(0,T;\HS)}^2 \erwe
+ 12 \alpha \erwb \max_{n\in\{1,\ldots,N\}} \|\upsn\|_{L^2(D)}^2 \erwe\\
&+ 12 \alpha \erwb \|u_0\|_{L^2(D)}^2 \erwe.
\end{align*}
With Lemma \ref{boundguhnlr_and_betauhnlr} and Hypothesis $(H_1)$, there exist constants $C_1, C_2>0$ that are independent of the regularization and discretization parameters and such that
\begin{align}\label{J1}
J_1 \leq C_1
\end{align}
and
\begin{align}\label{J2}
&J_2
\leq C_2
+ 12 \alpha \erwb \max_{n\in\{1,\ldots,N\}} \|\upsn\|_{L^2(D)}^2 \erwe.
\end{align}
From Lemma \ref{CVfhnl}, there exists a constant $C_3>0$ independent of the regularization and discretization parameters and such that
\begin{align}\label{J3}
J_3 = 4 \erwb \sum_{k=0}^{N-1} \tau \| f_k \|_{L^2(D)}^2 \erwe
= 4 \| f^N \|_{L^2(\Omega;L^2(0,T;L^2(D)))}^2
\leq C_3.
\end{align}
Thanks to the estimate \eqref{uhnboundbis},
\begin{align}\label{J4}
J_4 \leq 4 (1 + 2 L_\beta) \tau N \max_{n\in\{1,\ldots,N\}} \erwb \| \upsk \|_{L^2(D)}^2 \erwe
\leq 4 (1 + 2 L_\beta) T \Upsilon.
\end{align}
Plugging \eqref{J1}, \eqref{J2}, \eqref{J3} and \eqref{J4} into \eqref{ced7}, we get
\begin{align*}
\erwb \max_{n \in \{1,\ldots,N\}} \| \upsn \|_{L^2(D)}^2 \erwe \leq\ &
\erwb \| u_\eps^0 \|_{L^2(D)}^2 \erwe
+ C_1 + C_2 + C_3 + 4 (1 + 2 L_\beta) T \Upsilon
+ 12 \alpha \erwb \max_{n\in\{1,\ldots,N\}} \|\upsn\|_{L^2(D)}^2 \erwe.
\end{align*}
For convenience, we set $\alpha = 1 / 24$ in this last inequality, multiply both sides by $2$, and we have the announced result thanks to Hypothesis $(H_1)$:
\begin{align*}
\erwb \max_{n \in \{1,\ldots,N\}} \| \upsn \|_{L^2(D)}^2 \erwe & \leq\,
2 \bigg( \| u_0 \|_{L^2(\Omega;L^2(D))}^2 + C_1 + C_2 + C_3 \bigg)
+ 8 (1 + 2 L_\beta) T \Upsilon.
\end{align*}
\end{proof}
The bound \eqref{250813_lem1_eq} given by Lemma \ref{250813_lem1} is crucial to get the following estimate on the sequence $(M_{N,\eps}-\widehat{M}_{N,\eps})_{N,\eps}$ which will ensure that if the sequences $(M_{N,\eps})_{N,\eps}$ and $(\widehat{M}_{N,\eps})_{N,\eps}$ weakly converge, it is necessarily towards the same limit:
\begin{lem}\label{251007_l1}
There exists a constant $\gamma>0$ not depending on $N\in\na^\star$ and on $\eps\in\R_+^\star$ such that, for $N$ sufficiently large,
\begin{align}\label{160316_101}
&\erww{\sup_{t\in[0,T]}\left\| M_{N,\eps}(t)-\widehat{M}_{N,\eps}(t)\right\|_{L^2(D)}}
\leq K_2 \tau^\gamma,
\end{align}
where $K_2\geq 0$ is a constant given by Lemma \ref{250813_lem1}.
\end{lem}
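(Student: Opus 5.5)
Since $M_{N,\eps}(t_n)=M^n_\eps=\widehat M_{N,\eps}(t_n)$ at every grid point and $\widehat M_{N,\eps}$ is the affine interpolant of these values, for $t\in[t_n,t_{n+1})$ we can write, with $s=(t-t_n)/\tau\in[0,1)$,
\begin{align*}
M_{N,\eps}(t)-\widehat M_{N,\eps}(t)=(1-s)\big(M_{N,\eps}(t)-M_{N,\eps}(t_n)\big)+s\big(M_{N,\eps}(t)-M_{N,\eps}(t_{n+1})\big).
\end{align*}
Hence the supremum over $[0,T]$ of the difference is controlled by twice the maximal oscillation of $M_{N,\eps}$ over the subintervals:
\begin{align*}
\sup_{t\in[0,T]}\|M_{N,\eps}(t)-\widehat M_{N,\eps}(t)\|_{L^2(D)}\le 2\max_{n}\sup_{t\in[t_n,t_{n+1}]}\Big\|\int_{t_n}^{t}G(\upsn)\,dW(s)\Big\|_{L^2(D)}.
\end{align*}
Note that on $[t_n,t_{n+1})$ the integrand $G(\uhnl)=G(\upsn)$ is constant in time and $\mathcal F_{t_n}$-measurable, so $\int_{t_n}^tG(\upsn)\,dW=G(\upsn)(W(t)-W(t_n))$.

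To estimate the expectation of a maximum of $N$ terms, I would pass to a higher moment. I would bound $\max_n X_n\le(\sum_n X_n^{2m})^{1/(2m)}$, where $X_n$ denotes the local supremum above. Jensen's inequality then gives $\erw[\max_nX_n]\le(\sum_n\erw[X_n^{2m}])^{1/(2m)}$. For each $n$, the Burkholder–Davis–Gundy inequality \cite[Theorem 4.36]{DPZ14} yields
\begin{align*}
\erw[X_n^{2m}]\le c_m\,\erww{\Big(\int_{t_n}^{t_{n+1}}\|G(\upsn)\|_{\HS}^2ds\Big)^{m}}=c_m\tau^m\erww{\|G(\upsn)\|_{\HS}^{2m}}.
\end{align*}

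The main obstacle is controlling $\erw\|G(\upsn)\|_{\HS}^{2m}$ for some $m>1$. Lemma \ref{250813_lem1} only provides second moments, and the Lipschitz bound \eqref{G3_inequality_G_q2} would require $2m$-th moments of $\upsn$. This is where I would use $(H_4)$–$(H_5)$ instead of the Lipschitz bound. Since $\operatorname{supp} g_j\subset[0,1]$, Lipschitz continuity gives $|g_j(v)|\le\|g_j'\|_\infty$ for all $v$, and hence
\begin{align*}
\|G(v)\|_{\HS}^2=\sum_j\|g_j(v)\|_{L^2(D)}^2\le |D|\sum_j\|g_j'\|_\infty^2\le|D|L_{G,2}^2.
\end{align*}
This bound is deterministic and uniform in $v$, so $\erw\|G(\upsn)\|_{\HS}^{2m}\le(|D|L_{G,2}^2)^m$. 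Summing over $N=T/\tau$ indices gives
\begin{align*}
\erw\Big[\sup_{t\in[0,T]}\|M_{N,\eps}(t)-\widehat M_{N,\eps}(t)\|_{L^2(D)}\Big]\le C\,(T\tau^{m-1})^{1/(2m)}=C'\tau^{\frac{m-1}{2m}}.
\end{align*}
Taking for instance $m=2$ gives $\gamma=1/4$.

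It remains to match the constant to the form stated with $K_2$. I would enlarge $K_2$ from Lemma \ref{250813_lem1} if necessary, since it is only an upper bound that is independent of $N$ and $\eps$. If the authors intend the proof to go through $\max_n\|\upsn\|^2$ instead, the fallback is the Lipschitz bound combined with the second-moment estimate \eqref{250813_lem1_eq}. That route, however, does not give decay in $\tau$ without higher moments, so I would rely on the uniform boundedness of $G$ obtained from the support condition.
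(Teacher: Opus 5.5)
Your proof is correct, but after the common first step (reducing to $2\max_n\sup_{t\in[t_n,t_{n+1}]}\|\int_{t_n}^t G(\upsn)\,dW(s)\|_{L^2(D)}$) it takes a different route from the paper's.

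\textbf{The paper's route.} It bounds the local integral pathwise by $C\|G(\upsn)\|_{\HS}\|W(t)-W(t_n)\|_U$. It then controls the modulus of continuity of $W$ uniformly in $n$ by the Garsia--Rodemich--Rumsey inequality, $\|W(t)-W(t_n)\|_U\le C\tau^{\alpha-1/q}X^{1/q}$ with $\erww{X}<\infty$ for $\alpha<1/2$. After that it only needs three ingredients: the linear growth $\|G(v)\|_{\HS}\le L_{G,2}\|v\|_{L^2(D)}$, Cauchy--Schwarz, and the bound $\erww{\max_n\|\upsn\|_{L^2(D)}^2}\le K_2$ of Lemma \ref{250813_lem1}. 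This is why $K_2$ appears in the statement.

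\textbf{Your route.} You use the fact that each $g_j$ vanishes outside $[0,1]$. Hence $|g_j|\le\|g_j'\|_\infty$, and $\|G(v)\|_{\HS}^2\le|D|L_{G,2}^2$ uniformly in $v$. This lets you apply BDG with exponent $2m$ on each subinterval and replace the maximum by an $\ell^{2m}$-sum. The factor $N=T/\tau$ is then absorbed by $\tau^m$.

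\textbf{What each buys.} Your argument is more elementary: it needs neither GRR nor Lemma \ref{250813_lem1}, and it gives an explicit constant and rate $\gamma=(m-1)/(2m)$. It also handles the Hilbert--Schmidt norm directly through BDG. This avoids the pathwise action of $G(\upsn)$ on $W(t)-W(t_n)$, which needs some care because that increment lies in $U$ rather than in $\mathcal{Q}^{1/2}(U)$. The paper's route, by contrast, only needs linear growth of $G$. It would therefore work for a general Lipschitz $G$ with $\|G(v)\|_{\HS}\le C(1+\|v\|_{L^2(D)})$, using only second moments of $\max_n\|\upsn\|_{L^2(D)}$.

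\textbf{Side remarks.}
\begin{itemize}
\item Your uniform bound comes from $(H_4)$ and $(H_6)$ via Lemma \ref{lemma_G1_G3}, not from $(H_5)$.
\item Your closing claim, that the $\max_n\|\upsn\|_{L^2(D)}^2$ route gives no decay in $\tau$, is mistaken. The paper obtains the decay precisely by putting the higher moments on the Gaussian quantity $X$ rather than on $\upsn$.
\item Matching the constant is harmless. Enlarging $K_2$ is legitimate. Alternatively, since $\gamma$ is free and only large $N$ is required, $C'\tau^{\gamma_0}\le K_2\tau^{\gamma}$ for any $\gamma<\gamma_0$ once $\tau$ is small enough, provided $K_2>0$.
\end{itemize}
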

\begin{proof}
We fix $N\in\na^\star$, $\eps\in\R_+^\star$, and recall that $\widehat{M}_{N,\eps}(T) = M_{N,\eps}(T)$. We have
\begin{align*}
&\sup_{t\in[0,T]}\left\| M_{N,\eps}(t)-\widehat{M}_{N,\eps}(t)\right\|_{L^2(D)}\\
=\ &\sup_{n\in\{0,\ldots,N-1\}}\sup_{t\in [t_n,t_{n+1}]}\left\|\int_{t_n}^t G(\upsn)\,dW(s) - \frac{(t-t_n)}{\tau}\int_{t_n}^{t_{n+1}}G(\upsn)\,dW(s)\right\|_{L^2(D)}\\
\leq\ & \sup_{n\in\{0,\ldots,N-1\}}\sup_{t\in [t_n,t_{n+1}]}\left(\left\|\int_{t_n}^{t} G(\upsn)\,dW(s)\right\|_{L^2(D)}+\left\|\int_{t_n}^{t_{n+1}} G(\upsn)\,dW(s)\right\|_{L^2(D)}\right).
\end{align*}
By remarking that 
\begin{align*}
\left\|\int_{t_n}^{t_{n+1}} G(\upsn)\,dW(s)\right\|_{L^2(D)}
\leq \sup_{t\in [t_n,t_{n+1}]} \left\|\int_{t_n}^{t} G(\upsn)\,dW(s)\right\|_{L^2(D)},
\end{align*}
we have
\begin{align*}
&\sup_{t\in [0,T]}\left\| M_{N,\eps}(t)-\widehat{M}_{N,\eps}(t)\right\|_{L^2(D)}
\leq 2\sup_{n\in\{0,\ldots,N-1\}}\sup_{t\in [t_n,t_{n+1})} \left\|\int_{t_n}^{t} G(\upsn)\,dW(s)\right\|_{L^2(D)}.
\end{align*}
Now, for $n\in \{0,\ldots,N-1\}$ and $t\in [t_n,t_{n+1}]$, 
\begin{align}\label{070426_bc}
&\left\|\int_{t_n}^{t} G(\upsn)\,dW(s)\right\|_{L^2(D)}\leq C \left\| G(\upsn) \right\|_{\HS}\left\| W(t) - W(t_n) \right\|_U
\end{align}
where $C\geq 0$ is a constant that may change from line to line. Moreover, using the Garsia-Rodemich-Rumsey inequality (\cite{GRRR70} and \cite[Exercice 2.4.1]{SV79}) we obtain for any $q\geq1$ and $\alpha > 1/q$,
\begin{align}\label{070426_cc}
&\left\| W(t) - W(t_n) \right\|_U\leq C \tau^{(\alpha -1/q)}X^{\frac{1}{q}}
\end{align}
where $X := \int_0^T\int_0^T \left( \left\| W(s) - W(t_n) \right\|_U^q / |s-z|^{\alpha q + 1} \right)\,ds\,dz$ and $\gamma:=(\alpha -1/q)>0$. Plugging \eqref{070426_cc} into \eqref{070426_bc}, we get
\begin{align}\label{07426_dc}
 &\sup_{n\in\{0,\ldots,N-1\}}\sup_{t\in [t_n,t_{n+1})} \left\|\int_{t_n}^{t} G(\upsn)\,dW(s)\right\|_{L^2(D)}\nonumber\\
 \leq\ & C \sup_{t\in[0,T]} \left\| G(\uhnl(t)) \right\|_{\HS}
\tau^{\gamma} X^{1/q}\nonumber\\
\leq\ & C \max_{n\in \{0,\ldots,N-1\}} \left\| u^n_\eps \right\|_{L^2(D)}\tau^{\gamma} X^{1/q}.
\end{align}
In this manner, taking the expectation in \eqref{07426_dc} and using H\"older's inequality, one gets
\begin{align}\label{260409_01}
\begin{aligned}
&\erwb\sup_{n\in\{0,\ldots,N-1\}}\sup_{t\in [t_n,t_{n+1})} \left\|\int_{t_n}^{t} G(\upsn)\,dW(s)\right\|_{L^2(D)} \erwe\\
\leq \ & C\tau^{\gamma} \erwb \max_{n\in \{0,\ldots,N-1\}} \left\| u^n_\eps \right\|_{L^2(D)}
 X^{1/q}\erwe\\
 \leq \ & C\tau^{\gamma}\erww{\max_{n\in \{0,\ldots,N-1\}} \left\| u^n_\eps \right\|_{L^2(D)}^2}^{1/2}\erww{X^{2/q}}^{1/2}.
 \end{aligned}
 \end{align}
Choosing $q>2$ and using Jensen's inequality, one arrives at
\begin{align}\label{07426_ec}
\erwb X^{2/q}\erwe \leq \left(\erwb X\erwe\right)^{2/q} \leq \left(C \int_0^T\int_0^T |t-r|^{q/2-\alpha q -1}dtdr\right)^{2/q}<+\infty.
\end{align}
Now, the assertion follows by plugging into \eqref{260409_01} the inequality \eqref{07426_ec} and the uniform bound \eqref{250813_lem1_eq} stated in Lemma \ref{250813_lem1}.
\end{proof}
\begin{lem}\label{260409_lem1}
There exists a constant $K_3\geq 0$ not depending on $N\in\na^\star$ and on $\eps\in\R_+^\star$ such that, for $N$ sufficiently large,
\begin{align*}
\erww{\sup_{t\in[0,T]}\left\| M_{N,\eps}(t)-\widehat{M}_{N,\eps}(t)\right\|_{L^2(D)}^2}
\leq K_3.
\end{align*}
\end{lem}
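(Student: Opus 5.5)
We would bound the supremum of the difference by the supremum of each process separately, controlling both by the Burkholder--Davis--Gundy inequality and the Lipschitz bound \eqref{G3_inequality_G_q2}.

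First, on each interval $[t_n,t_{n+1}]$ the process $\widehat{M}_{N,\eps}(t)$ is a convex combination of $M^n_\eps$ and $M^{n+1}_\eps$, so
\begin{align*}
\sup_{t\in[0,T]}\|\widehat{M}_{N,\eps}(t)\|_{L^2(D)}\leq \max_{n\in\{0,\ldots,N\}}\|M_{N,\eps}(t_n)\|_{L^2(D)}\leq \sup_{t\in[0,T]}\|M_{N,\eps}(t)\|_{L^2(D)}.
\end{align*}
Using $(a+b)^2\leq 2a^2+2b^2$, this gives
\begin{align*}
\erww{\sup_{t\in[0,T]}\left\| M_{N,\eps}(t)-\widehat{M}_{N,\eps}(t)\right\|_{L^2(D)}^2}\leq 4\,\erww{\sup_{t\in[0,T]}\|M_{N,\eps}(t)\|_{L^2(D)}^2}.
\end{align*}

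Next, $M_{N,\eps}$ is the It\^o integral of the predictable process $G(\uhnl)\in L^2_{\mathcal{P}_T}(\Omega_T;\HS)$, by Lemma \ref{boundguhnlr_and_betauhnlr}. It is therefore a continuous square-integrable $L^2(D)$-valued martingale. We apply Doob's maximal inequality (or \cite[Theorem 4.36]{DPZ14}) together with It\^o's isometry:
\begin{align*}
\erww{\sup_{t\in[0,T]}\|M_{N,\eps}(t)\|_{L^2(D)}^2}\leq 4\,\erww{\int_0^T\|G(\uhnl(s))\|_{\HS}^2\,ds}.
\end{align*}

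Finally, \eqref{G3_inequality_G_q2} gives $\|G(v)\|_{\HS}\leq L_{G,2}\|v\|_{L^2(D)}$, since $G(0)=0$ by the support condition in $(H_4)$. Combined with \eqref{uhnboundbis}, the right-hand side above is bounded by $4L_{G,2}^2T\Upsilon$, which is the bound already obtained in the proof of Lemma \ref{boundguhnlr_and_betauhnlr}. Hence the statement holds with $K_3:=64L_{G,2}^2T\Upsilon$, valid for $N$ large enough that Proposition \ref{bounds} applies.

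The only delicate step is the pathwise bound on $\widehat{M}_{N,\eps}$ by the grid values, and it is elementary. Unlike Lemma \ref{251007_l1}, no Garsia--Rodemich--Rumsey argument and no use of Lemma \ref{250813_lem1} is needed, because we only want boundedness and not a rate in $\tau$.
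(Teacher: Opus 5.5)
Your proof is correct and follows essentially the paper's route: reduce to $\erww{\sup_{t\in[0,T]}\|M_{N,\eps}(t)\|_{L^2(D)}^2}$, apply the Doob/Burkholder--Davis--Gundy maximal inequality, and conclude with the $L^2$ bound on $G(\uhnl)$ from Lemma \ref{boundguhnlr_and_betauhnlr}. The only difference is the elementary reduction step. You use that $\widehat{M}_{N,\eps}$ is a convex combination of grid values, whereas the paper bounds the difference by the local increments $\int_{t_n}^t G(\upsn)\,dW(s)$ and splits them as $\int_0^t-\int_0^{t_n}$. Also, your chain of inequalities actually gives $16L_{G,2}^2T\Upsilon$, so $64L_{G,2}^2T\Upsilon$ is valid but not sharp.
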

\begin{proof}
Similarly as in the proof of Lemma \ref{251007_l1} we obtain
\begin{align*}
&\erww{\sup_{t\in[0,T]}\left\| M_{N,\eps}(t)-\widehat{M}_{N,\eps}(t)\right\|_{L^2(D)}^2}\\
\leq\ & 4\erww{\sup_{n\in\{0,\ldots,N-1\}}\sup_{t\in [t_n,t_{n+1}]}\left\Vert \int_{t_n}^t G(\upsn)\,dW(s)\right\Vert_{L^2(D)}^2}
\end{align*}
By integral splitting and using Burkholder-Davis-Gundy inequality in the last step, we find a constant $C>0$ such that
\begin{align*}
&\erww{\sup_{t\in[0,T]}\left\| M_{N,\eps}(t)-\widehat{M}_{N,\eps}(t)\right\|_{L^2(D)}^2}\\
&\leq 8\erww{\sup_{n\in\{0,\ldots,N-1\}}\sup_{t\in [t_n,t_{n+1}]}\left\Vert\int_0^t G(\upsn)\,dW(s)\right\Vert_{L^2(D)}^2+\sup_{n\in\{0,\ldots,N-1\}}\left\Vert\int_0^{t_n} G(\upsn)\,dW(s)\right\Vert_{L^2(D)}^2}\\
&\leq 16\erww{\sup_{t\in [0,T]}\left\Vert\int_0^t G(\uhnl)\,dW(s)\right\Vert_{L^2(D)}^2}\leq C\erww{\int_0^T \left\Vert G(\upsn)\right\Vert_{\HS}^2\,ds}.
\end{align*}
Now, the assertion follows from Lemma \ref{boundguhnlr_and_betauhnlr}.
\end{proof}
\begin{lem}\label{251016_l1}
Given the constant $r\in[1,2)$ of $(H_6)$, if we assume that there exists $\theta\in(0,\frac{4}{r}-2]$ such that $\frac{T}{N}=\mathcal{O}(\eps^{2+\theta})$, then the sequence
$(\partial_t\left(\widehat{u}_{N,\eps}-\widehat{M}_{N,\eps}\right))_{N,\eps}$ is bounded in $L^{p'}(\Omega;L^{p'}(0,T;V^*))$, independently of the discretization and regularization parameters $N\in\na^\star$ and $\eps\in\R_+^\star$, for $N$ sufficiently large.
\end{lem}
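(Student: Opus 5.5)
The plan is to compute $\partial_t(\widehat{u}_{N,\eps}-\widehat{M}_{N,\eps})$ explicitly on each interval $(t_n,t_{n+1})$. We then read it off from the scheme \eqref{240911_01} and bound each resulting term separately in $L^{p'}(\Omega;L^{p'}(0,T;V^*))$.

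On $(t_n,t_{n+1})$ both $\widehat{u}_{N,\eps}$ and $\widehat{M}_{N,\eps}$ are affine in time. Their derivatives are $\frac{\upsnp-\upsn}{\tau}$ and $\frac{M^{n+1}_\eps-M^n_\eps}{\tau}=\frac{1}{\tau}G(\upsn)\Delta_{n+1}W$, respectively. Dividing the scheme \eqref{240911_01} by $\tau$ gives, \Pas\ and for a.e. $t\in(0,T)$,
\begin{align*}
\partial_t\left(\widehat{u}_{N,\eps}-\widehat{M}_{N,\eps}\right)(t)=-\plap\uhnr(t)-\pe(\uhnr(t))+\beta(\uhnr(t))+f^N(t)
\end{align*}
as an identity in $V^*$. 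The stochastic increment, which cannot be controlled pathwise in $L^{p'}$ with the right power of $\tau$, cancels exactly; this is the reason for subtracting $\widehat{M}_{N,\eps}$.

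Next we bound the four terms.
\begin{itemize}
\item For the $p$-Laplacian, Hölder's inequality gives $\|\plap v\|_{V^*}\le \|v\|_V^{p-1}$. Hence $\erww{\int_0^T\|\plap\uhnr\|_{V^*}^{p'}dt}\le \erww{\int_0^T\|\uhnr\|_V^{p}dt}\le K_0$ by Proposition \ref{bounds} and Lemma \ref{210611_lem01}.
\item For the other three terms we use the continuous embedding $L^2(D)\hookrightarrow V^*$ together with $p'\le 2$ (since $p\ge 2$). Indeed, $V\hookrightarrow L^p(D)\hookrightarrow L^2(D)$ on the bounded domain $D$, so $\|v\|_{V^*}\le C\|v\|_{L^2(D)}$.
\item Since $\Omega_T$ has finite measure, $L^2(\Omega_T;L^2(D))$ embeds continuously into $L^{p'}(\Omega_T;V^*)$.
\item Consequently the bounds on $\pe(\uhnr)$ from Lemma \ref{pebounds}, on $\beta(\uhnr)$ from Lemma \ref{boundguhnlr_and_betauhnlr}, and on $f^N$ from Lemma \ref{CVfhnl} (where $\|f^N\|\le\|f\|$ by Jensen) transfer directly.
\end{itemize}

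The only step needing care is $\pe(\uhnr)$. Its uniform-in-$\eps$ bound requires the coupling hypothesis $\tau=\mathcal{O}(\eps^{2+\theta})$ through Proposition \ref{boundpe}, which is why that hypothesis appears in the statement. We also need $N$ large enough that $\tau\le\frac34\frac{1}{1+2L_\beta}$, so that Proposition \ref{bounds} applies.

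Finally, a minor technical point: the resulting function must be a measurable $V^*$-valued process. This holds since $\uhnr$ is piecewise constant in time with $V$-valued random variables as values, so the boundedness in $L^{p'}(\Omega;L^{p'}(0,T;V^*))$ follows.
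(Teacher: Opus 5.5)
Your proposal is correct and follows the paper's proof essentially step by step. You identify $\partial_t(\widehat{u}_{N,\eps}-\widehat{M}_{N,\eps})=-\plap\uhnr-\pe(\uhnr)+\beta(\uhnr)+f^N$ from the scheme, bound the $p$-Laplace term via H\"older by $\|\uhnr\|_V^{p-1}$, and control the remaining terms through $L^2(D)\hookrightarrow V^*$, $p'\le 2$ and Lemmas \ref{pebounds}, \ref{boundguhnlr_and_betauhnlr} and \ref{CVfhnl}. The differences are cosmetic: your finite-measure H\"older embedding replaces the paper's splitting over $\{\|v\|_{L^2(D)}\ge 1\}$, and applying the triangle inequality to the norm, rather than summing $p'$-th powers, avoids the missing factor $4^{p'-1}$ in the paper's display.
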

\begin{proof}
We adapt here the arguments of \cite[Lemma 5]{VZ19} (see also \cite[Lemma 2.5]{VZ21}).
Let $n \in \{0,\ldots,N-1\}$. From \eqref{240911_01} the following time discrete equation holds in $V^*$:
for all $t \in [t_n, t_{n+1})$,
\begin{align*}
\partial_t\left(\widehat{u}_{N,\eps}-\widehat{M}_{N,\eps}\right)(t)
= \frac{\upsnp - \upsn - G(\upsn) \Delta_{n+1} W}{\tau}
= - \plap \upsnp - \pe(\upsnp) + \beta(\upsnp) + f_n.
\end{align*}
Hence on $[0,T)$, we have
\begin{align*}
\partial_t\left(\widehat{u}_{N,\eps}-\widehat{M}_{N,\eps}\right)
& = \sum_{n=0}^{N-1} \left( - \plap \upsnp - \pe(\upsnp) + \beta(\upsnp) + f_n \right) \mathds{1}_{[t_n,t_{n+1})}\\
& = - \plap \uhnr - \pe(\uhnr) + \beta(\uhnr) + f^N.
\end{align*}
By using the triangle inequality of the $L^{p'}(\Omega;L^{p'}(0,T;V^*))$ norm, we arrive at
\begin{align}\label{triangle_ineq}
&\left\| \partial_t\left(\widehat{u}_{N,\eps}-\widehat{M}_{N,\eps}\right)\right\|_{L^{p'}(\Omega;L^{p'}(0,T;V^*))}^{p'}\nonumber\\
 =\ & \erwb \int_0^T \left\| - \plap \uhnr(t) - \pe(\uhnr(t)) + \beta(\uhnr(t)) + f^N(t) \right\|_{V^*}^{p'}\,dt \erwe\nonumber\\
 \leq\ & \erwb \int_0^T \left\| \plap \uhnr(t) \right\|_{V^*}^{p'}\,dt \erwe
+ \erwb \int_0^T \left\| \pe(\uhnr(t)) \right\|_{V^*}^{p'}\,dt \erwe\nonumber\\
& + \erwb \int_0^T \left\| \beta(\uhnr(t)) \right\|_{V^*}^{p'}\,dt \erwe
+ \erwb \int_0^T \left\| f^N(t) \right\|_{V^*}^{p'}\,dt \erwe.
\end{align}
For the $p$-Laplace term, we use the same method as in the proof of \cite[Lemma 5]{VZ19}.
For all $n \in \{0,\ldots,N-1\}$, by definition we have
\begin{align*}
&\| \plap \upsnp \|_{V^*} = \sup_{\phi \in V,\|\phi\|_V\leq1}
\left| \langle \plap \upsnp, \phi \rangle_{V^*,V} \right|\\
=& \sup_{\phi \in V,\|\phi\|_V\leq1}
\left| \int_D \Big(
|\nabla \upsnp(x)|^{p-2} \nabla \upsnp(x) \cdot \nabla \phi(x)
+ |\upsnp(x)|^{p-2} \upsnp(x) \phi(x) \Big)
\,dx \right|\\
\leq& \sup_{\phi \in V,\|\phi\|_V\leq1}
\left| \int_D |\nabla \upsnp(x)|^{p-2} \nabla \upsnp(x) \cdot \nabla \phi(x)\,dx \right|\nonumber\\
&+ \sup_{\phi \in V,\|\phi\|_V\leq1}
\left| \int_D |\upsnp(x)|^{p-2} \upsnp(x) \phi(x)\,dx \right|.
\end{align*}
Let $\phi \in V$ be such that $\|\phi\|_V\leq1$. By applying H\"older's inequality, we obtain
\begin{align*}
\left|\int_D |\nabla \upsnp(x)|^{p-2} \nabla \upsnp(x) \cdot \nabla \phi(x)\,dx\right|
& \leq\int_D |\nabla \upsnp(x)|^{p-1} |\nabla \phi(x)|\,dx\\
& \leq \big\| |\nabla \upsnp|^{p-1} \big\|_{L^{p'}(D)} \| \nabla \phi \|_{\left(L^p(D)\right)^d}\\
& \leq \big\| \nabla \upsnp \|_{\left(L^p(D)\right)^d}^{p-1} \| \nabla \phi \|_{\left(L^p(D)\right)^d}.
\end{align*}
Similarly, we have
\begin{align*}
\left|\int_D |\upsnp(x)|^{p-2} \upsnp(x) \phi(x)\,dx\right|
& \leq \| \upsnp \|_{L^p(D)}^{p-1} \| \phi \|_{L^p(D)},
\end{align*}
and so
\begin{align*}
\| \plap \upsnp \|_{V^*}
& \leq \sup_{\phi \in V,\|\phi\|_V\leq1}
\| \nabla \upsnp \|_{\left(L^p(D)\right)^d}^{p-1} \| \nabla \phi \|_{\left(L^p(D)\right)^d}
+ \sup_{\phi \in V,\|\phi\|_V\leq1}
\| \upsnp \|_{L^p(D)}^{p-1} \| \phi \|_{L^p(D)}\\
& \leq \| \nabla \upsnp \|_{\left(L^p(D)\right)^d}^{p-1}
+ \| \upsnp \|_{L^p(D)}^{p-1}\\
&\leq 2 \| \upsnp \|_V^{p-1}.
\end{align*}
Then
$\| \plap \upsnp \|_{V^*}^{p'} \leq 2^{p'} \| \upsnp \|_V^p$
and finally, using the fact that $p'(p-1)=p$, we get
\begin{align}\label{plap_ineq}
\erwb \int_0^T \left\| \plap \uhnr(t) \right\|_{V^*}^{p'}\,dt \erwe
& = \erwb \sum_{n=0}^{N-1} \int_{t_n}^{t_{n+1}} \left\| \plap \upsnp \right\|_{V^*}^{p'}\,dt \erwe\nonumber\\
&\leq 2^{p'} \erwb \sum_{n=0}^{N-1} \int_{t_n}^{t_{n+1}} \| \upsnp \|_V^p\,dt \erwe\nonumber\\
& \leq 2^{p'} \| \uhnr \|_{L^p(\Omega;L^p(0,T;V))}^p.
\end{align}
For the other terms, related to $\pe(\uhnr)$, $\beta(\uhnr)$ and $f^N$, we use a common argument. The Gelfand triple $(V,L^2(D),V^*)$ gives existence to a constant $C_{V^*} > 0$ such that for any $v$ in $L^2(\Omega_T;L^2(D))$,
\begin{align*}
\erwb \int_0^T \left\| v(t) \right\|_{V^*}^{p'}\,dt \erwe
\leq C_{V^*}^{p'} \erwb \int_0^T \left\| v(t) \right\|_{L^2(D)}^{p'}\,dt \erwe.
\end{align*}
Note that since $p \geq 2$, then $1 < p' \leq 2$, and thus for all $v$ in $L^2(\Omega_T;L^2(D))$,
\begin{align}\label{gelfand_ineq}
\erwb \int_0^T \left\| v(t) \right\|_{V^*}^{p'}\,dt \erwe
\leq & C_{V^*}^{p'}\erwb \int_0^T \left\| v(t) \right\|_{L^2(D)}^{p'}\,dt \erwe\nonumber\\ 
=& C_{V^*}^{p'}\int_{\Omega}\int_0^T \left\| v(\omega,t) \right\|_{L^2(D)}^{p'} \mathds{1}_{\left\{(\omega,t)\in \Omega\times (0,T) : \left\| v(\omega, t) \right\|_{L^2(D)}\geq 1\right\}}(\omega,t)\,dt\,d\mathds{P}(\omega)\nonumber\\
&+C_{V^*}^{p'}\int_{\Omega}\int_0^T \left\| v(\omega,t) \right\|_{L^2(D)}^{p'} \mathds{1}_{\left\{(\omega,t)\in \Omega\times (0,T) : \left\| v(\omega,t) \right\|_{L^2(D)}< 1\right\}}(\omega,t)\,dt\,d\mathds{P}(\omega)\nonumber\\
\leq & C_{V^*}^{p'} \left(\left\| v \right\|_{L^2(\Omega_T;L^2(D))}^2+T\right).
\end{align}
By combining \eqref{plap_ineq} with the inequality \eqref{gelfand_ineq} applied independently to $\pe(\uhnr)$, $\beta(\uhnr)$ and $f^N$, we arrive from \eqref{triangle_ineq} to
\begin{align*}
\left\| \partial_t\left(\widehat{u}_{N,\eps}-\widehat{M}_{N,\eps}\right)\right\|_{L^{p'}(\Omega;L^{p'}(0,T;V^*))}^{p'}
\leq& \ 2^{p'} \| \uhnr \|_{L^p(\Omega;L^p(0,T;V))}^p
+ C_{V^*}^{p'} \left\| \pe(\uhnr) \right\|_{L^2(\Omega_T;L^2(D))}^2\\
& + C_{V^*}^{p'} \left(
\left\| \beta(\uhnr) \right\|_{L^2(\Omega_T;L^2(D))}^2
+ \left\| f^N \right\|_{L^2(\Omega_T;L^2(D))}^2
+ 3 T
\right)
\end{align*}
and the proof is complete with the boundedness results shown in Lemmas \ref{210611_lem01}, \ref{boundguhnlr_and_betauhnlr}, \ref{CVfhnl} and \ref{pebounds}.
\end{proof}
\begin{lem}\label{buhnrppanp}
Given the constant $r\in[1,2)$ of $(H_6)$, if we assume that there exists \\ $\theta\in(0,\frac{4}{r}-2]$ such that $\frac{T}{N}=\mathcal{O}(\eps^{2+\theta})$, then the sequences
$\left(\frac{(\uhnr)^-}{\eps}\right)_{N,\eps}$ and $\left(\frac{(\uhnr-1)^+}{\eps}\right)_{N,\eps}$ are bounded in $L^2(\Omega;L^2(0,T;L^2(D)))$, independently of the discretization and regularization parameters $N\in\na^\star$ and $\eps\in\R_+^\star$, for $N$ sufficiently large.
\end{lem}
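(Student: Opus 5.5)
The plan is to reduce the statement to the bound on $\pe(\uhnr)$ already provided by Lemma \ref{pebounds}, through a pointwise identity. By the definition \eqref{SPPR},
\begin{equation*}
\pe(v)=-\frac{v^-}{\eps}+\frac{(v-1)^+}{\eps},\qquad v\in\R .
\end{equation*}
The two pieces have disjoint supports: $v^-\neq 0$ forces $v<0$, while $(v-1)^+\neq 0$ forces $v>1$. Consequently, for every $v\in\R$,
\begin{equation*}
|\pe(v)|^2=\frac{(v^-)^2}{\eps^2}+\frac{((v-1)^+)^2}{\eps^2},
\end{equation*}
and in particular both $\frac{v^-}{\eps}\le|\pe(v)|$ and $\frac{(v-1)^+}{\eps}\le|\pe(v)|$ hold.

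I will apply this identity with $v=\uhnr(\omega,t,x)$ a.e. in $\Omega\times(0,T)\times D$, then integrate over $D$, over $(0,T)$, and take the expectation. This gives
\begin{equation*}
\Big\|\tfrac{(\uhnr)^-}{\eps}\Big\|_{L^2(\Omega;L^2(0,T;L^2(D)))}^2+\Big\|\tfrac{(\uhnr-1)^+}{\eps}\Big\|_{L^2(\Omega;L^2(0,T;L^2(D)))}^2=\|\pe(\uhnr)\|_{L^2(\Omega;L^2(0,T;L^2(D)))}^2 .
\end{equation*}

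The right-hand side equals $\sum_{k=0}^{N-1}\tau\,\erww{\|\pe(\upskp)\|_{L^2(D)}^2}$, by the piecewise-constant definition of $\uhnr$. Under the standing hypothesis $\frac{T}{N}=\mathcal{O}(\eps^{2+\theta})$ with $\theta\in(0,\frac{4}{r}-2]$ and $N$ large enough, this quantity is bounded by $K_1$, by Proposition \ref{boundpe} (equivalently, Lemma \ref{pebounds}).

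It remains to check measurability. The maps $v\mapsto v^-$ and $v\mapsto (v-1)^+$ are Lipschitz, so composing them with $\uhnr$ gives measurable processes with values in $L^2(D)$.

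The only point needing care is the disjoint-support identity. It is elementary, and I do not expect any real obstacle. Should one want the same bound for $\uhnl$, the identical argument works using $\pe(\uhnl)$.
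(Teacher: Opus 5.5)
Your proposal is correct and follows essentially the same approach as the paper. The paper's proof also uses the decomposition $\pe(\uhnr)=-\frac{(\uhnr)^-}{\eps}+\frac{(\uhnr-1)^+}{\eps}$ together with $(\uhnr)^-\times(\uhnr-1)^+=0$ to turn $\|\pe(\uhnr)\|^2$ into the sum of the two squared norms, and then bounds that quantity via Lemma \ref{pebounds} (equivalently Proposition \ref{boundpe}).
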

\begin{proof} Since $\di\pe(\uhnr)=-\frac{(\uhnr)^-}{\eps}+\frac{(\uhnr-1)^{+}}{\eps}$ and $(\uhnr)^-\times(\uhnr-1)^+=0$, one gets that
\begin{equation*}
\left\|-\frac{(\uhnr)^-}{\eps} \right\|_{L^2(\Omega;L^2(0,T;L^2(D)))}^2+\left\|\frac{(\uhnr-1)^+}{\eps} \right\|_{L^2(\Omega;L^2(0,T;L^2(D)))}^2\\
=\big\|\pe(\uhnr)\big\|_{L^2(\Omega;L^2(0,T;L^2(D)))}^2,
\end{equation*}
As by Lemma \ref{pebounds} the right-hand side is bounded, the final result holds.
\end{proof}
One can prove the following, with the same method:
\begin{lem}\label{buhnlppanp}
Given the constant $r\in[1,2)$ of $(H_6)$, if we assume that there exists \\ $\theta\in(0,\frac{4}{r}-2]$ such that $\frac{T}{N}=\mathcal{O}(\eps^{2+\theta})$, then the sequences
$\left(\frac{(\uhnl)^-}{\eps}\right)_{N,\eps}$ and $\left(\frac{(\uhnl-1)^+}{\eps}\right)_{N,\eps}$ are bounded in $L^2_{\mathcal{P}_T}\big(\Omega_T;L^2(D)\big)$ independently of the discretization and regularization parameters $N\in\na^\star$ and $\eps\in\R_+^\star$, for $N$ sufficiently large.
\end{lem}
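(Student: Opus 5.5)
We mimic the proof of Lemma \ref{buhnrppanp}, now working with the left-continuous approximation $\uhnl$ and its discrete values $\upsn$ for $n\in\{0,\ldots,N-1\}$. We start from the pointwise identity
\begin{equation*}
\pe(\uhnl)=-\frac{(\uhnl)^-}{\eps}+\frac{(\uhnl-1)^+}{\eps}.
\end{equation*}
Since $(\uhnl)^-\,(\uhnl-1)^+=0$ a.e. in $\Omega\times(0,T)\times D$, squaring and integrating gives
\begin{equation*}
\left\|\frac{(\uhnl)^-}{\eps}\right\|_{L^2(\Omega_T;L^2(D))}^2+\left\|\frac{(\uhnl-1)^+}{\eps}\right\|_{L^2(\Omega_T;L^2(D))}^2=\big\|\pe(\uhnl)\big\|_{L^2(\Omega_T;L^2(D))}^2 .
\end{equation*}
By Lemma \ref{pebounds}, the right-hand side is bounded independently of $N$ and $\eps$, for $N$ sufficiently large, under the coupling $\frac{T}{N}=\mathcal{O}(\eps^{2+\theta})$.

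It is worth checking directly why $\pe(\uhnl)$ is bounded, since $\uhnl$ contains the index $n=0$ term, which Proposition \ref{boundpe} does not cover. We have
\begin{equation*}
\|\pe(\uhnl)\|_{L^2(\Omega_T;L^2(D))}^2=\tau\sum_{n=0}^{N-1}\erww{\|\pe(\upsn)\|_{L^2(D)}^2}.
\end{equation*}
The $n=0$ term vanishes because $\upsz=u_0\in[0,1]$ a.e. by $(H_1)$. The remaining terms are dominated by $\sum_{k=0}^{N-1}\tau\erww{\|\pe(\upskp)\|_{L^2(D)}^2}\le K_1$ from Proposition \ref{boundpe}. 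This index shift, together with $\pe(u_0)=0$, is the only point that differs from the $\uhnr$ case, and it is where I would be careful.

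Predictability is inherited by composition. The maps $v\mapsto v^-/\eps$ and $v\mapsto (v-1)^+/\eps$ are Lipschitz on $\R$, so they induce continuous maps on $L^2(D)$. Composed with the predictable $L^2(D)$-valued elementary process $\uhnl$ from Lemma \ref{210611_lem01}, they give predictable processes. Therefore the bounds hold in $L^2_{\mathcal{P}_T}\big(\Omega_T;L^2(D)\big)$.
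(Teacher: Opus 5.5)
Your proposal is correct and follows the paper's route. The paper proves this lemma ``with the same method'' as Lemma~\ref{buhnrppanp}: the identity $(\uhnl)^-(\uhnl-1)^+=0$ reduces the claim to the bound on $\pe(\uhnl)$ from Lemma~\ref{pebounds}, and predictability is inherited from $\uhnl$. Your explicit check of the index shift is correct and is a detail the paper leaves implicit in Lemma~\ref{pebounds}: the $n=0$ term vanishes since $\pe(u_0)=0$ by $(H_1)$, and the remaining terms are controlled by $K_1$ from Proposition~\ref{boundpe}. One harmless slip: $\uhnl$ is right-continuous, not left-continuous, and this plays no role in the argument.
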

\begin{rem}\label{remlimcomu}
Using Lemma \ref{buhnrppanp}, Lemma \ref{buhnlppanp} and by expanding the square term of \eqref{controldiffpe(uhnl)pe(uhnr)}, one can prove that
\begin{align*}
\left\|\frac{(\uhnr)^-}{\eps}-\frac{(\uhnl)^-}{\eps} \right\|_{L^2(\Omega;L^2(0,T;L^2(D)))}^2
&\leq \frac{\tau}{\eps^2}K_0\\
\text{ and } \left\|\frac{(\uhnr-1)^+}{\eps}-\frac{(\uhnl-1)^+}{\eps} \right\|_{L^2(\Omega;L^2(0,T;L^2(D)))}^2
&\leq \frac{\tau}{\eps^2}K_0,
\end{align*}
which ensures that if the sequences $\left(\frac{(\uhnr)^-}{\eps}\right)_{\eps}$ and $\left(\frac{(\uhnl)^-}{\eps}\right)_{\eps}$ (respectively $\left(\frac{(\uhnr-1)^+}{\eps}\right)_\eps$ and $\left(\frac{(\uhnl-1)^+}{\eps}\right)_\eps$) converge strongly or weakly in $L^2(\Omega;L^2(0,T;L^2(D)))$, it is necessarily towards a common limit.
\end{rem}
\section{Proof of Theorem \ref{main}}\label{Section4}
In this section, owing to the stability estimates proved in the previous section, we will show existence of weak limits for the sequences listed in \eqref{040526_a}. Thanks to these weak convergence results, we will be able to pass to the limit in our time-discretization scheme and prove that such limits satisfy a \enquote{limit equation}. Then, the idenfication of weak limits coming from the discretization of non-linear terms of Problem\eqref{equation} will allow us to conclude that there exists a pair of solutions for Problem\eqref{equation} in the sense of Definition \ref{solution}. Finally, the remaining of this section will be then devoted to the proof of the uniqueness of such a pair of solution.\\

In what follows, let $(N_m)_{m\in\na} \subset\na^\star$ be a sequence with $\lim_{m\to\infty} N_m=+\infty$, set $\tau_m:=\frac{T}{N_m}$, and let $(\eps_m)_{m\in\na}\subset\R_+^\star$ be another sequence such that $\lim_{m\to\infty} \eps_m=0$. Moreover, given the constant $r\in[1,2)$ of $(H_6)$, for any $\theta \in (0,\frac{4}{r}-2]$, assume that for any $m\in\na$, $\tau_m=\mathcal{O}(\eps_m^{2+\theta})$. For any $m\in\na$, when the $m$-dependence is not required for the understanding of the main arguments, we shall use the notations $\tau:=\tau_m$, $N:=N_m$ and $\eps:=\eps_m$.
\subsection{Existence of weak limits}
\begin{lem}\label{addreg u}
There exists $u\in L^p_{\mathcal{P}_T}\big(\Omega_T;V\big)$ such that, up to subsequences denoted in the same way, the sequences $(\uhnr)_m$, $(\uhnl)_m$ and $(\widehat{u}_{N,\eps})_m$ weakly converge towards $u$ in $L^p(\Omega_T;V)$, $L^2_{\mathcal{P}_T}(\Omega_T;L^2(D))$, and in $L^2(\Omega_T;L^2(D))$, respectively as $m\to\infty$.
\end{lem}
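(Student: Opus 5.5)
The plan is to extract weakly convergent subsequences from the uniform bounds of Section~\ref{estimates}, and then to show that all three limits coincide.

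\textbf{Step 1: extraction.} By Lemma~\ref{210611_lem01}, $(\uhnr)_m$ is bounded in $L^p(\Omega;L^p(0,T;V))\cong L^p(\Omega_T;V)$. This space is reflexive, since $V=W^{1,p}(D)$ is reflexive and separable for $p\geq 2$. Hence there are a subsequence and some $u\in L^p(\Omega_T;V)$ with $\uhnr\rightharpoonup u$ in $L^p(\Omega_T;V)$.

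By the same lemma, $(\uhnl)_m$ is bounded in the Hilbert space $L^2_{\mathcal{P}_T}(\Omega_T;L^2(D))$. This is a closed subspace of $L^2(\Omega_T;L^2(D))$, hence weakly closed. A further subsequence therefore converges weakly to some $\tilde u$ in that space, and $\tilde u$ is predictable.

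For $\widehat u_{N,\eps}$ we use the pointwise bound $\|\widehat u_{N,\eps}(t)\|_{L^2(D)}\le \|\upsn\|_{L^2(D)}+\|\upsnp\|_{L^2(D)}$ on $[t_n,t_{n+1})$. Together with \eqref{uhnboundbis}, it gives boundedness in $L^2(\Omega_T;L^2(D))$, so a further subsequence converges weakly to some $\hat u$.

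\textbf{Step 2: identification of the limits.} From \eqref{210824_05} we have $\|\uhnr-\uhnl\|_{L^2(\Omega_T;L^2(D))}^2\le K_0\tau\to0$. Moreover, on each $[t_n,t_{n+1})$,
\[
\widehat u_{N,\eps}-\uhnr=\Bigl(\frac{t-t_n}{\tau}-1\Bigr)(\upsnp-\upsn),
\]
so $\|\widehat u_{N,\eps}-\uhnr\|^2_{L^2(\Omega_T;L^2(D))}\le K_0\tau\to0$ as well.

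The continuous embedding $L^p(\Omega_T;V)\hookrightarrow L^2(\Omega_T;L^2(D))$ holds because $p\ge2$ and the measure spaces are finite. Weak convergence is preserved under it, so $\uhnr\rightharpoonup u$ also in $L^2(\Omega_T;L^2(D))$. Uniqueness of weak limits in $L^2(\Omega_T;L^2(D))$, combined with the two strong estimates above, then yields $\tilde u=u=\hat u$.

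\textbf{Step 3: predictability with values in $V$ (the main obstacle).} From Step 2 we know that $u=\tilde u$ is $\mathcal{P}_T$-measurable as an $L^2(D)$-valued map, but only that it lies in $L^p(\Omega_T;V)$, i.e.\ that it is measurable with respect to the product $\sigma$-field. This is the point of Remark~\ref{240527_01}.

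I would use Pettis' theorem. Since $V$ is separable, it suffices to show weak $\mathcal{P}_T$-measurability, namely that $(\omega,t)\mapsto\langle \ell,u(\omega,t)\rangle_{V^*,V}$ is $\mathcal{P}_T$-measurable for every $\ell$ in a countable separating subset of $V^*$. Take the functionals $\ell_h=(h,\cdot)_{L^2(D)}$ with $h$ in a countable dense subset of $L^2(D)$. They separate points of $V$, because $V\hookrightarrow L^2(D)$ is injective. Each $(h,u)_{L^2(D)}$ is $\mathcal{P}_T$-measurable by the $L^2(D)$-predictability of $u$.

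To pass from this separating family to full measurability, I would use the fact that the Borel $\sigma$-field of a separable Banach space is generated by any countable separating family of continuous linear functionals. Alternatively, one can argue that $V$ is a Borel subset of $L^2(D)$ and that its Borel sets are traces of Borel sets of $L^2(D)$ (Lusin–Souslin, since the injection is continuous and injective). Either route gives $u\in L^p_{\mathcal{P}_T}(\Omega_T;V)$, which completes the proof.
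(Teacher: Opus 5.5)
Your proposal is correct and follows essentially the same route as the paper. It extracts weak limits from the bounds of Lemma~\ref{210611_lem01}, identifies the three limits via the $\mathcal{O}(\tau)$ estimates coming from Proposition~\ref{bounds} (you verify the $\widehat{u}_{N,\eps}$ estimate directly where the paper cites an external lemma), and obtains predictability with values in $V$ from Pettis' theorem and the separability of $V$. Your Step~3 only adds detail the paper omits on why $L^2(D)$-predictability gives $\mathcal{P}_T$-measurability with values in $V$.
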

\begin{proof} 
The convergence in $L^2(\Omega_T;L^2(D))$ to possibly distinct limits comes from the \textit{a priori} bounds on the discrete solutions of Lemma \ref{210611_lem01}. From \eqref{210824_05} it follows that the weak limits of $(\uhnr)_m$ and $(\uhnl)_m$ must coincide. Then, from the boundedness of $(\nabla \uhnr)_m$ in $L^p(\Omega_T;(L^p(D))^d)$ and the boundedness of $(\uhnr)_m$ in $L^p(\Omega_T;L^p(D))$ provided by Lemma \ref{210611_lem01}, the rest of the convergence results hold true. We remark that since $p>2$, $L^p(\Omega_T;V)$ is continuously embedded into $L^2(\Omega_T;L^2(D))$.
The predictability of $u$ is given \textit{a priori} with values in $L^2(D)$. Since this implies weak measurability with respect to $\mathcal{P}_T$ with values in $V$, by the Pettis measurability theorem (see, \textit{e.g.,} \cite[Theorem, 1.1.6 and Corollary 1.1.8]{HNVW16}) and the separability of $V$ it follows that $u$ is strongly measurable with respect to $\mathcal{P}_T$ with values in $V$.
It is an immediate consequence of Proposition \ref{bounds} (see, \textit{e.g.,} \cite[Lemma 7]{VZ19}) combined with Lemma \ref{210611_lem01}, that $(\widehat{u}_{N,\eps})_m$ defined in Definition \ref{piecewiseconstantbis} also weakly converges in $L^2(\Omega_T;L^2(D))$ towards $u$.
\end{proof}
\begin{lem}\label{251006_l1}
There exist $\vec{z}\in L^{p'}(\Omega_T;(L^{p'}(D))^d)$ and $y\in L^{p'}(\Omega_T;L^{p'}(D))$ such that, up to not relabeled subsequences, $(|\nabla \uhnr|^{p-2}\nabla \uhnr)_m$ converges to $\vec{z}$ weakly in $L^{p'}(\Omega_T;(L^{p'}(D))^d)$, and $(|\uhnr|^{p-2}\uhnr)_m$ converges to $y$ weakly in $L^{p'}(\Omega_T;L^{p'}(D))$, as $m\to\infty$.
\end{lem}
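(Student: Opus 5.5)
The plan is a direct uniform $L^{p'}$ bound followed by weak sequential compactness in reflexive spaces. Since $p\geq 2$, we have $1<p'\leq 2$, so $L^{p'}(\Omega_T;(L^{p'}(D))^d)$ and $L^{p'}(\Omega_T;L^{p'}(D))$ are reflexive Banach spaces (Lebesgue–Bochner spaces over a finite measure space with values in reflexive spaces, and in fact isometric to $L^{p'}(\Omega_T\times D)$ for the product measure). Every bounded sequence in them therefore admits a weakly convergent subsequence, by the Eberlein–Šmulian theorem. It thus suffices to bound both sequences uniformly in $m$.

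For the gradient part, we use the identity $p'(p-1)=p$ pointwise in $\Omega_T\times D$:
\begin{align*}
\erww{\int_0^T\int_D \big||\nabla \uhnr|^{p-2}\nabla \uhnr\big|^{p'}\,dx\,dt}
=\erww{\int_0^T\int_D |\nabla \uhnr|^{p}\,dx\,dt}
\leq \erww{\int_0^T \|\uhnr\|_V^p\,dt}.
\end{align*}
By Lemma \ref{210611_lem01} (equivalently, Proposition \ref{bounds}), the right-hand side is at most $K_0$, uniformly in $m$ for $m$ large. This requires $\tau_m\leq \frac34\frac{1}{1+2L_\beta}$, which holds eventually since $N_m\to\infty$. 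The same computation gives $\erww{\int_0^T\int_D ||\uhnr|^{p-2}\uhnr|^{p'}\,dx\,dt}=\erww{\int_0^T\|\uhnr\|_{L^p(D)}^p\,dt}\leq K_0$.

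Measurability is the only point needing any care. Each $\uhnr$ is a finite sum of $\mathcal F$-measurable $V$-valued random variables multiplied by time indicators, so it is strongly measurable on $\Omega_T$ with values in $V$. The maps $v\mapsto |\nabla v|^{p-2}\nabla v$ from $V$ to $(L^{p'}(D))^d$ and $v\mapsto |v|^{p-2}v$ from $V$ to $L^{p'}(D)$ are continuous Nemytskii operators. Hence both sequences are strongly measurable and genuinely lie in the stated Bochner spaces.

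We extract the subsequences in two steps: first one along which $(|\nabla \uhnr|^{p-2}\nabla \uhnr)_m$ converges weakly to some $\vec z$, then a further one along which $(|\uhnr|^{p-2}\uhnr)_m$ converges weakly to some $y$. Both are taken inside the subsequence already fixed in Lemma \ref{addreg u}, so that all earlier convergences remain valid. We expect no real obstacle. The identification $\vec z=|\nabla u|^{p-2}\nabla u$, $y=|u|^{p-2}u$ is not claimed here and is deferred to a later monotonicity (Minty) argument.
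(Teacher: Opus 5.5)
Your proposal is correct and follows essentially the same route as the paper: the identity $(p-1)p'=p$ turns the $L^{p'}$ norms of $|\nabla \uhnr|^{p-2}\nabla\uhnr$ and $|\uhnr|^{p-2}\uhnr$ into the $L^p$ norms of $\nabla\uhnr$ and $\uhnr$. Those are bounded uniformly by Lemma \ref{210611_lem01}, and weak sequential compactness in the reflexive $L^{p'}$ spaces then yields $\vec z$ and $y$; your added remarks on measurability via the continuous Nemytskii maps and on nesting the subsequences inside the one from Lemma \ref{addreg u} are details the paper leaves implicit, and they are sound.
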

\begin{proof}
By recalling that
\begin{align*}
\left\||\nabla\uhnr|^{p-2}\nabla\uhnr\right\|_{L^{p'}(\Omega_T;(L^{p'}(D))^d)}^{p'} &= \|\nabla \uhnr\|_{L^p(\Omega_T;(L^p(D))^d)}^p\\
\text{and } \left\| |\uhnr|^{p-2}\uhnr \right\|_{L^{p'}(\Omega_T;L^{p'}(D))}^{p'} &= \| \uhnr \|_{L^p(\Omega_T;L^p(D))}^p,
\end{align*}
the existence of $\vec{z}$ and $y$ follows from the \textit{a priori} bounds on the $L^p$-norms of $(\nabla \uhnr)_m$ and $(\uhnr)_m$ provided by Lemma \ref{210611_lem01}.
\end{proof}
\begin{lem}\label{251016_l3}
For any $t\in [0,T]$, there exists an element $\chi_t\in L^2(\Omega\times D)$, and a subsequence $(\widehat{u}_{N,\eps}(t))_{m_t}$ of $(\widehat{u}_{N,\eps}(t))_{m}$, that may \textit{a priori} depend on $t$, such that $(\widehat{u}_{N,\eps}(t))_{m_t}$ converges to $\chi_t$ weakly in $L^2(\Omega\times D)$ as $m_t\to\infty$.
\end{lem}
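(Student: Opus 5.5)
The plan is to show that, for each fixed $t\in[0,T]$, the family $(\widehat{u}_{N,\eps}(t))_m$ is bounded in the Hilbert space $L^2(\Omega\times D)\cong L^2(\Omega;L^2(D))$. Once this holds, weak sequential compactness of bounded sets in a Hilbert space gives a subsequence $(m_t)$ and a limit $\chi_t\in L^2(\Omega\times D)$. Both may depend on $t$, which is exactly what the statement allows. The argument is therefore a uniform pointwise-in-time $L^2$ bound followed by the Banach--Alaoglu / Eberlein--\v{S}mulian property.

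For the bound, fix $t\in[0,T)$ and let $n\in\{0,\ldots,N-1\}$ be the index with $t\in[t_n,t_{n+1})$. By Definition \ref{piecewiseconstantbis},
\[
\widehat{u}_{N,\eps}(t)=\Big(1-\tfrac{t-t_n}{\tau}\Big)\upsn+\tfrac{t-t_n}{\tau}\,\upsnp .
\]
This is a convex combination of $\upsn$ and $\upsnp$, since $\frac{t-t_n}{\tau}\in[0,1)$. Convexity of $\|\cdot\|_{L^2(D)}^2$ then gives, almost surely,
\[
\|\widehat{u}_{N,\eps}(t)\|_{L^2(D)}^2\le \|\upsn\|_{L^2(D)}^2+\|\upsnp\|_{L^2(D)}^2 .
\]
For $t=T$ we simply have $\widehat{u}_{N,\eps}(T)=\ups^N$. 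Taking expectations and using \eqref{uhnboundbis} from Proposition \ref{bounds}, we get $\erww{\|\widehat{u}_{N,\eps}(t)\|_{L^2(D)}^2}\le 2\Upsilon$ for all $t\in[0,T]$ and all $m$ large enough, namely those with $\tau_m\le \frac34\frac{1}{1+2L_\beta}$. This bound is uniform in $m$ (hence in $N$ and $\eps$) and in $t$. Alternatively, one could bound by $\erww{\max_n\|\upsn\|_{L^2(D)}^2}\le K_2$ via Lemma \ref{250813_lem1}, but \eqref{uhnboundbis} already suffices.

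Finally, drop the finitely many indices $m$ for which $\tau_m$ is too large; this does not affect subsequence extraction. The sequence $(\widehat{u}_{N,\eps}(t))_m$ is then bounded in the reflexive (Hilbert) space $L^2(\Omega\times D)$, so some subsequence $(m_t)$ converges weakly to some $\chi_t\in L^2(\Omega\times D)$.

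There is no real obstacle here. The only point needing care is that the bound must hold for each fixed $t$ rather than merely in $L^2(0,T;\cdot)$; the convex-combination estimate above handles that. Whether $\chi_t$ is independent of the subsequence, and its identification with $u(t)$, belong to later steps and are not needed for this statement.
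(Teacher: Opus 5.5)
Your proposal is correct and follows the paper's own argument. The paper's proof just says the lemma is a direct consequence of Proposition \ref{bounds}, meaning the uniform bound on $\erww{\|\upsn\|_{L^2(D)}^2}$ combined with weak sequential compactness of bounded sets in the Hilbert space $L^2(\Omega\times D)$; your convex-combination estimate for $\widehat{u}_{N,\eps}(t)$ spells out the transfer of that bound which the paper leaves implicit.
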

\begin{proof}
This is a direct consequence of Proposition \ref{bounds}.
\end{proof}
\begin{lem}\label{CVpeuhnr}
There exists a process $\psi$ in $L^2_{\mathcal{P}_T}\big(\Omega_T;L^2(D)\big)$ such that, up to subsequences denoted in the same way, $(\pe(\uhnr))_m$ and $(\pe(\uhnl))_m$ weakly converge towards $\psi$ in $L^2(\Omega;L^2(0,T;L^2(D)))$ as $m\to\infty.$
\end{lem}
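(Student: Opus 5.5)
The proof combines the uniform bounds of Lemma \ref{pebounds} with Remark \ref{251002_r1} and weak compactness in a Hilbert space. The standing coupling $\tau_m=\mathcal{O}(\eps_m^{2+\theta})$ with $\theta\in(0,\frac{4}{r}-2]$ is in force, so Lemma \ref{pebounds} applies for $m$ large. It shows that $(\pe(\uhnr))_m$ is bounded in $L^2(\Omega;L^2(0,T;L^2(D)))$, identified with $L^2(\Omega_T;L^2(D))$. It also shows that $(\pe(\uhnl))_m$ is bounded in the closed subspace $L^2_{\mathcal{P}_T}(\Omega_T;L^2(D))$.

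Since $L^2(\Omega_T;L^2(D))$ is a Hilbert space, bounded sequences are weakly relatively compact. I extract a subsequence, not relabeled, along which $\pe(\uhnl)\rightharpoonup\psi$ weakly. From it I extract a further subsequence along which $\pe(\uhnr)\rightharpoonup\tilde\psi$ weakly. All convergences established previously (Lemmas \ref{addreg u}, \ref{251006_l1}) are preserved, since we only pass to further subsequences.

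To identify the two limits, I use \eqref{controldiffpe(uhnl)pe(uhnr)}:
\[
\|\pe(\uhnr)-\pe(\uhnl)\|_{L^2(\Omega;L^2(0,T;L^2(D)))}^2\le \frac{\tau_m}{\eps_m^2}K_0 .
\]
By hypothesis $\tau_m\le C\eps_m^{2+\theta}$, so the right-hand side is bounded by $CK_0\eps_m^{\theta}\to0$ as $m\to\infty$. Hence the difference tends to $0$ strongly, and therefore weakly, which gives $\tilde\psi=\psi$. This is the only place the strict positivity of $\theta$ matters, and it is the mild obstacle: with $\theta=0$ one would only obtain boundedness of the difference, not its vanishing.

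It remains to show that $\psi$ is predictable. For each $m$, $\uhnl$ is an elementary adapted process and hence predictable with values in $L^2(D)$ (Lemma \ref{210611_lem01}). Because $\pe$ is Lipschitz, $\pe(\uhnl)$ is predictable as well, so each term of the sequence lies in $L^2_{\mathcal{P}_T}(\Omega_T;L^2(D))$. This subspace is closed and convex, hence weakly closed by Mazur's lemma. The weak limit $\psi$ therefore belongs to $L^2_{\mathcal{P}_T}(\Omega_T;L^2(D))$, which proves the statement.
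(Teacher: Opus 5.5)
Your proof is correct and follows the same route as the paper. It takes uniform bounds from Proposition \ref{boundpe} (through Lemma \ref{pebounds}), applies weak compactness, and identifies the two limits via \eqref{controldiffpe(uhnl)pe(uhnr)} since $\tau_m/\eps_m^2=\mathcal{O}(\eps_m^{\theta})\to 0$. Predictability of $\psi$ is then inherited from $\pe(\uhnl)$ through weak closedness of $L^2_{\mathcal{P}_T}(\Omega_T;L^2(D))$, which you spell out more carefully than the paper does.

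Only your side remark is slightly off. Strict positivity of $\theta$ is also indispensable in Proposition \ref{boundpe} itself, via the exponent $q=1+\frac{2}{\theta}$. So the identification step is not the only place where $\theta>0$ matters.
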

\begin{proof}
This comes as an immediate consequence of Remark \ref{251002_r1} and Proposition \ref{boundpe}. The limit process $\psi$ is predictable with values in $L^2(D)$ because at the limit $(\pe(\uhnl))_m$ is.
\end{proof}
\begin{lem}\label{CVnpapp} There exist $\psi_1,\psi_2$ in $L^2_{\mathcal{P}_T}\big(\Omega_T;L^2(D)\big)$ such that, up to subsequences denoted in the same way, $\left(-\frac{(\uhnr)^-}{\eps}\right)_{m}$ converges towards $\psi_1$ and $\left(\frac{(\uhnr-1)^+}{\eps}\right)_{m}$ converges towards $\psi_2$, both weakly in $L^2(\Omega;L^2(0,T;L^2(D)))$ as $m\to\infty.$
\end{lem}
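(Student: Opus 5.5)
The existence of $\psi_1$ and $\psi_2$ follows from the uniform bounds of Lemma \ref{buhnrppanp} together with the reflexivity of the Hilbert space $L^2(\Omega;L^2(0,T;L^2(D)))\cong L^2(\Omega_T;L^2(D))$. Since $(N_m,\eps_m)$ satisfies $\tau_m=\mathcal{O}(\eps_m^{2+\theta})$ with $\theta\in(0,\frac{4}{r}-2]$, Lemma \ref{buhnrppanp} applies for $m$ large. It gives that $\left(-\frac{(\uhnr)^-}{\eps}\right)_m$ and $\left(\frac{(\uhnr-1)^+}{\eps}\right)_m$ are bounded in that space. By the Eberlein--\v{S}mulian (or Banach--Alaoglu) theorem, I first extract a subsequence along which the first sequence converges weakly to some $\psi_1$. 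From that subsequence I then extract a further subsequence along which the second converges weakly to some $\psi_2$. This diagonal extraction should be compatible with the subsequences already chosen in Lemmas \ref{addreg u}--\ref{CVpeuhnr}, so I would extract from the subsequence obtained there.

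The remaining point, and the only real obstacle, is predictability. The processes built from $\uhnr$ are a priori only adapted with a one-step shift, not predictable. To handle this, I would use Lemma \ref{buhnlppanp}: the sequences $\left(-\frac{(\uhnl)^-}{\eps}\right)_m$ and $\left(\frac{(\uhnl-1)^+}{\eps}\right)_m$ are bounded in the closed subspace $L^2_{\mathcal{P}_T}(\Omega_T;L^2(D))$, since $\uhnl$ is predictable and composition with the Lipschitz maps $v\mapsto -v^-/\eps$ and $v\mapsto (v-1)^+/\eps$ preserves predictability. Extracting further, they converge weakly in $L^2(\Omega_T;L^2(D))$ to limits $\tilde\psi_1,\tilde\psi_2$. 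By Mazur's lemma, a closed subspace is weakly closed, so these limits lie in $L^2_{\mathcal{P}_T}(\Omega_T;L^2(D))$.

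It then suffices to identify $\tilde\psi_i=\psi_i$. By Remark \ref{remlimcomu}, the squared norm of the difference between the $\uhnr$-based and $\uhnl$-based sequences is at most $\frac{\tau}{\eps^2}K_0$. Under $\tau=\mathcal{O}(\eps^{2+\theta})$ we get $\frac{\tau}{\eps^2}=\mathcal{O}(\eps^{\theta})\to0$, so the difference tends to $0$ strongly. Weak limits therefore coincide, giving $\psi_1,\psi_2\in L^2_{\mathcal{P}_T}(\Omega_T;L^2(D))$. As a by-product, $\psi_1+\psi_2=\psi$, where $\psi$ is the limit from Lemma \ref{CVpeuhnr}, because $\pe=-\frac{(\cdot)^-}{\eps}+\frac{(\cdot-1)^+}{\eps}$ and weak limits are linear.
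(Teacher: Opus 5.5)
Your proposal is correct and follows essentially the same route as the paper's one-line proof. That proof cites the same three ingredients: the bounds of Lemma~\ref{buhnrppanp} for weak compactness, the bounds of Lemma~\ref{buhnlppanp} in the (weakly) closed predictable subspace, and Remark~\ref{remlimcomu} with $\frac{\tau}{\eps^2}=\mathcal{O}(\eps^{\theta})\to 0$ to identify the $\uhnr$- and $\uhnl$-based weak limits, and you have simply spelled out the subsequence extraction and Mazur-type weak-closedness details the paper leaves implicit.
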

\begin{proof}
The result holds true thanks to Lemma \ref{buhnrppanp}, Lemma \ref{buhnlppanp} and Remark \ref{remlimcomu}.
\end{proof}
\begin{lem}\label{CVSppapn}
The following convergences hold strongly in $L^2(\Omega;L^2(0,T;L^2(D)))$:
\begin{equation*}
(\uhnr)^-\to 0 \text{ and }(\uhnr-1)^+\to 0 \text{ as }m\to\infty.
\end{equation*}
\end{lem}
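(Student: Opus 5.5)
The plan is to read this off Lemma \ref{buhnrppanp} by a simple scaling argument, with no compactness needed. By Lemma \ref{buhnrppanp}, under the standing assumption $\tau_m=\mathcal{O}(\eps_m^{2+\theta})$, there is a constant $C\geq 0$ independent of $m$ such that, for $m$ large enough,
\begin{equation*}
\left\|\frac{(\uhnr)^-}{\eps}\right\|_{L^2(\Omega;L^2(0,T;L^2(D)))}^2+\left\|\frac{(\uhnr-1)^+}{\eps}\right\|_{L^2(\Omega;L^2(0,T;L^2(D)))}^2\leq C.
\end{equation*}
In fact this sum equals $\|\pe(\uhnr)\|^2_{L^2(\Omega;L^2(0,T;L^2(D)))}$, because the supports of the two parts are disjoint, and that quantity is bounded by $K_1$ through Proposition \ref{boundpe}.

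Multiplying by $\eps^2$ gives
\begin{equation*}
\|(\uhnr)^-\|_{L^2(\Omega;L^2(0,T;L^2(D)))}^2+\|(\uhnr-1)^+\|_{L^2(\Omega;L^2(0,T;L^2(D)))}^2\leq C\eps_m^2 .
\end{equation*}
Since $\eps_m\to 0$ as $m\to\infty$, both norms tend to $0$, which is exactly the claimed strong convergence. This holds along the whole sequence $(m)$, so no extraction of a subsequence is needed.

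The only point requiring care is that the bound from Proposition \ref{boundpe} is uniform in $m$. This uses the coupling $\tau_m=\mathcal{O}(\eps_m^{2+\theta})$ with $\theta\in(0,\frac4r-2]$, which keeps the term $\left(\tau/\eps^{2+\theta}\right)^{2/\theta}$ in \eqref{250813_06_ced} bounded. It also uses that $N_m$ is eventually large enough for Proposition \ref{bounds} to apply, i.e. $\tau_m\leq \frac34\frac{1}{1+2L_\beta}$.

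The same argument, using Lemma \ref{buhnlppanp}, also gives the analogous statement for $\uhnl$. Combined with Lemma \ref{CVnpapp}, it will later allow the identification $\psi_1\leq 0\leq\psi_2$ and the constraint $0\leq u\leq 1$.
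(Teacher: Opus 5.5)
Your proposal is correct and follows the paper's own proof: Lemma \ref{buhnrppanp} (equivalently Proposition \ref{boundpe}, since the two parts of $\pe(\uhnr)$ have disjoint supports) gives $\|(\uhnr)^-\|^2+\|(\uhnr-1)^+\|^2\leq M\eps_m^2\to 0$ in $L^2(\Omega;L^2(0,T;L^2(D)))$ along the whole sequence. Only your closing aside is slightly imprecise: $\psi_1\leq 0\leq \psi_2$ comes from the signs of the approximating sequences and weak closedness, whereas this lemma is what yields $0\leq u\leq 1$ and the limit of $\erww{\int_0^T\int_D \pe(\uhnr)\uhnr\,dx\,dt}$ in Remark \ref{cvtpsi2}.
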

\begin{proof}From Lemma \ref{buhnrppanp}, there exists a constant $M>0$ not depending on the parameters $N\in\na^\star$ and $\eps\in\R_+^\star$ such that
\begin{equation*}
\|(\uhnr)^-\|_{L^2(\Omega;L^2(0,T;L^2(D)))}^2+\|(\uhnr-1)^+\|_{L^2(\Omega;L^2(0,T;L^2(D)))}^2\leq M\eps^2,
\end{equation*}
and the announced result holds.
\end{proof}
\begin{rem}\label{cvtpsi2}
As explained in \cite[Section 3.2]{BBBLV}, since $-\frac{(\uhnr)^-}{\eps}\leq 0$ and $\frac{(\uhnr-1)^+}{\eps}\geq 0$, it follows that $\psi_1\leq 0$ and $\psi_2 \geq 0$. Using the fact that $\pe(\uhnr)=-\frac{(\uhnr)^-}{\eps}+\frac{(\uhnr-1)^{+}}{\eps}$, from Lemmas \ref{CVpeuhnr} and \ref{CVnpapp} we get that $\psi=\psi_1+\psi_2$. From
\begin{equation*}
\pe(\uhnr)\uhnr=\frac{(\uhnr)^-}{\eps}\times (\uhnr)^-+\frac{(\uhnr-1)^+}{\eps}\times \left((\uhnr-1)^++1\right),
\end{equation*}
one obtains thanks to Lemmas \ref{CVnpapp} and \ref{CVSppapn} that
\begin{equation*}
\lim_{m\to\infty}\erwb \int_0^T\int_D\pe(\uhnr(t,x))\uhnr(t,x)\,dx\,dt \erwe
= \erwb \int_0^T\int_{ D}\psi_2(t,x)\,dx\,dt \erwe.
\end{equation*}
\end{rem}
\begin{lem}\label{CVguhnl} There exists a process $\mathcal{G}$ in $L^2_{\mathcal{P}_T}(\Omega_T;\HS)$ such that, up to subsequences denoted in the same way, $\left(G(\uhnr)\right)_m$ and $(G(\uhnl))_m$ converge to $\mathcal{G}$ weakly in $L^2(\Omega;L^2(0,T;\HS)\text{ as }m\to\infty.$
\end{lem}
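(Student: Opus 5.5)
We combine the uniform bounds of Lemma \ref{boundguhnlr_and_betauhnlr} with the reflexivity of the Hilbert space $L^2(\Omega;L^2(0,T;\HS))$. By Lemma \ref{boundguhnlr_and_betauhnlr}, for $m$ large both $(G(\uhnr))_m$ and $(G(\uhnl))_m$ are bounded in this space, independently of $N$ and $\eps$. Since $\HS$ is a separable Hilbert space, $L^2(\Omega_T;\HS)$ is a Hilbert space, so bounded sequences have weakly convergent subsequences. We first extract a subsequence (not relabeled, and compatible with the subsequences already chosen in Lemmas \ref{addreg u}--\ref{CVnpapp}) along which $G(\uhnl)\rightharpoonup \mathcal{G}$ weakly. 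We then extract a further subsequence along which $G(\uhnr)\rightharpoonup \widetilde{\mathcal{G}}$ weakly.

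To show $\widetilde{\mathcal{G}}=\mathcal{G}$, we use the Lipschitz estimate \eqref{G3_inequality_G_q2} of Lemma \ref{lemma_G1_G3} together with \eqref{210824_05} of Remark \ref{251002_r1}:
\begin{align*}
\|G(\uhnr)-G(\uhnl)\|_{L^2(\Omega;L^2(0,T;\HS))}^2\leq L_{G,2}^2\|\uhnr-\uhnl\|_{L^2(\Omega;L^2(0,T;L^2(D)))}^2\leq L_{G,2}^2K_0\tau.
\end{align*}
Since $\tau=\tau_m\to 0$, the difference tends to zero strongly, hence also weakly. By uniqueness of weak limits, the two weak limits coincide.

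It remains to check predictability of $\mathcal{G}$. By Lemma \ref{boundguhnlr_and_betauhnlr}, each $G(\uhnl)$ is $\mathcal{P}_T$-measurable, i.e.\ belongs to $L^2_{\mathcal{P}_T}(\Omega_T;\HS)$. This set is a linear subspace of $L^2(\Omega_T;\HS)$ and is strongly closed, because an $L^2$-limit of predictable processes admits an a.e.\ convergent subsequence and therefore has a predictable version. A closed convex set is weakly closed by Mazur's lemma, so the weak limit $\mathcal{G}$ lies in $L^2_{\mathcal{P}_T}(\Omega_T;\HS)$. The proof is essentially routine. The only mildly delicate point is this closedness of the predictable subspace, which we handle exactly as in Lemma \ref{CVpeuhnr}.
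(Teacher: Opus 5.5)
Your proposal is correct and follows the paper's proof. The bounds of Lemma \ref{boundguhnlr_and_betauhnlr} give weak compactness, \eqref{G3_inequality_G_q2} together with \eqref{210824_05} identifies the common limit, and predictability of $\mathcal{G}$ is inherited from $(G(\uhnl))_m$. Your closedness-plus-Mazur argument only spells out what the paper leaves implicit, and the second extraction is unnecessary, since $G(\uhnr)-G(\uhnl)\to 0$ strongly already gives $G(\uhnr)\rightharpoonup\mathcal{G}$ along the same subsequence.
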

\begin{proof}
The existence result is a consequence of Lemma \ref{boundguhnlr_and_betauhnlr}, while the fact that the weak limit $\mathcal{G}$ is common follows from (\ref{G3_inequality_G_q2}) combined with the estimate \eqref{210824_05}.
The predictability property of $\mathcal{G}$ with values in $\HS$ is inherited from $\left(G(\uhnl)\right)_m$ at the limit.
\end{proof}
\begin{lem}\label{251002_l1}
The sequences $(M_{N,\eps})_m$ and $(\widehat{M}_{N,\eps})_m$ given by Definition \ref{piecewiseconstantbis} converge towards $\int_0^{\cdot}\mathcal{G}(s)\,dW(s)$ weakly in $L^2(\Omega;\mathscr{C}([0,T];L^2(D)))$ as $m\to\infty$.
\end{lem}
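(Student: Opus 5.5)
The plan is to exploit the fact that the It\^o integral is a bounded linear operator between the relevant Bochner spaces, so that weak convergence of the integrands transfers to weak convergence of the stochastic integrals. Then we use Lemma \ref{251007_l1} to identify the limit of $(\widehat{M}_{N,\eps})_m$.

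\textbf{Step 1: the operator $\mathcal{I}$.} Define
\[
\mathcal{I}:L^2_{\mathcal{P}_T}(\Omega_T;\HS)\to L^2(\Omega;\mathscr{C}([0,T];L^2(D))), \qquad \mathcal{I}(\mathscr{H}):=\int_0^\cdot \mathscr{H}(s)\,dW(s).
\]
By the Burkholder--Davis--Gundy inequality (\cite[Theorem 4.36]{DPZ14}, as already used in the proof of Lemma \ref{260409_lem1}), one has
\[
\erww{\sup_{t\in[0,T]}\|\mathcal{I}(\mathscr{H})(t)\|_{L^2(D)}^2}\leq C\,\erww{\int_0^T\|\mathscr{H}\|_{\HS}^2\,ds}.
\]
Hence $\mathcal{I}$ is linear and continuous, and therefore also weak-to-weak continuous. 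The domain $L^2_{\mathcal{P}_T}(\Omega_T;\HS)$ is a closed subspace of $L^2(\Omega;L^2(0,T;\HS))$. By Lemma \ref{boundguhnlr_and_betauhnlr}, the processes $G(\uhnl)$ are predictable. Lemma \ref{CVguhnl} then gives $G(\uhnl)\rightharpoonup\mathcal{G}$ weakly in this closed subspace, since weak convergence in a space implies weak convergence in any closed subspace containing the whole sequence. Consequently $M_{N,\eps}=\mathcal{I}(G(\uhnl))\rightharpoonup \mathcal{I}(\mathcal{G})$ weakly in $L^2(\Omega;\mathscr{C}([0,T];L^2(D)))$.

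\textbf{Step 2: transferring to $\widehat{M}_{N,\eps}$.} This step needs care. Fix a functional $\Lambda$ in the dual of $L^2(\Omega;\mathscr{C}([0,T];L^2(D)))$ and write
\[
\Lambda(\widehat{M}_{N,\eps})-\Lambda(\mathcal{I}(\mathcal{G}))=\Lambda(\widehat{M}_{N,\eps}-M_{N,\eps})+\Lambda(M_{N,\eps}-\mathcal{I}(\mathcal{G})).
\]
The second term tends to $0$ by Step 1. For the first term, Lemma \ref{251007_l1} gives convergence of $\widehat{M}_{N,\eps}-M_{N,\eps}$ to $0$ only in $L^1(\Omega;\mathscr{C}([0,T];L^2(D)))$. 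Meanwhile, Lemma \ref{260409_lem1} bounds the same difference in $L^2(\Omega;\mathscr{C}([0,T];L^2(D)))$. I will combine the two bounds in one of the following ways.

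\emph{Option (a): boundedness plus a dense set of functionals.} The sequence $(\widehat{M}_{N,\eps}-M_{N,\eps})_m$ is bounded in $L^2(\Omega;\mathscr{C}([0,T];L^2(D)))$. Functionals of the form $\Lambda(v)=\erww{\ell(v)}$, with $\ell$ a bounded random functional in $L^\infty(\Omega;\mathscr{C}([0,T];L^2(D))^*)$, are continuous on $L^1(\Omega;\mathscr{C}([0,T];L^2(D)))$. Applied to such $\Lambda$, the difference therefore tends to $0$ by Lemma \ref{251007_l1}. I then conclude by density of these functionals in the dual of the $L^2$ space, using the uniform $L^2$ bound.

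\emph{Option (b): uniform integrability.} Alternatively, Vitali's theorem applies to $\sup_t\|M_{N,\eps}-\widehat{M}_{N,\eps}\|_{L^2(D)}$. This random variable is bounded in $L^2(\Omega)$, hence uniformly integrable in $L^1$, but its $L^2$ norm does not obviously tend to $0$. Option (b) therefore does not directly give strong $L^2$ convergence, and I would use option (a).

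\textbf{Main obstacle.} The hard part is the duality subtlety in Step 2. The dual of a Bochner space with a non-reflexive, non-RNP target such as $\mathscr{C}([0,T];L^2(D))$ is not simply a Bochner space, so the density of the $L^\infty$-type functionals needs justification. One route is to embed $L^2(\Omega;\mathscr{C}([0,T];L^2(D)))$ isometrically into a larger space whose dual is easier to describe. The other is to interpret weak convergence as tested against exactly those functionals, in the sense in which the result is used later in the paper.
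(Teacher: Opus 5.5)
Your proposal is essentially the paper's proof. Step~1 is identical, and in Step~2 the paper uses exactly your two ingredients, the $L^2$ bound of Lemma~\ref{260409_lem1} and the $L^1$ estimate of Lemma~\ref{251007_l1}, simply declaring the weak convergence of $(\widehat{M}_{N,\eps})_m$ to follow immediately, so the duality point you flag is the only nontrivial step, and it is true. It closes at the level of functionals, with no description of $X^*$ for $X:=\mathscr{C}([0,T];L^2(D))$ and no need for your fallback of redefining weak convergence, which would prove a weaker lemma; note also that strongly measurable $L^\infty(\Omega;X^*)$-densities are in fact \emph{not} dense in $L^2(\Omega;X)^*$, since $X^*$ fails the Radon--Nikod\'ym property. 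For $\Lambda\in L^2(\Omega;X)^*$, the map $\phi\mapsto\sup\{|\Lambda(v)| : \|v\|_X\le\phi \text{ a.s.}\}$ is additive on nonnegative $\phi\in L^2(\Omega)$ (split $v$ proportionally to $\phi_1,\phi_2$ and flip signs), hence equals $\phi\mapsto\mathbb{E}[h\phi]$ for some $0\le h\in L^2(\Omega)$, so that $|\Lambda(v)|\le\mathbb{E}[h\|v\|_X]$ and therefore $|\Lambda(\widehat{M}_{N,\eps}-M_{N,\eps})|\le R\,K_2\tau^{\gamma}+\sqrt{K_3}\,\|h\mathds{1}_{\{h>R\}}\|_{L^2(\Omega)}$ for every $R>0$, which gives the claim by letting $m\to\infty$ and then $R\to\infty$.
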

\begin{proof}
The weak convergence result for $(M_{N,\eps})_m$ follows from Lemma \ref{CVguhnl} and the fact that the stochastic It\^{o} integral operator
\begin{equation*}
\mathcal{I}: L^2_{\mathcal{P}_T}\big(\Omega_T;\HS\big)\to L^2(\Omega;\mathscr{C}([0,T];L^2(D))), \quad \mathscr{H}\mapsto \mathcal{I}(\mathscr{H})=\int_0^{\cdot}\mathscr{H}(s)\,dW(s)
\end{equation*}
is a linear mapping and, thanks to Burkholder-Davis-Gundy inequality (see, \textit{e.g.,} \cite[Theorem 4.36, p.114]{DPZ14}), also a (norm) continuous mapping. As a consequence, using the fact that $(M_{N,\eps})_{m}=(\mathcal{I}(G(\uhnl)))_{m}$, one deduces that the sequence $(M_{N,\eps})_{m}$ weakly converges towards $\mathcal{I}(\mathcal{G})$ in $L^2(\Omega;\mathscr{C}([0,T];L^2(D)))$ as $m\to\infty$.
Thanks to this weak convergence result, we can affirm that up to a subsequence denoted in the same way, the sequence $(M_{N,\eps})_m$ is bounded in $L^2(\Omega;\mathscr{C}([0,T];L^2(D)))$. Using the fact that $(M_{N,\eps}-\widehat{M}_{N,\eps})_m$ is bounded in $L^2(\Omega;\mathscr{C}([0,T];L^2(D)))$ by Lemma \ref{260409_lem1}, we then get the boundedness of $(\widehat{M}_{N,\eps})_m$ in the same space. Combining this with Lemma \ref{251007_l1}, the weak convergence of $(\widehat{M}_{N,\eps})_m$ towards the same limit as $(M_{N,\eps})_m$ follows immediately.
\end{proof}
\begin{lem}\label{251016_l2} Up to a subsequence denoted in the same way, $\left(\partial_t\left(\widehat{u}_{N,\eps}-\widehat{M}_{N,\eps}\right)\right)_m$ weakly converges towards $\partial_t\left(u-\int_0^\cdot \mathcal{G}(s)dW(s)\right)$ in $L^{p'}(\Omega;L^{p'}(0,T;V^*))$ as $m \to \infty$.
\end{lem}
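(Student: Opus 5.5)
Lemma \ref{251016_l1} says that $\bigl(\partial_t(\widehat{u}_{N,\eps}-\widehat{M}_{N,\eps})\bigr)_m$ is bounded in the reflexive space $L^{p'}(\Omega;L^{p'}(0,T;V^*))$, since $1<p'\le 2$. After passing to a further subsequence, this sequence therefore converges weakly to some limit $\xi$ in that space. The whole task is then to identify $\xi$ with the distributional time derivative $\partial_t\bigl(u-\int_0^\cdot\mathcal{G}(s)\,dW(s)\bigr)$, taken in $V^*$.

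To identify $\xi$, I would test against separable functions. Fix $\varphi\in\mathscr{C}_c^\infty(0,T)$, $v\in V$, and $Z\in L^\infty(\Omega)$. Integrating by parts in time pathwise, using that $\widehat{u}_{N,\eps}-\widehat{M}_{N,\eps}$ is piecewise affine and continuous in time, gives
\begin{align*}
\erww{Z\int_0^T\varphi(t)\left\langle\partial_t(\widehat{u}_{N,\eps}-\widehat{M}_{N,\eps})(t),v\right\rangle_{V^*,V}dt}
=-\erww{Z\int_0^T\varphi'(t)\left(\widehat{u}_{N,\eps}(t)-\widehat{M}_{N,\eps}(t),v\right)_{L^2(D)}dt}.
\end{align*}
As $m\to\infty$, the left-hand side tends to $\erww{Z\int_0^T\varphi\langle\xi,v\rangle\,dt}$. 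This holds because $(\omega,t)\mapsto Z(\omega)\varphi(t)v$ belongs to $L^{p}(\Omega;L^{p}(0,T;V))$, which is the dual of $L^{p'}(\Omega;L^{p'}(0,T;V^*))$.

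For the right-hand side, Lemma \ref{addreg u} gives $\widehat{u}_{N,\eps}\rightharpoonup u$ weakly in $L^2(\Omega_T;L^2(D))$. Lemma \ref{251002_l1} gives $\widehat{M}_{N,\eps}\rightharpoonup\int_0^\cdot\mathcal{G}\,dW$ weakly in $L^2(\Omega;\mathscr{C}([0,T];L^2(D)))$. I also need weak convergence in $L^2(\Omega_T;L^2(D))$, so that I can test against $Z\varphi' v$. I would get it from the continuous embedding of $L^2(\Omega;\mathscr{C}([0,T];L^2(D)))$ into $L^2(\Omega_T;L^2(D))$: the adjoint of a bounded linear map sends the test functional back to a functional on the larger space, so weak convergence is preserved. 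As a cross-check, the same conclusion follows directly from the It\^o integral being linear and continuous from $L^2_{\mathcal{P}_T}(\Omega_T;\HS)$ into $L^2(\Omega_T;L^2(D))$, together with Lemma \ref{251007_l1}. The right-hand side therefore tends to
\[
-\erww{Z\int_0^T\varphi'\Bigl(u-\int_0^\cdot\mathcal{G}\,dW,\;v\Bigr)_{L^2(D)}dt}.
\]

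Comparing the two limits, the identity holds for every $Z\in L^\infty(\Omega)$. Hence, for almost every $\omega$,
\[
\int_0^T\varphi\,\langle\xi,v\rangle\,dt=-\int_0^T\varphi'\Bigl(u-\int_0^\cdot\mathcal{G}\,dW,\;v\Bigr)\,dt .
\]
The null set depends on $\varphi$ and $v$, so I would restrict to countable dense families of $\varphi\in\mathscr{C}_c^\infty(0,T)$ and $v\in V$; this is possible because $V$ is separable. Taking the union of the resulting countably many null sets, I conclude that $\xi=\partial_t\bigl(u-\int_0^\cdot\mathcal{G}\,dW\bigr)$ in $\mathcal{D}'(0,T;V^*)$ for almost every $\omega$. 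Since $\xi\in L^{p'}(0,T;V^*)$, this identifies the weak limit, and so the whole subsequence converges to it.

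I expect the main obstacle to be the weak convergence of $\widehat{M}_{N,\eps}$ in a space compatible with the test functions, together with the careful handling of the countable dense family. The latter is what upgrades the identity from "for each test function, almost surely" to an identity of $V^*$-valued distributions holding almost surely.
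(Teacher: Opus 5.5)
Your proposal is correct and follows the paper's route. You extract a weak limit in $L^{p'}(\Omega;L^{p'}(0,T;V^*))$ from the bound of Lemma \ref{251016_l1}, then identify it using the weak convergence of $\widehat{u}_{N,\eps}-\widehat{M}_{N,\eps}$ in $L^2(\Omega_T;L^2(D))$ given by Lemmas \ref{addreg u} and \ref{251002_l1}; the paper compresses this identification into the continuity of $\partial_t$ on $L^{p'}(\Omega;W^{1,(2,p')}(0,T;L^2(D),V^*))$, whereas you spell it out by integrating by parts pathwise against $Z\varphi v$ and using a countable density argument.
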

\begin{proof}
Lemma \ref{251016_l1} gives us the boundedness of the sequence $\left(\partial_t\left(\widehat{u}_{N,\eps}-\widehat{M}_{N,\eps}\right)\right)_m$ in $L^{p'}(\Omega;L^{p'}(0,T;V^*))$. By combining this with the boundedness of $\left(\widehat{u}_{N,\eps}-\widehat{M}_{N,\eps}\right)_m$ in $L^2(\Omega_T;L^2(D))$ and its weak convergence towards $u-\int_0^\cdot \mathcal{G}(s)dW(s)$ in the same space (given by Lemmas \ref{addreg u} and \ref{251002_l1}), the announced result is immediate by continuity of the time derivative operator from $L^{p'}(\Omega;W^{1,(2,p')}(0,T;L^2(D), V^*))$ to $L^{p'}(\Omega;L^{p'}(0,T;V^*))$, where $W^{1,(2,p')}(0,T;L^2(D), V^*)=\{v\in L^2(0,T;L^2(D)): \partial_t v \in L^{p'}(0,T;V^*) \}$.
\end{proof}
\begin{lem}\label{CVbetauhnl} There exists a process $\mathcal{B}$ in $L^2_{\mathcal{P}_T}\big(\Omega_T;L^2(D)\big)$ such that, up to subsequences denoted in the same way, $(\beta(\uhnr))_m$ and $(\beta(\uhnl))_m$ converge weakly to $\mathcal{B}$ in $L^2(\Omega;L^2(0,T;L^2(D)))\text{ as }m\to\infty.$
\end{lem}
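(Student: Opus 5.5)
The plan mirrors the proof of Lemma \ref{CVguhnl}, with $\HS$ replaced by $L^2(D)$ and the Lipschitz estimate \eqref{G3_inequality_G_q2} replaced by $(H_2)$.

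First, Lemma \ref{boundguhnlr_and_betauhnlr} gives that $(\beta(\uhnr))_m$ and $(\beta(\uhnl))_m$ are bounded in $L^2(\Omega;L^2(0,T;L^2(D)))$. This is a Hilbert space, so weakly compact subsets are sequentially weakly compact. We may therefore extract a common (not relabeled) subsequence along which $\beta(\uhnr)\rightharpoonup\mathcal{B}_1$ and $\beta(\uhnl)\rightharpoonup\mathcal{B}$ weakly in this space. The extraction is done on the subsequence already selected in the previous lemmas of this section.

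Second, we identify the two limits. By the Lipschitz continuity of $\beta$ in $(H_2)$ and the estimate \eqref{210824_05} of Remark \ref{251002_r1},
\begin{align*}
\|\beta(\uhnr)-\beta(\uhnl)\|_{L^2(\Omega;L^2(0,T;L^2(D)))}^2\leq L_\beta^2\|\uhnr-\uhnl\|_{L^2(\Omega;L^2(0,T;L^2(D)))}^2\leq L_\beta^2K_0\tau_m .
\end{align*}
The right-hand side tends to $0$ as $m\to\infty$. Hence the difference converges to $0$ strongly, and so also weakly, which forces $\mathcal{B}_1=\mathcal{B}$.

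Third, we check predictability. By Lemma \ref{boundguhnlr_and_betauhnlr}, each $\beta(\uhnl)$ lies in the closed subspace $L^2_{\mathcal{P}_T}(\Omega_T;L^2(D))$, which consists of $\mathcal{P}_T$-measurable classes. A closed linear subspace is convex, hence weakly closed by Mazur's lemma. The weak limit $\mathcal{B}$ therefore belongs to $L^2_{\mathcal{P}_T}(\Omega_T;L^2(D))$.

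No step is a real obstacle. The only point needing care is to extract the subsequences simultaneously and consistently with those fixed in Lemmas \ref{addreg u}--\ref{251016_l2}, so that the later identification $\mathcal{B}=\beta(u)$ can use them all together.
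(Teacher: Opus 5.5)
Your proposal is correct and follows the paper's argument. Lemma \ref{boundguhnlr_and_betauhnlr} gives the boundedness and hence the weak limits, and the Lipschitz continuity of $\beta$ together with \eqref{210824_05} forces the two limits to coincide; your explicit proof that the limit is predictable, via weak closedness of the closed subspace $L^2_{\mathcal{P}_T}(\Omega_T;L^2(D))$, fills in a point the paper leaves implicit.
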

\begin{proof} The weak convergence is a consequence of Lemma \ref{boundguhnlr_and_betauhnlr}, while the joint limit $\mathcal{B}$ follows from the Lipschitz continuity of $\beta$ and \eqref{210824_05}.
\end{proof}
\subsection{The limit equation}
\begin{prop}\label{PTTL} The weak limit $u$ introduced in Proposition \ref{addreg u} is an element of $L^2(\Omega;\mathscr{C}([0,T];L^2(D)))$ and satisfies \Pas, for all $t\in [0,T]$
\begin{align}\label{251009_04}
\hspace*{-0.27cm}u(t)-u_0+\hspace*{-0.05cm}\int_0^t\hspace*{-0.05cm}\left(y(s)-\dvz(s)+\psi(s)\right)\,ds\hspace*{-0.05cm}=\hspace*{-0.05cm}\int_0^t \left(\mathcal{B}(s)+f(s)\right)\,ds+\hspace*{-0.05cm}\int_0^t \mathcal{G}(s)\,dW(s)
\end{align}
in $L^2(D)$, where $\vec{z}$, $y$, $\psi$, $\mathcal{G}$ and $\mathcal{B}$ are given by Lemmas \ref{251006_l1}, \ref{CVpeuhnr}, \ref{CVguhnl} and \ref{CVbetauhnl} respectively, and where $\diver^{\mathcal{N}}$ denotes the divergence operator associated with Neumann boundary conditions.
\end{prop}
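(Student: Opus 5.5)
We pass to the limit weakly in the discrete equation written in integrated form, and then obtain time continuity from the identity itself. The scheme \eqref{240911_01}, summed over time steps, gives for all $t\in[0,T]$, \Pas\ and in $V^*$,
\begin{align*}
\widehat{u}_{N,\eps}(t)-u_0+\int_0^t\left(\plap\uhnr(s)+\pe(\uhnr(s))\right)ds=\widehat{M}_{N,\eps}(t)+\int_0^t\left(\beta(\uhnr(s))+f^N(s)\right)ds .
\end{align*}
This is the integrated form of the identity $\partial_t(\widehat{u}_{N,\eps}-\widehat{M}_{N,\eps})=-\plap\uhnr-\pe(\uhnr)+\beta(\uhnr)+f^N$ from the proof of Lemma \ref{251016_l1}, combined with $\widehat{u}_{N,\eps}(0)=u_0$ and $\widehat{M}_{N,\eps}(0)=0$.

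We test this identity against $\xi\,\varphi$, where $\xi\in L^\infty(\Omega\times(0,T))$ and $\varphi\in V$, take the expectation, and integrate in $t$. Each term then converges by linearity and weak convergence. The terms $\widehat{u}_{N,\eps}$ and $\widehat{M}_{N,\eps}$ converge by Lemmas \ref{addreg u} and \ref{251002_l1}. For the $p$-Laplacian term, $\langle\plap\uhnr,\varphi\rangle=\int_D(|\nabla\uhnr|^{p-2}\nabla\uhnr\cdot\nabla\varphi+|\uhnr|^{p-2}\uhnr\varphi)\,dx$ converges to $\langle y-\dvz,\varphi\rangle$ by Lemma \ref{251006_l1}. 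The terms $\pe(\uhnr)$, $\beta(\uhnr)$ and $f^N$ converge to $\psi$, $\mathcal{B}$ and $f$ by Lemmas \ref{CVpeuhnr}, \ref{CVbetauhnl} and \ref{CVfhnl}. The maps $v\mapsto\erw\int_0^T\xi(t)\int_0^t\langle v(s),\varphi\rangle\,ds\,dt$ are continuous linear functionals on the relevant $L^q$ spaces, so weak limits pass through the time integrals. Since $V$ is separable and $\xi$ is arbitrary, we obtain \eqref{251009_04} in $V^*$ for a.e. $(\omega,t)$. The limit identity itself requires no identification of nonlinear limits; that is done later.

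The hardest step is upgrading from ``a.e. $t$, in $V^*$'' to ``$u\in L^2(\Omega;\mathscr{C}([0,T];L^2(D)))$ and the equality holds for every $t$ in $L^2(D)$''. We plan to apply the Itô formula for the square of the norm in the Gelfand triple $V\subset L^2(D)\subset V^*$ (Krylov--Rozovskii, see \cite[Theorem 4.2.5]{LR}). Its hypotheses are satisfied here. First, $u\in L^p(\Omega_T;V)$ by Lemma \ref{addreg u}. Second, the drift $-(y-\dvz)-\psi+\mathcal{B}+f$ belongs to $L^{p'}(\Omega_T;V^*)$, because $\psi,\mathcal{B},f\in L^2\subset L^{p'}(\Omega_T;V^*)$ and $y,\vec z$ lie in $L^{p'}$. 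Third, $\mathcal{G}\in L^2_{\mathcal{P}_T}(\Omega_T;\HS)$, and $u_0\in L^2(\Omega;L^2(D))$ is $\mathcal{F}_0$-measurable. That theorem yields a continuous $L^2(D)$-valued adapted version $\tilde u$ which equals $u$ $dt\otimes\PP$-a.e. and satisfies \eqref{251009_04} for all $t$, \Pas. The theorem also gives $\erw\sup_t\|\tilde u(t)\|_{L^2(D)}^2<\infty$, which follows from the Itô formula together with Burkholder--Davis--Gundy. Identifying $u$ with $\tilde u$ then gives the claim. As the right-hand side and $u(t)$ are in $L^2(D)$, the equality holds in $L^2(D)$ and not only in $V^*$.

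As a consistency check, we will also verify that $\chi_t$ from Lemma \ref{251016_l3} coincides with $u(t)$. Passing to the limit in the tested identity evaluated at fixed $t$ shows that $\chi_t$ equals the right-hand side of \eqref{251009_04} at time $t$. This matches $u(t)$ for a.e. $t$, and hence for all $t$ by continuity. This identification is needed later for the energy argument.
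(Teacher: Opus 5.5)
Integrating the scheme in time before passing to the limit is a simpler route than the paper's. Because $u_0$ enters the limit identity directly, you avoid Lemma \ref{251016_l2}, the embedding of $W^{1,p'}(0,T;V^*)$ into $\mathscr{C}([0,T];V^*)$, and the separate integration-by-parts argument with $\chi_t$ that the paper uses to prove $u(0)=u_0$. Your passage to the limit in the tested identity is correct.

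\textbf{The gap: progressive measurability of the drift.} Besides integrability, \cite[Theorem 4.2.5]{LR} requires the drift $Y=-(y-\dvz)-\psi+\mathcal{B}+f$ to be progressively measurable, and you do not check this. It is not automatic for $y-\dvz$. The fields $\vec z$ and $y$ are weak limits of $|\nabla\uhnr|^{p-2}\nabla\uhnr$ and $|\uhnr|^{p-2}\uhnr$. On $[t_n,t_{n+1})$, $\uhnr$ equals the $\mathcal{F}_{t_{n+1}}$-measurable variable $\upsnp$, so it is not adapted. Hence Lemma \ref{251006_l1} gives no $\mathcal{P}_T$-measurability of these limits, unlike $\psi$, $\mathcal{B}$ and $\mathcal{G}$, which inherit predictability from $\uhnl$. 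Without it you do not know that $t\mapsto\int_0^t(y-\dvz)\,ds$ is adapted, so the theorem cannot be invoked as you do.

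\textbf{How to repair it.} This fits inside your framework and is the extra step the paper takes. Your a.e. identity shows that $F(t):=\int_0^t(y-\dvz)(s)\,ds$, which is continuous with values in $V^*$, coincides $d\mathds{P}\otimes dt$-a.e. with the predictable process $u_0-u(t)+\int_0^t(\mathcal{B}+f-\psi)\,ds+\int_0^t\mathcal{G}\,dW$.
\begin{itemize}
\item For a.e. $t$, $F(t)$ is therefore $\mathcal{F}_t$-measurable, since $\mathcal{F}_0$ contains the null sets.
\item For every $t$, this follows from the continuity of $F$ and the right-continuity of the filtration.
\item So $F$ is continuous and adapted, hence predictable.
\item Extending $F$ by $0$ for negative times, the backward difference quotients $h^{-1}(F(\cdot)-F(\cdot-h))$ are predictable and converge to $y-\dvz$ in $L^{p'}(\Omega_T;V^*)$ as $h\to 0$. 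Thus $y-\dvz$ has a predictable version.
\end{itemize}
With this added, the rest of your argument goes through: the It\^o formula, the continuity of $u$ in $L^2(D)$, the identity for every $t$ in $L^2(D)$, and $\chi_t=u(t)$.
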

\begin{proof}
Starting from the scheme \eqref{240911_01}, it is classical to prove that the sequences $(\partial_t(\widehat{u}_{N,\eps}-\widehat{M}_{N,\eps}))_m$, $(|\nabla\uhnr|^{p-2}\nabla\uhnr)_m$, $(|\uhnr|^{p-2}\uhnr)_m$, $(\pe(\uhnr))_m$, $(\beta(\uhnr))_m$ and $(f^N)_m$ satisfy the following equality for any $\varphi$ in $V$, any $\xi$ in $L^{p}(0,T)$ and any $A\in \mathcal{F}$
\begin{align*}
&\erww{\mathds{1}_A\int_0^T\int_{D}\partial_t(\widehat{u}_{N,\eps}-\widehat{M}_{N,\eps})(t,x)\varphi(x)\xi(t)\,dt\,dx}\\
&+\erww{\mathds{1}_A\int_0^T\int_{D}\left(|\nabla\uhnr(t,x)|^{p-2}\nabla\uhnr(t,x)\cdot\nabla \varphi(x)+|\uhnr(t,x)|^{p-2}\uhnr(t,x)\varphi(x)\right)\xi(t)\,dx\,dt }\\
&+\erww{\mathds{1}_A\int_0^T\int_{D}\pe(\uhnr(t,x))\varphi(x)\xi(t)\,dx\,dt}\\
=&\erww{\mathds{1}_A\int_0^T\int_{D}\left(\beta(\uhnr(t,x))+f^N(t,x)\right)\varphi(x)\xi(t)\,dx\,dt},
\end{align*}
which can be expressed as
\begin{equation}\label{250929_03}
I_{1,m}+I_{2,m}+I_{3,m} = I_{4,m},
\end{equation}
where
\begin{align*}
I_{1,m}=&\erww{\mathds{1}_A\int_0^T\int_{D}\partial_t(\widehat{u}_{N,\eps}-\widehat{M}_{N,\eps})(t,x)\varphi(x)\xi(t)\,dt\,dx}\\
I_{2,m}=&\erww{\mathds{1}_A\int_0^T\int_{D}\left(|\nabla\uhnr(t,x)|^{p-2}\nabla\uhnr(t,x)\cdot\nabla \varphi(x)+|\uhnr(t,x)|^{p-2}\uhnr(t,x)\varphi(x)\right)\xi(t)\,dx\,dt }\\
I_{3,m}=&\erww{\mathds{1}_A\int_0^T\int_{D}\pe(\uhnr(t,x))\varphi(x)\xi(t)\,dx\,dt}\\
I_{4,m}=&\erww{\mathds{1}_A\int_0^T\int_{D}\left(\beta(\uhnr(t,x))+f^N(t,x)\right)\varphi(x)\xi(t)\,dx\,dt}.
\end{align*}
From Lemma \ref{251016_l2}, using the fact that $\mathds{1}_{A}\varphi \xi\in L^p(\Omega;L^{p}(0,T;V))=\left(L^{p'}(\Omega;L^{p'}(0,T;V^*))\right)'$ one gets directly the following limit for $I_{1,m}$
\begin{equation*}
\lim_{m\to\infty} I_{1,m}=\erww{\mathds{1}_A\int_0^T\int_{D} \partial_t\left(u(x)-\int_0^\cdot \mathcal{G}(s,x)\,dW(s)\right)(t) \varphi(x)\xi(t)\,dx\,dt}.
\end{equation*}
Using Lemma \ref{251006_l1}, we obtain
\begin{align*}
\lim_{m\to\infty} I_{2,m} = \erww{\mathds{1}_A\int_0^T\int_D \big(\vec{z}(t,x)\cdot \nabla\varphi(x)+ y(t,x)\varphi(x)\big)\xi(t)\,dx\,dt}. 
\end{align*}
From Lemma \ref{CVpeuhnr}, we get
\begin{align*}
\lim_{m\to\infty} I_{3,m}=\erww{\mathds{1}_A\int_0^T \int_{ D} \psi(t,x) \varphi(x)\xi(t)\,dx\,dt}.
\end{align*}
Thanks to Lemmas \ref{CVbetauhnl} and \ref{CVfhnl} we arrive at
\begin{align*}
\lim_{m\to\infty}I_{4,m}=\erww{\mathds{1}_A\int_0^T \int_{D} \left(\mathcal{B}(t,x)+f(t,x)\right)\varphi(x)\xi(t)\,dx\,dt}.
\end{align*}
Passing to the limit in \eqref{250929_03} and using all the convergence results, we obtain
\begin{align}\label{170326_a}
&\erww{\mathds{1}_A\int_0^T\int_{D} \partial_t\left(u(x)-\int_0^\cdot \mathcal{G}(s,x)\,dW(s)\right)(t) \varphi(x)\xi(t)\,dx\,dt}\nonumber\\
&+\erww{\mathds{1}_A\int_0^T\int_D \big(\vec{z}(t,x)\cdot \nabla\varphi(x)+ y(t,x)\varphi(x)\big)\xi(t)\,dx\,dt}\nonumber\\
&+\erww{\mathds{1}_A\int_0^T \int_{ D} \psi(t,x) \varphi(x)\xi(t)\,dx\,dt}\nonumber\\
=&\erww{\mathds{1}_A\int_0^T \int_{D} \left(\mathcal{B}(t,x)+f(t,x)\right)\varphi(x)\xi(t)\,dx\,dt}.
\end{align}
Then, identifying the integral $\int_D v(x)\phi(x)dx$ with the duality bracket $\langle v,\phi\rangle_{V^*,V}$, for any $v$ in $L^{p'}(D)$ and any $\phi\in V$, and introducing the slightly hand-waving notation $\dvz$ defined for any $\phi$ in $V$ by
\begin{align}\label{HWN}
\langle \dvz,\phi\rangle_{V^*,V}
&\ := -\int_D \vec{z}(x) \cdot \nabla \phi(x)\,dx,
\end{align}
we may write
\begin{align}\label{170326_b}
&\erww{\mathds{1}_A\int_0^T \left\langle\partial_t\Big(u-\int_0^\cdot \mathcal{G}(s)\,dW(s)\Big)(t), \varphi\right\rangle_{V^*,V}\xi(t)\,dt}\nonumber\\
=&\erww{\mathds{1}_A\int_0^T\langle \dvz(t)-y(t),\varphi\rangle_{V^*,V}\xi(t)\,dt}-\erww{\mathds{1}_A\int_0^T \left\langle \psi(t), \varphi\right\rangle_{V^*,V}\xi(t)\,dt}\nonumber\\
&+\erww{\mathds{1}_A\int_0^T \left\langle\mathcal{B}(t)+f(t),\varphi\right\rangle_{V^*,V}\xi(t)\,dt}.
\end{align}
Incidentally, it is worth noting that using this notation, we can claim the weak convergence of $(\plap\uhnr)_m$ towards $-\dvz+y$ in $L^{p'}(\Omega_T;V^*)$ as $m\to\infty$.
Using a separability argument, one gets that $d\mathds{P}\otimes dt$-a.e. in $\Omega_T$, the following equality holds in $V^*$
\begin{align}\label{251009_05bis}
\partial_t\left(u-\int_0^\cdot\mathcal{G}(s)\,dW(s)\right)(t) = \dvz(t)-y(t)-\psi(t)+\mathcal{B}(t)+f(t).
\end{align}
Recalling that \Pas, since $2\geq p'$, $\big(u-\int_0^\cdot\mathcal{G}(s)\,dW(s)\big) \in L^{p'}(0,T;L^2(D))$ and $\partial_t \big(u-\int_0^\cdot\mathcal{G}(s)\,dW(s) \big)\in L^{p'}(0,T;V^*)$, then (see, \textit{e.g.,} \cite{Roubicek}), $\big(u-\int_0^\cdot\mathcal{G}(s)\,dW(s)\big)$ is an element of $\mathscr{C}([0,T];V^*)$. Applying \cite[Lemme 2.2.1 p.44]{D}, one gets that \Pas \ and for any $t\in [0,T]$, the following equality holds in $V^*$
\begin{equation}\label{180326_b}
\Big(u-\int_0^\cdot\mathcal{G}(s)\,dW(s)\Big)(t)-\Big(u-\int_0^\cdot\mathcal{G}(s)\,dW(s)\Big)(0)
= \int_0^t \partial_t \big(u-\int_0^\cdot\mathcal{G}(\sigma)\,dW(\sigma) \big)(s)\,ds.
\end{equation}
By combining \eqref{251009_05bis} with \eqref{180326_b}, and recalling that from the continuity of the stochastic integral it follows that \Pas,\ $u\in\mathscr{C}([0,T];V^*)$, then one gets that \Pas \ and for any $t\in [0,T]$, in $V^*$
\begin{equation}\label{31032026_a}
u(t)=u(0)+\int_0^t\left(\dvz(s)-y(s)-\psi(s)+\mathcal{B}(s)+f(s)\right)\,ds+\int_0^t \mathcal{G}(s)\,dW(s).
\end{equation}
In particular, defining $w:=u-\int_0^\cdot\mathcal{G}(s)\,dW(s)$, since $\partial_t w$ is the limit in $L^2_{\mathcal{P}_T}(\Omega_T, V^*)$ of the backward difference quotient $w_h:=\frac{w(\cdot)-w(\cdot-h)}{h}$ for $h \to 0$,
$\partial_t w$ is therefore predictable with values in $V^*$. Owing to
\eqref{251009_05bis}, it then follows that $-\dvz+y$ is predictable with values in $V^*$, 
and from the It\^{o} formula in \cite[Theorem 4.2.5]{LR} we get that \eqref{31032026_a} holds in $L^2(D)$ and that $u\in L^2(\Omega;\mathscr{C}([0,T];L^2(D)))$. \\
To complete the proof of Proposition \ref{PTTL}, it is left to show that $u(0)=u_0$ $d\mathds{P}\otimes dx$-a.e. in $\Omega\times D$.
To do so, we fix $t\in[0,T]$ and choose test functions $\xi\in\mathscr{D}([0,t])$ and $\varphi\in V$.
By the continuity of the linear mapping $v\in V^*\mapsto \langle v,\varphi\rangle_{V^*,V}\in \R$, it is true that for any
$g\in L^1(0,T;V^*)$, $\int_0^T \langle g(r),\varphi\rangle_{V^*,V}dr=\langle \int_0^T g(r)dr,\varphi\rangle_{V^*,V}$.
Using this permutation result combined with integration by parts, one gets that \Pas,
\begin{align}\label{251010_01a}
&\int_0^{t}\left\langle\partial_r\left(\widehat{u}_{N,\eps}-\widehat{M}_{N,\eps}\right)(r)\xi(r),\varphi\right\rangle_{V^*,V}\,dr\nonumber\\
=\ &\left\langle \int_0^t\partial_r\left(\widehat{u}_{N,\eps}-\widehat{M}_{N,\eps}\right)(r)\xi(r)\,dr,\varphi\right\rangle_{V^*,V}\nonumber\\
=\ &-\int_0^t\int_D(\widehat{u}_{N,\eps}-\widehat{M}_{N,\eps})(r,x)\xi'(r)\varphi(x)\,dx\,dr\nonumber\\
&+\int_D\left((\widehat{u}_{N,\eps}-\widehat{M}_{N,\eps})(t,x)\xi(t)-u_0(x)\xi(0)\right)\varphi(x)\,dx.
\end{align}
We recall that, according to Lemma \ref{251016_l2}, the sequence $\big(\partial_t\big(\widehat{u}_{N,\eps}-\widehat{M}_{N,\eps}\big)\big)_m$ weakly converges towards $\partial_t\left(u-\int_0^{\cdot}\mathcal{G}(s)\,dW(s)\right)$ in $L^{p'}(\Omega;L^{p'}(0,T;V^*))$ and by Lemma \ref{251016_l3}, $(\widehat{u}_{N,\eps}(t))_{m_t}$ weakly converges towards $\chi_t$ in $L^2(\Omega\times D)$.
Moreover, owing to Lemma \ref{251002_l1}, $(\widehat{M}_{N,\eps})_m$ weakly converges towards $\int_0^{\cdot}\mathcal{G}(s)\,dW(s)$ in $L^2(\Omega;\mathscr{C}([0,T];L^2(D)))$. Since the operator
\begin{equation*}
\delta_t:L^2(\Omega;\mathscr{C}([0,T];L^2(D)))\to L^2(\Omega\times D), \quad v\mapsto v(t,\cdot)
\end{equation*}
is continuous and linear, it follows that $(\widehat{M}_{N,\eps}(t))_{m_t}$ weakly converges towards
$\int_0^t\mathcal{G}(s)\,dW(s)$ in $L^2(\Omega\times D)$.
On one hand, multiplying \eqref{251010_01a} with $\mathds{1}_A$ for $A\in\mathcal{F}$, taking expectation, passing to the limit with $m_t\to \infty$, we obtain
\begin{align}\label{251010_02a}
\begin{aligned}
&\erww{\mathds{1}_A\int_0^{t}\left\langle\partial_t\left(u-\int_0^\cdot\mathcal{G}(s)\,dW(s)\right)(r) \xi(r),\varphi\right\rangle_{V^*,V}\,dr}\\
=\ &-\erww{\mathds{1}_A\int_0^t\int_D \left(u(r,x)-\int_0^r\mathcal{G}(s,x)\,dW(s)\right)\xi'(r)\varphi(x)\,dx\,dr}\\
&+\erww{\mathds{1}_A\int_D\left(\left(\chi_t(x)-\int_0^t\mathcal{G}(s,x)\,dW(s)\right)\xi(t)-u_0(x)\xi(0)\right)\varphi(x)\,dx}.
\end{aligned}
\end{align}
On the other hand, since $u\in L^2(\Omega;\mathscr{C}([0,T];L^2(D)))$, using integration by parts on the left-hand side of \eqref{251010_02a}
and after rearranging the terms that do not cancel, we arrive at
\begin{equation}\label{251010_03a}
\erww{\mathds{1}_A\int_D \big(u(t,x)\xi(t)-u(0,x)\xi(0)\big)\varphi(x)\,dx}=\erww{\mathds{1}_A\int_D \left(\chi_t(x)\xi(t)-u_0(x)\xi(0)\right)\varphi(x)\,dx}
\end{equation}
In particular, by choosing $\xi\in\mathscr{D}([0,t])$ such that $\xi(t)=0$ and $\xi(0)=1$ in \eqref{251010_03a}, we find that $u(0)=u_0,$ $d\mathds{P}\otimes dx$-a.e. in $\Omega\times D$.
\end{proof}
\begin{lem}
For any fixed $t\in [0,T]$, the sequence $(\widehat{u}_{N,\eps}(t))_{m}$ converges weakly towards $u(t)$ in $L^2(\Omega\times D)$ as $m\to\infty$.
\end{lem}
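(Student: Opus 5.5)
The plan is to identify every possible weak subsequential limit of $(\widehat{u}_{N,\eps}(t))_m$ with $u(t)$. The conclusion then follows from uniqueness of the limit together with the standard subsequence principle. Fix $t\in[0,T]$. By Proposition \ref{bounds}, the sequence $(\widehat{u}_{N,\eps}(t))_m$ is bounded in $L^2(\Omega\times D)$: for $t\in[t_n,t_{n+1})$, $\widehat{u}_{N,\eps}(t)$ is a convex combination of $\upsn$ and $\upsnp$, and $\widehat{u}_{N,\eps}(T)=u^N_\eps$. Hence every subsequence has a further subsequence $(\widehat{u}_{N,\eps}(t))_{m'}$ that converges weakly in $L^2(\Omega\times D)$ to some $\chi$. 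If every such $\chi$ equals $u(t)$, the whole sequence converges weakly to $u(t)$, since $L^2(\Omega\times D)$ is a Hilbert space and bounded sequences whose weakly convergent subsequences all share one limit converge weakly to it.

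To identify $\chi$, I would repeat the argument at the end of the proof of Proposition \ref{PTTL} along the subsequence $m'$ instead of $m_t$. Take $\varphi\in V$, $A\in\mathcal{F}$ and $\xi\in\mathscr{D}([0,t])$, and start from the integration-by-parts identity \eqref{251010_01a}. Multiply it by $\mathds{1}_A$, take expectations and pass to the limit $m'\to\infty$. The ingredients are:
\begin{itemize}
\item Lemma \ref{251016_l2}, for the time-derivative term;
\item Lemmas \ref{addreg u} and \ref{251002_l1}, for the term with $\xi'$, since the weak convergence of $\widehat{u}_{N,\eps}$ and $\widehat{M}_{N,\eps}$ holds along the full sequence and hence along any subsequence;
\item the continuity of the evaluation map $\delta_t$ combined with Lemma \ref{251002_l1}, which gives $\widehat{M}_{N,\eps}(t)\rightharpoonup\int_0^t\mathcal{G}\,dW$ weakly in $L^2(\Omega\times D)$;
\item the assumed convergence $\widehat{u}_{N,\eps}(t)\rightharpoonup\chi$.
\end{itemize}
This yields \eqref{251010_02a} with $\chi_t$ replaced by $\chi$.

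Next, since $u\in L^2(\Omega;\mathscr{C}([0,t];L^2(D)))$ and $u(0)=u_0$ by Proposition \ref{PTTL}, integrating by parts on the left-hand side leads, as in \eqref{251010_03a}, to
\begin{align*}
\erww{\mathds{1}_A\int_D \big(u(t,x)-\chi(x)\big)\xi(t)\varphi(x)\,dx}=0 .
\end{align*}
The integration by parts is justified by \eqref{180326_b}. Choosing $\xi$ with $\xi(t)=1$ gives $\erww{\mathds{1}_A\int_D (u(t)-\chi)\varphi\,dx}=0$ for all $A\in\mathcal{F}$ and $\varphi\in V$. Because $V$ is dense in $L^2(D)$, this forces $\chi=u(t)$ almost everywhere in $\Omega\times D$.

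The main obstacle is the endpoint case $t=0$, where $\xi(0)$ and $\xi(t)$ cannot be chosen independently; there I would argue directly instead. That case is trivial, since $\widehat{u}_{N,\eps}(0)=u_0=u(0)$. The remaining care is to check that the integration by parts of the limiting term is legitimate, i.e. that the identity for $u-\int_0^\cdot\mathcal{G}\,dW$ holds in $V^*$ with a $\mathscr{C}([0,T];V^*)$ representative that coincides with the $L^2(D)$-continuous version. This is guaranteed by the continuity established in Proposition \ref{PTTL}.
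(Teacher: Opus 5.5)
Your proposal is correct and follows the paper's own argument: the paper likewise tests \eqref{251010_03a} with $\varphi\in V$, $A\in\mathcal{F}$ and $\xi$ satisfying $\xi(t)=1$, $\xi(0)=0$ to conclude $\chi_t=u(t)$. Your explicit subsequence principle and your separate treatment of $t=0$ (where $\widehat{u}_{N,\eps}(0)=u_0=u(0)$) make rigorous the paper's brief passage from the subsequence $(m_t)$ to ``the whole sequence''.
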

\begin{proof} We fix $t\in[0,T]$, $A\in \mathcal{F}$ and choose in \eqref{251010_03a} test functions $\varphi\in V$ and $\xi\in\mathscr{D}([0,t])$ such that $\xi(t)=1$ and $\xi(0)=0$ which leads us to
\begin{equation*}
\erww{\mathds{1}_A\int_D u(t,x)\varphi(x)\,dx}=\erww{\mathds{1}_A\int_D \chi_t(x)\varphi(x)\,dx}.
\end{equation*}
As a consequence, we find that for any $t\in [0,T]$, $\chi_t=u(t)$ \Pas \ and a.e in $D$, and that the whole sequence $(\widehat{u}_{N,\eps}(t))_m$ converges weakly to $u(t)$ in $L^2(\Omega\times D)$ as $m\to\infty$.
\end{proof}
\subsection{Identification of weak limits coming from non-linear terms}
Now, we have all the necessary tools for the identification of $\psi$, $\psi_1$, $\psi_2$, $-\diver\vec{z}+y$, $\mathcal{G}$ and $\mathcal{B}$, and for completing the proof of Theorem \ref{main}. This identification procedure will be possible by combining monotonicity properties of the $p$-Laplace operator with computations developed in \cite[Proposition 5.12]{BSVZ2025} and adapted to our case.
For this reason, let us mention that the beginning of the proof of Proposition \ref{PISL} below will be similar to the one of \cite[Proposition 5.12]{BSVZ2025} but for the sake of clarity and completeness, most of the computations will be detailed here.
\begin{prop}\label{PISL}
The pair $(u, \psi)$, where $u$ and $\psi$ are introduced in Lemmas \ref{addreg u} and \ref{CVpeuhnr}, respectively, is a solution of Problem \eqref{equation} in the sense of Definition \ref{solution}.
\end{prop}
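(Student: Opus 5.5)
The plan is to first establish the constraint $0\le u\le 1$. Then I would prove strong convergence of $(\uhnr)_m$ and $(\uhnl)_m$ towards $u$ in $L^2(\Omega_T;L^2(D))$ by a monotonicity argument. With strong convergence in hand, the nonlinear weak limits $-\dvz+y$, $\mathcal{B}$, $\mathcal{G}$, $\psi$ are identified one by one. Since $u\in L^2(\Omega;\mathscr{C}([0,T];L^2(D)))$ and $u\in L^p_{\mathcal{P}_T}(\Omega_T;V)$ are already given by Proposition \ref{PTTL} and Lemma \ref{addreg u}, the identifications $-\dvz+y=\plap u$, $\mathcal{B}=\beta(u)$, $\mathcal{G}=G(u)$, $\psi\in\partial I_{[0,1]}(u)$ turn \eqref{251009_04} into \eqref{130426_b}, which proves the proposition.

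\textbf{Constraint.} By Lemma \ref{CVSppapn}, $(\uhnr)^-\to0$ and $(\uhnr-1)^+\to0$ strongly. The maps $v\mapsto\|v^-\|$ and $v\mapsto\|(v-1)^+\|$ are convex and continuous on $L^2(\Omega_T;L^2(D))$, hence weakly lower semicontinuous. Together with $\uhnr\rightharpoonup u$ (Lemma \ref{addreg u}), this gives $u^-=0$ and $(u-1)^+=0$, that is $0\le u\le1$ a.e. In particular $\pe(u)=0$ for every $\eps$.

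\textbf{Strong convergence (main obstacle).} I will compare a discrete energy identity with the It\^o formula for the limit equation. Multiply \eqref{240911_01} by $\upsnp$, use \eqref{term1}, sum, take expectations, and keep the martingale term in the exact form $\erww{(G(\upsk)\Delta_{k+1}W,\upskp-\upsk)}$. This gives an identity for
\[
\tfrac12\erww{\|\ups^{n}\|^2_{L^2(D)}}+\erww{\int_0^{t_n}\big(\langle\plap\uhnr,\uhnr\rangle+(\pe(\uhnr),\uhnr)\big)\,ds}.
\]
Its right-hand side involves $\erww{\int\|G(\uhnl)\|^2_{\HS}}$ and $\erww{\int(\beta(\uhnr)+f^N,\uhnr)}$. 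On the other side, the It\^o formula of \cite[Theorem 4.2.5]{LR} applied to \eqref{251009_04} gives the analogous identity for $u$, with $-\dvz+y$, $\psi$, $\mathcal{G}$, $\mathcal{B}$.

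The Lipschitz terms in $\beta$ and $G$ cannot be passed to the limit by weak convergence. To control them, I would rewrite everything as an estimate for the difference. The key expression is
\[
\erww{\int_0^T e^{-Ks}\Big(\langle\plap\uhnr-\plap u,\uhnr-u\rangle+(\pe(\uhnr)-\pe(u),\uhnr-u)\Big)\,ds},
\]
where the weight $e^{-Ks}$ is included via the product rule (equivalently, a weighted discrete summation). The second summand is nonnegative by monotonicity of $\pe$, and $\pe(u)=0$. The $\beta$ and $G$ contributions are then bounded by $(L_\beta+L_{G,2}^2)\,\|\uhnr-u\|^2$, using Lemma \ref{lemma_G1_G3} and \eqref{210824_05} to swap $\uhnl$ for $\uhnr$. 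Choosing $K$ large absorbs them into the weighted $L^2$ term coming from differentiating $e^{-Ks}$.

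The weak-limit terms are passed to the limit as follows:
\begin{itemize}
\item[$\bullet$] the cross terms, by Lemmas \ref{251006_l1}, \ref{CVpeuhnr}, \ref{CVguhnl} and \ref{CVbetauhnl};
\item[$\bullet$] the endpoint term, by weak lower semicontinuity, since $\widehat u_{N,\eps}(t)\rightharpoonup u(t)$ and $\widehat u_{N,\eps}(t_n)=\ups^n$;
\item[$\bullet$] the remainder $\erww{\sum_k\|\upskp-\upsk\|^2}$, which has a favorable sign;
\item[$\bullet$] the It\^o correction $\erww{\sum_k\|G(\upsk)\Delta_{k+1}W\|^2}$, which is matched against $\erww{\int\|G(\uhnl)\|^2}$ by the It\^o isometry.
\end{itemize}
The outcome I aim for is
\[
\limsup_m\Big(\erww{\int_0^T e^{-Ks}\|\uhnr-u\|^2}+\erww{\int_0^T e^{-Ks}\langle\plap\uhnr-\plap u,\uhnr-u\rangle}\Big)\le0.
\]
Carrying out this bookkeeping for the stochastic term is the delicate part. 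In particular, the remainder $\erww{(G(\upsk)\Delta_{k+1}W,\upskp-\upsk)}$ must be shown to combine with $\tfrac12\|\upskp-\upsk\|^2$ into something asymptotically nonnegative, through Young's inequality with exactly the constants above.

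\textbf{Identification.} From the $\limsup$ bound, $\uhnr\to u$ strongly in $L^2(\Omega_T;L^2(D))$, and by \eqref{210824_05} also $\uhnl\to u$. Consequently $\mathcal{B}=\beta(u)$ and $\mathcal{G}=G(u)$ by the Lipschitz continuity of $\beta$ and Lemma \ref{lemma_G1_G3}. The monotonicity defect tends to $0$, so Minty's trick for the maximal monotone operator $\plap:V\to V^*$ on $L^p(\Omega_T;V)$ gives $-\dvz+y=\plap u$.

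For $\psi$, recall $\psi=\psi_1+\psi_2$ with $\psi_1\le0\le\psi_2$ (Remark \ref{cvtpsi2}). By strong-times-weak convergence,
\[
\erww{\int\psi_1u}=\lim\erww{\int\tfrac{((\uhnr)^-)^2}{\eps}}\ge0 .
\]
On the other hand $\psi_1u\le0$ pointwise, so $\psi_1u=0$ a.e. Similarly, $\psi_2(u-1)=\lim\frac{((\uhnr-1)^+)^2}{\eps}\ge0$ in the integrated sense while $\psi_2(u-1)\le0$ pointwise, so $\psi_2(u-1)=0$ a.e. Hence $\psi=0$ where $0<u<1$, $\psi\le0$ where $u=0$, and $\psi\ge0$ where $u=1$, i.e.\ $\psi\in\partial I_{[0,1]}(u)$.
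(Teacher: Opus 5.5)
Your proposal is correct, and its backbone is the paper's. Both compare a weighted discrete energy inequality with the It\^o formula for \eqref{251009_04}. In that inequality, $\erww{(G(\upsk)\Delta_{k+1}W,\upskp-\upsk)}$ is absorbed by Young's inequality; the step you flag as delicate is exactly the paper's estimate \eqref{2}, which closes with the constants $\frac12,\frac12$ and the It\^o isometry.

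The difference lies in how the limit is organized. The paper proceeds as follows:
\begin{itemize}
\item It discards the nonnegative block $c\|\uhnr-u\|^2-\|G(\uhnr)-G(u)\|^2-2(\beta(\uhnr)-\beta(u),\uhnr-u)$.
\item It evaluates $\lim\erww{\int\pe(\uhnr)\uhnr}=\erww{\int\psi_2}$ via Remark \ref{cvtpsi2}.
\item It therefore only reaches the $\limsup$ inequality \eqref{020426_e} for $\langle\plap\uhnr,\uhnr\rangle$, together with \eqref{120526_a}.
\item Strong convergence in $L^p(\Omega_T;V)$ is then extracted from the uniform monotonicity inequality \eqref{130426_a}, and only afterwards are $\mathcal{G}$ and $\mathcal{B}$ identified.
\item Finally, $\psi\in\partial I_{[0,1]}(u)$ follows from the sign of $\psi_2-\psi u$ and an appeal to \cite{BSVZ2025}.
\end{itemize}

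You instead establish $0\leq u\leq 1$ first, so that $\pe(u)=0$. This lets you split $(\pe(\uhnr),\uhnr)$ into a nonnegative monotone difference plus $(\pe(\uhnr),u)\to(\psi,u)$, and the latter cancels exactly against the $\psi$-term of the It\^o formula. You also keep the coercive term proportional to $\|\uhnr-u\|^2$ instead of discarding it.

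What your route buys:
\begin{itemize}
\item Strong convergence in $L^2(\Omega_T;L^2(D))$ and a vanishing monotonicity defect come out in one stroke.
\item Minty's trick, using only monotonicity and hemicontinuity of $\plap$, suffices for $-\dvz+y=\plap u$.
\item The identification of $\psi$ becomes a self-contained complementarity argument rather than a citation: $\psi_1u=0$ and $\psi_2(u-1)=0$ follow from strong-times-weak limits.
\end{itemize}

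The one extra ingredient you need is the final-time weak convergence $\ups^N=\widehat{u}_{N,\eps}(T)\rightharpoonup u(T)$ for the endpoint term. This is available from the lemma following Proposition \ref{PTTL}. The paper avoids it by integrating once more in $t$ and using weak lower semicontinuity in $L^2(\Omega_T;L^2(D))$. Conversely, the paper's stronger conclusion of convergence in $L^p(\Omega_T;V)$ follows at no cost from your vanishing defect and \eqref{130426_a}.
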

\begin{proof}
Set $k\in\{0,\ldots,N-1\}$, $\eps\in\R_+^\star$ and an arbitrary constant $c>0$, multiply the scheme \eqref{240911_01} with $e^{-c t_k} \upskp$, use the identity $a(a-b)=\frac{1}{2}(a^2-b^2+(a-b)^2)$, integrate over $D$ and take expectation to arrive at
\begin{align}\label{310326_a}
&\frac{1}{2} e^{- c t_k} \left( \erwb \|\upskp\|_{L^2(D)}^2 \erwe - \erwb \|\upsk\|_{L^2(D)}^2 \erwe + \erwb \|\upskp-\upsk\|_{L^2(D)}^2 \erwe \right)\nonumber\\
&+ e^{- c t_k} \tau \erwb \langle A_\eps(\upskp), \upskp \rangle_{V^*,V} \erwe\nonumber\\
=\ & e^{- c t_k} \erwb (G(\upsk) \Delta_{k+1} W, \upskp)_{L^2(D)} \erwe + e^{- c t_k} \tau \erwb (\beta(\upskp) + f_k, \upskp)_{L^2(D)} \erwe,
\end{align}
where $A_\eps = \plap + \psi_\eps.$
By applying Young's inequality and \cite[Proposition 4.20]{DPZ14} as for obtaining the estimate \eqref{term4}, we get
\begin{equation}\label{2}
\erwb (G(\upsk)\Delta_{k+1}W,\upskp)_{L^2(D)} \erwe \leq \frac{\tau}{2}\erwb \| G(\upsk) \|_{\HS}^2 \erwe + \frac{1}{2}\erwb \| \upskp-\upsk\|_{L^2(D)}^2 \erwe.
\end{equation}
Plugging \eqref{2} into \eqref{310326_a} and summing over $k\in\{0,\ldots,n\}$ for any $n\in\{0,\ldots,N-1\}$ yields
\begin{align}\label{4}
&\frac{1}{2} \sum_{k=0}^n e^{- c t_k} \left( \erwb \|\upskp\|_{L^2(D)}^2 \erwe - \erwb \|\upsk\|_{L^2(D)}^2 \erwe \right)
+\sum_{k=0}^n e^{- c t_k} \tau \erwb \langle A_\eps(\upskp), \upskp \rangle_{V^*,V} \erwe \nonumber\\
\leq\ & \sum_{k=0}^n e^{- c t_k} \frac{\tau}{2}\erwb \| G(\upsk) \|_{\HS}^2 \erwe
+ \sum_{k=0}^n e^{- c t_k} \tau \erwb (\beta(\upskp) + f_k, \upskp)_{L^2(D)} \erwe.
\end{align}
Let us study each term of \eqref{4} separately. \\
$\bullet$ Firstly, using the notation $t_{-1} = -\tau$, we can write that for any $k\in \{0,\ldots,n\}$
\begin{align*}
e^{- c t_k} \erwb \| \upskp \|_{L^2(D)}^2 - \| \upsk \|_{L^2(D)}^2 \erwe
=\ & e^{- c t_k} \erwb \| \upskp \|_{L^2(D)}^2 \erwe
- e^{- c t_{k-1}} \erwb \| \upsk \|_{L^2(D)}^2 \erwe\\
&- (e^{- c t_k} - e^{- c t_{k-1}}) \erwb \| \upsk \|_{L^2(D)}^2 \erwe.
\end{align*}
On one hand, we have
\begin{align*}
&\sum_{k=0}^n \left( e^{- c t_k} \erwb \| \upskp \|_{L^2(D)}^2 \erwe
- e^{- c t_{k-1}} \erwb \| \upsk \|_{L^2(D)}^2 \erwe \right)\\
=\ & e^{- c t_n} \erwb \| \upsnp \|_{L^2(D)}^2 \erwe
- e^{c \tau} \erwb \| u_0 \|_{L^2(D)}^2 \erwe.
\end{align*}
On the other hand, using the equality $e^{- c t_{k+1}} - e^{- c t_k} = \int_{t_k}^{t_{k+1}} (- c) e^{- c s}\,ds$, we get that
\begin{align*}
&\sum_{k=0}^n (e^{- c t_k} - e^{- c t_{k-1}}) \erwb \| \upsk \|_{L^2(D)}^2 \erwe\\
=\ & (1 - e^{c \tau}) \erwb \| u_0 \|_{L^2(D)}^2 \erwe
+ \sum_{k=0}^{n-1} \int_{t_k}^{t_{k+1}} (- c) e^{- c s}\,ds \erwb \| \upskp \|_{L^2(D)}^2 \erwe\\
=\ & (1 - e^{c \tau}) \erwb \| u_0 \|_{L^2(D)}^2 \erwe
- c \int_{0}^{t_n} e^{- c s} \erwb \| \uhnr(s) \|_{L^2(D)}^2 \erwe\,ds.
\end{align*}
Hence, with the inequality $e^{c \tau} \geq 1$,
\begin{align}\label{5}
&\frac{1}{2} \sum_{k=0}^n e^{- c t_k} \erwb \| \upskp \|_{L^2(D)}^2 - \| \upsk \|_{L^2(D)}^2 \erwe\\
\geq \ & \frac{1}{2} \left( e^{- c t_n} \erwb \| \upsnp \|_{L^2(D)}^2 \erwe
- e^{c \tau} \erwb \| u_0 \|_{L^2(D)}^2 \erwe \right)
+ \frac{c}{2} \int_{0}^{t_k} e^{- c s} \erwb \| \uhnr(s) \|_{L^2(D)}^2 \erwe\,ds.\nonumber
\end{align}
$\bullet$ Secondly, since $\pe$ is monotone we have
\begin{align*}
&\erwb\langle A_\eps(\upskp), \upskp \rangle_{V^*,V}\erwe = \erwb\int_D \left( |\nabla \upskp(x)|^p + |\upskp(x)|^p + \pe(\upskp(x)) \upskp(x) \right)\,dx\erwe \geq 0,
\end{align*}
and note that for all $s\in[t_k,t_{k+1}]$, $e^{- c t_k} \geq e^{- c s}$ so
\begin{align*}
&\tau e^{- c t_k} \erwb \langle A_\eps(\upskp), \upskp \rangle_{V^*,V} \erwe \geq \int_{t_k}^{t_{k+1}} e^{- c s} \erwb \langle A_\eps(\upskp), \upskp \rangle_{V^*,V} \erwe\,ds,
\end{align*}
which leads to
\begin{align}\label{3}
&\sum_{k=0}^n \tau e^{- c t_k} \erwb \langle A_\eps(\upskp), \upskp \rangle_{V^*,V} \erwe \geq \int_{0}^{t_{n+1}} e^{- c s} \erwb \langle A_\eps(\upskp), \upskp \rangle_{V^*,V} \erwe\,ds.
\end{align}
$\bullet$ Thirdly, we have
\begin{align*}
\sum_{k=0}^n e^{- c t_k} \frac{\tau}{2} \erwb \| G(\upsk) \|_{\HS}^2 \erwe
&= \frac{\tau}{2} \erwb \| G(u_0) \|_{\HS}^2 \erwe
+ \frac{\tau}{2} \sum_{k=0}^{n-1} e^{- c t_{k+1}} \erwb \| G(\upskp) \|_{\HS}^2 \erwe,
\end{align*}
and since for all $s\in[t_k,t_{k+1}], e^{- c t_{k+1}} \leq e^{- c s}$, we obtain
\begin{align}\label{6}
&\sum_{k=0}^n e^{- c t_k} \frac{\tau}{2} \erwb \| G(\upsk) \|_{\HS}^2 \erwe\nonumber\\
\leq\ & \frac{\tau}{2} \erwb \| G(u_0) \|_{\HS}^2 \erwe
+ \frac{1}{2} \int_0^{t_n} e^{- c s} \erwb \| G(\uhnr(s)) \|_{\HS}^2 \erwe\,ds.
\end{align}
$\bullet$ Fourthly, following \cite[Proposition 5.12]{BSVZ2025}, one gets the following estimate for the last term
\begin{align}\label{7}
&\sum_{k=0}^n e^{- c t_k} \tau \erwb ( \beta(\upskp) + f_k, \upskp )_{L^2(D)} \erwe\nonumber\\
\leq\ & \int_0^{t_{n+1}} e^{- c s} \erwb ( \beta(\uhnr(s)) + f^N(s), \uhnr(s) )_{L^2(D)} \erwe\,ds \\
&+ c \tau\left( L_\beta \| \uhnr \|_{L^2(\Omega;L^2(0,T;L^2(D)))}^2 + \| f \|_{L^2(\Omega;L^2(0,T;L^2(D)))} \| \uhnr\|_{L^2(\Omega;L^2(0,T;L^2(D)))} \right).\nonumber
\end{align}
Inserting \eqref{5}, \eqref{3}, \eqref{6} and \eqref{7} into \eqref{4}, and multiplying by $2$, we obtain for all $n\in\{0,\ldots,N-1\}$,
\begin{align*}
&e^{- c t_n} \erwb \| \upsnp \|_{L^2(D)}^2 \erwe
- e^{c \tau} \erwb \| u_0 \|_{L^2(D)}^2 \erwe
+ c \int_{0}^{t_n} e^{- c s} \erwb \| \uhnr(s) \|_{L^2(D)}^2 \erwe\,ds\\
&+ 2 \int_0^{t_{n+1}} e^{- c s} \erwb \langle A_\eps(\upskp), \upskp \rangle_{V^*,V} \erwe\,ds\\
\leq\ &
\tau \erwb \| G(u_0) \|_{\HS}^2 \erwe
+ \int_0^{t_n} e^{- c s} \erwb \| G(\uhnr(s)) \|_{\HS}^2 \erwe\,ds\\
&+ 2 \int_0^{t_{n+1}} e^{- c s} \erwb ( \beta(\uhnr(s)) + f^N(s), \uhnr(s) )_{L^2(D)} \erwe\,ds\\
&+2 c \tau\left( L_\beta\| \uhnr \|_{L^2(\Omega;L^2(0,T;L^2(D)))}^2 + \| f \|_{L^2(\Omega;L^2(0,T;L^2(D)))} \| \uhnr\|_{L^2(\Omega;L^2(0,T;L^2(D)))} \right).
\end{align*}
For $t\in[t_n,t_{n+1})$, since $e^{- c t}\leq e^{- c t_n}$ and $(t-\tau)^{+}\leq t_n$, one gets thanks to the non-negativity of the concerned integrands
\begin{align*}
&e^{- c t} \erwb \| \uhnr(t) \|_{L^2(D)}^2 \erwe
- e^{c \tau} \erwb \| u_0 \|_{L^2(D)}^2 \erwe + c \int_{0}^{(t-\tau)^{+}} e^{- c s} \erwb \| \uhnr(s) \|_{L^2(D)}^2 \erwe\,ds\\
&+ 2 \int_0^t e^{- c s} \erwb \langle A_\eps(\uhnr(s)), \uhnr(s) \rangle_{V^*,V} \erwe\,ds\nonumber\\
\leq\ &
\tau \erwb \| G(u_0) \|_{\HS}^2 \erwe
+ \int_0^{t} e^{- c s} \erwb \| G(\uhnr(s)) \|_{\HS}^2 \erwe\,ds\\
&+ 2 \int_0^{t} e^{- c s} \erwb \big( \beta(\uhnr(s)) + f^N(s), \uhnr(s) \big)_{L^2(D)} \erwe\,ds\\
&+ 2 \int_t^{t_{n+1}} e^{- c s} \erwb \big( \beta(\uhnr(s)) + f^N(s), \uhnr(s) \big)_{L^2(D)} \erwe\,ds\\
&+2 c \tau\left( L_\beta \| \uhnr \|_{L^2(\Omega;L^2(0,T;L^2(D)))}^2 + \| f \|_{L^2(\Omega;L^2(0,T;L^2(D)))} \| \uhnr\|_{L^2(\Omega;L^2(0,T;L^2(D)))} \right).
\end{align*}
Thanks to the constant $K_0>0$ introduced in Proposition \ref{bounds}, we have
\begin{align*}
&c \int_{(t-\tau)^{+}}^t e^{- c s} \erwb \| \uhnr(s) \|_{L^2(D)}^2 \erwe\,ds
+2 \int_t^{t_{n+1}} e^{- c s} \erwb \big( \beta(\uhnr(s)) + f^N(s), \uhnr(s) \big)_{L^2(D)} \erwe\,ds\\
\leq \ & \tau(c+2L_\beta)K_0 + 2\sqrt{\tau}\| f \|_{L^2(\Omega;L^2(0,T;L^2(D)))}\sqrt{K_0} ,
\end{align*}
and using the fact that $-\int_{0}^{(t-\tau)^{+}}=-\int_0^t+\int_{(t-\tau)^{+}}^t$, one gets 
\begin{align}\label{020426_a}
&e^{- c t} \erwb \| \uhnr(t) \|_{L^2(D)}^2 \erwe
- e^{c \tau} \erwb \| u_0 \|_{L^2(D)}^2 \erwe + 2 \int_0^t e^{- c s} \erwb \langle A_\eps(\uhnr(s)), \uhnr(s) \rangle_{V^*,V} \erwe\,ds\nonumber\\
\leq\ &
\tau \erwb \| G(u_0) \|_{\HS}^2 \erwe
+ \int_0^{t} e^{- c s} \erwb \| G(\uhnr(s)) \|_{\HS}^2 \erwe\,ds\nonumber\\
&+ 2 \int_0^{t} e^{- c s} \erwb \big( \beta(\uhnr(s)) + f^N(s), \uhnr(s) \big)_{L^2(D)} \erwe\,ds\nonumber\\
&- c \int_{0}^{t} e^{- c s} \erwb \| \uhnr(s) \|_{L^2(D)}^2 \erwe\,ds\\
&+\tau(c+2L_\beta)K_0 + 2\sqrt{\tau}\| f \|_{L^2(\Omega;L^2(0,T;L^2(D)))}\sqrt{K_0}\nonumber\\
&+2 c \tau\left( L_\beta\| \uhnr \|_{L^2(\Omega;L^2(0,T;L^2(D)))}^2 + \| f \|_{L^2(\Omega;L^2(0,T;L^2(D)))}\| \uhnr\|_{L^2(\Omega;L^2(0,T;L^2(D)))} \right).\nonumber
\end{align}
Following \cite{BSVZ2025}, we consider the three following expansions for the right hand side terms of \eqref{020426_a} depending on $\uhnr$:
\begin{align*}
&\int_0^t e^{- c s} \erwb \| G(\uhnr(s)) \|_{\HS}^2 \erwe\,ds\\
=\ &
\int_0^t e^{- c s} \erwb \| G(\uhnr(s)) - G(u(s)) \|_{\HS}^2 \erwe\,ds\\
&+ 2 \int_0^t e^{- c s} \erwb (G(\uhnr(s)), G(u(s)))_{\HS} \erwe\,ds
- \int_0^t e^{- c s} \erwb \| G(u(s)) \|_{\HS}^2 \erwe\,ds,
\end{align*}
and
\begin{align*}
&2 \int_0^t e^{- c s} \erwb (\beta(\uhnr(s)), \uhnr(s))_{L^2(D)} \erwe\,ds\\
=\ &
2 \int_0^t e^{- c s} \erwb (\beta(\uhnr(s)) - \beta(u(s)), \uhnr(s) - u(s))_{L^2(D)} \erwe\,ds\\
&+ 2 \int_0^t e^{- c s} \erwb (\beta(\uhnr(s)), u(s))_{L^2(D)} \erwe\,ds
+ 2 \int_0^t e^{- c s} \erwb (\beta(u(s)), \uhnr(s) - u(s))_{L^2(D)} \erwe\,ds,
\end{align*}
and
\begin{align*}
&- c \int_0^t e^{- c s} \erwb \| \uhnr(s) \|_{L^2(D)}^2 \erwe\,ds\\
=\ &
- c \int_0^t e^{- c s} \erwb \| \uhnr(s) - u(s) \|_{L^2(D)}^2 \erwe\,ds\\
&- 2 c \int_0^t e^{- c s} \erwb (\uhnr(s), u(s))_{L^2(D)} \erwe\,ds
+ c \int_0^t e^{- c s} \erwb \| u(s) \|_{L^2(D)}^2 \erwe\,ds.
\end{align*}
For the rest of the proof, we choose a constant $c\geq 0$ with $ c> L_{G,2}^2 + 2 L_\beta$. In this way, we obtain
\begin{align*}
&\int_0^t e^{- c s} \erwb \| G(\uhnr(s)) - G(u(s)) \|_{\HS}^2 \erwe\,ds
- c \int_0^t e^{- c s} \erwb \| \uhnr(s) - u(s) \|_{L^2(D)}^2 \erwe\,ds\\
&+ 2 \int_0^t e^{- c s} \erwb (\beta(\uhnr(s)) - \beta(u(s)), \uhnr(s) - u(s))_{L^2(D)} \erwe\,ds
\leq 0.
\end{align*}
Using all of this and going back to \eqref{020426_a}, one gets for any $t\in[t_n,t_{n+1})$
\begin{align*}
&e^{- c t} \erwb \| \uhnr(t) \|_{L^2(D)}^2 \erwe 
+ 2 \int_0^t e^{- c s} \erwb \langle A_\eps(\uhnr(s)), \uhnr(s) \rangle_{V^*,V} \erwe\,ds\nonumber\\
\leq\ & 2 \int_0^t e^{- c s} \erwb (G(\uhnr(s)), G(u(s)))_{\HS} \erwe\,ds
- \int_0^t e^{- c s} \erwb \| G(u(s)) \|_{\HS}^2 \erwe\,ds
\nonumber\\
&+2\int_0^t e^{- c s} \erwb (\beta(\uhnr(s)), u(s))_{L^2(D)} \erwe\,ds
+ 2\int_0^t e^{- c s} \erwb (\beta(u(s)), \uhnr(s) - u(s))_{L^2(D)} \erwe\,ds,\nonumber\\
&- 2 c \int_0^t e^{- c s} \erwb (\uhnr(s), u(s))_{L^2(D)} \erwe\,ds
+ c \int_0^t e^{- c s} \erwb \| u(s) \|_{L^2(D)}^2 \erwe\,ds\nonumber\\
&+ 2 \int_0^{t} e^{- c s} \erwb \big(f^N(s), \uhnr(s) \big)_{L^2(D)} \erwe\,ds
+ \erwb \| u_0 \|_{L^2(D)}^2 \erwe
+ (e^{c \tau}-1) \erwb \| u_0 \|_{L^2(D)}^2 \erwe \nonumber\\
&+ \tau \erwb \| G(u_0) \|_{\HS}^2 \erwe
+ \tau(c+2L_\beta)K_0+2\sqrt{\tau}\| f \|_{L^2(\Omega;L^2(0,T;L^2(D)))}\sqrt{K_0}\nonumber\\
&+2 c \tau\left( L_\beta\| \uhnr \|_{L^2(\Omega;L^2(0,T;L^2(D)))}^2 + \| f \|_{L^2(\Omega;L^2(0,T;L^2(D)))}\| \uhnr\|_{L^2(\Omega;L^2(0,T;L^2(D)))} \right).\nonumber
\end{align*}
Then, integrating between $t_n$ and $t_{n+1}$ and summing over $k\in\{0,\ldots,N-1\}$ leads us to
\begin{align*}
&\int_0^Te^{- c t} \erwb \| \uhnr(t) \|_{L^2(D)}^2 \erwe\,dt
+ 2 \int_0^T\int_0^t e^{- c s} \erwb \langle A_\eps(\uhnr(s)), \uhnr(s) \rangle_{V^*,V} \erwe\,ds\,dt\nonumber\\
\leq\ & 2 \int_0^T\int_0^t e^{- c s} \erwb (G(\uhnr(s)), G(u(s)))_{\HS} \erwe\,ds\,dt\nonumber\\
&- \int_0^T\int_0^t e^{- c s} \erwb \| G(u(s)) \|_{\HS}^2 \erwe\,ds\,dt,
\nonumber\\
&+2\int_0^T\int_0^t e^{- c s} \erwb (\beta(\uhnr(s)), u(s))_{L^2(D)} \erwe\,ds\,dt\nonumber\\
&+ 2\int_0^T\int_0^t e^{- c s} \erwb (\beta(u(s)), \uhnr(s) - u(s))_{L^2(D)} \erwe\,ds\,dt\nonumber\\
&- 2 c \int_0^T\int_0^t e^{- c s} \erwb (\uhnr(s), u(s))_{L^2(D)} \erwe\,ds\,dt
+ c \int_0^T\int_0^t e^{- c s} \erwb \| u(s) \|_{L^2(D)}^2 \erwe\,ds\,dt\nonumber\\
&+ 2 \int_0^T\int_0^{t} e^{- c s} \erwb \big(f^N(s), \uhnr(s) \big)_{L^2(D)} \erwe\,ds\,dt\nonumber\\
&+ \int_0^T\erwb \| u_0 \|_{L^2(D)}^2 \erwe\,dt
+ (e^{c \tau}-1) \erwb \| u_0 \|_{L^2(D)}^2 T \erwe \nonumber\\
&+ \tau \erwb \| G(u_0) \|_{\HS}^2 \erwe T
+ \tau(c+2L_\beta)K_0T +2\sqrt{\tau}\| f \|_{L^2(\Omega;L^2(0,T;L^2(D)))}\sqrt{K_0}T\nonumber\\
&+ 2 c \tau\left( L_\beta \| \uhnr \|_{L^2(\Omega;L^2(0,T;L^2(D)))}^2 + \| f \|_{L^2(\Omega;L^2(0,T;L^2(D)))}\| \uhnr\|_{L^2(\Omega;L^2(0,T;L^2(D)))} \right)T.\nonumber
\end{align*}
From the inequality above, we pass to the superior limit and obtain owing to Remark \ref{cvtpsi2} and Lemma \ref{CVfhnl}
\begin{align}\label{020426_b}
&\liminf_{m\to\infty}\int_0^Te^{- c t} \erwb \| \uhnr(t) \|_{L^2(D)}^2 \erwe\,dt +2\int_0^T\int_0^t e^{- c s}\erwb \int_D \psi_2(s,x)\,dx\erwe\,ds\,dt\nonumber\\
&+ 2\limsup_{m\to\infty} \int_0^T\int_0^t e^{- c s} \erwb \langle \plap(\uhnr(s)), \uhnr(s) \rangle_{V^*,V} \erwe\,ds\,dt\nonumber\\
\leq\ & 2 \int_0^T\int_0^t e^{- c s} \erwb (\mathcal{G}(s), G(u(s)))_{\HS} \erwe\,ds\,dt
- \int_0^T\int_0^t e^{- c s} \erwb \| G(u(s)) \|_{\HS}^2 \erwe\,ds\,dt,
\nonumber\\
&+2\int_0^T\int_0^t e^{- c s} \erwb (\mathcal{B}(s), u(s))_{L^2(D)} \erwe\,ds\,dt- c \int_0^T\int_0^t e^{- c s} \erwb \| u(s) \|_{L^2(D)}^2 \erwe\,ds\,dt\nonumber\\
&+ 2 \int_0^T\int_0^{t} e^{- c s} \erwb \big(f(s), u(s) \big)_{L^2(D)} \erwe\,ds\,dt+ \int_0^T\erwb \| u_0 \|_{L^2(D)}^2 \erwe\,dt.
\end{align}
Moreover, by applying It\^o's formula \cite[Theorem 4.2.5]{LR} for Hilbert-Schmidt processes to the functional $(t,v) \mapsto e^{- c t} \|v\|_{L^2(D)}^2$ defined on $[0,T]\times L^2(D)$ and to the stochastic process $u$, for any $t\in [0,T]$, we arrive at
\begin{align}\label{020426_c}
&e^{- c t}\erwb \| u(t) \|_{L^2(D)}^2\erwe + 2\erwb \int_0^t e^{- c s} \langle -\dvz(s)+y(s), u(s) \rangle_{V^*,V}\,ds\erwe\nonumber\\
=\ & \erwb\| u_0 \|_{L^2(D)}^2\erwe - c\erwb \int_0^t e^{- c s} \| u(s) \|_{L^2(D)}^2\,ds\erwe\\
&+ 2 \erwb\int_0^t e^{- c s} (\mathcal{B}(s)+f(s)-\psi(s), u(s))_{L^2(D)}\,ds\erwe
+ \erwb\int_0^t e^{- c s} \| \mathcal{G}(s) \|_{\HS}^2\,ds\erwe.\nonumber
\end{align}
By plugging \eqref{020426_c} into \eqref{020426_b}, we get
\begin{align*}
&\liminf_{m\to\infty}\int_0^Te^{- c t} \erwb \| \uhnr(t) \|_{L^2(D)}^2 \erwe\,dt +2\int_0^T\int_0^t e^{- c s}\erwb \int_D \psi_2(s,x)\,dx\erwe\,ds\,dt\nonumber\\
&+ 2\limsup_{m\to\infty} \int_0^T\int_0^t e^{- c s} \erwb \langle \plap(\uhnr(s)), \uhnr(s) \rangle_{V^*,V} \erwe\,ds\,dt\nonumber\\
\leq\ & 2 \int_0^T\int_0^t e^{- c s} \erwb (\mathcal{G}(s), G(u(s)))_{\HS} \erwe\,ds\,dt\nonumber\\
&- \int_0^T\int_0^t e^{- c s} \erwb \| G(u(s)) \|_{\HS}^2 \erwe\,ds\,dt,
\nonumber\\
&+\int_0^Te^{- c t}\erwb \| u(t) \|_{L^2(D)}^2\erwe\,dt + 2\int_0^T\erwb \int_0^t e^{- c s} \langle -\dvz(s)+y(s), u(s) \rangle_{V^*,V}\,ds\erwe\,dt\nonumber\\
&+ 2\int_0^T \erwb\int_0^t e^{- c s} (\psi(s), u(s))_{L^2(D)}\,ds\erwe\,dt-\int_0^T\erwb\int_0^t e^{- c s} \| \mathcal{G}(s) \|_{\HS}^2\,ds\erwe\,dt,
\end{align*}
which can be rewritten as
\begin{align}\label{020426_d}
&\liminf_{m\to\infty}\int_0^Te^{- c t} \erwb \| \uhnr(t) \|_{L^2(D)}^2 \erwe\,dt\nonumber\\
&+2\int_0^T\int_0^t e^{- c s}\erwb \int_D \big(\psi_2(s,x)-\psi(s,x)u(s,x)\big)\,dx\erwe\,ds\,dt\nonumber\\
&+ \int_0^T\int_0^t e^{- c s} \erwb \| G(u(s))-\mathcal{G}(s) \|_{\HS}^2 \erwe\,ds\,dt,\nonumber\\
&+ 2\limsup_{m\to\infty} \int_0^T\int_0^t e^{- c s} \erwb \langle \plap(\uhnr(s)), \uhnr(s) \rangle_{V^*,V} \erwe\,ds\,dt\\
\leq\ &\int_0^Te^{- c t}\erwb \| u(t) \|_{L^2(D)}^2\erwe\,dt + 2\int_0^T \int_0^t e^{- c s} \erwb\langle -\dvz(s)+y(s), u(s) \rangle_{V^*,V}\,ds\erwe\,dt.\nonumber
\end{align}
From \cite[Proposition 5.12]{BSVZ2025}, Lemma \ref{CVSppapn} and Remark \ref{cvtpsi2}, it is possible to prove that $\psi_2-\psi u\geq 0$, \Pas \ and almost everywhere in $(0,T)\times D$. Using the weak convergence of $(\uhnr)_m$ towards $u$ in $L^2(\Omega;L^2(0,T;L^2(D)))$, the following inequality holds true
\begin{equation*}
\int_0^Te^{- c t}\erwb \| u(t) \|_{L^2(D)}^2\erwe\,dt\leq \liminf_{m\to\infty}\int_0^Te^{- c t} \erwb \| \uhnr(t) \|_{L^2(D)}^2 \erwe\,dt
\end{equation*}
and these pieces of information refine inequality \eqref{020426_d} into
\begin{align}\label{020426_e}
& \limsup_{m\to\infty} \int_0^T\int_0^t e^{- c s} \erwb \langle \plap(\uhnr(s)), \uhnr(s) \rangle_{V^*,V} \erwe\,ds\,dt\nonumber\\
\leq\ &\int_0^T \int_0^t e^{- c s} \erwb\langle -\dvz(s)+y(s), u(s) \rangle_{V^*,V} \erwe\,ds\,dt.
\end{align}
Using an adaptation of the famous Minty's trick for maximal monotone operators to the stochastic case (see \cite[Theorem 2.1]{Lions} or \cite{barbu10}), as proposed by \cite{PrevotRockner}, and an additional integration with respect to time, for our $p$-Laplace operator $\plap$ (see \cite{PrevotRockner} Examples 4.1.5 p.60 and 4.1.9 p.65 combined with the proof of Theorem 4.2.4 p.86), we can conclude directly from \eqref{020426_e} that almost everywhere in $\Omega_T$, $-\dvz+y=\plap u$ in $V^*$, and, by classical arguments, that
\begin{align}\label{030426_c}
&\lim_{m\to\infty} \int_0^T\int_0^t e^{- c s} \erwb \langle \plap(\uhnr(s)), \uhnr(s) \rangle_{V^*,V} \erwe\,ds\,dt\nonumber\\
=\ &\int_0^T\int_0^t e^{- c s} \erwb \langle \plap(u(s)), u(s) \rangle_{V^*,V}\erwe\,ds\,dt.
\end{align}
Plugging \eqref{030426_c} in \eqref{020426_d}, one gets that
\begin{equation}\label{120526_a}
\int_0^T\int_0^t e^{- c s}\erwb \int_D \big(\psi_2(s,x)-\psi(s,x)u(s,x)\big)\,dx\erwe\,ds\,dt\leq 0.
\end{equation}
Thanks to Lemma \ref{CVSppapn} and Remark \ref{cvtpsi2}, we can affirm that \Pas \ and almost everywhere in $(0,T)\times D$, $0\leq u\leq 1$ and $\psi=\psi_1+\psi_2$ with $\psi_1\leq 0$ and $\psi_2\geq 0$, and then $\psi_2-\psi u\geq 0$. Using this last information in \eqref{120526_a} and following \cite[Proposition 5.12]{BSVZ2025}, one can conclude directly that $\psi\in \partial I_{[0,1]}(u)$.\\
Now it remains to identify $\mathcal{G}$ and $\mathcal{B}$, which are the weak limits of the sequences $(G(\uhnr))_m$ and $(\beta(\uhnr))_m$, respectively. 
To do so, we will prove that the convergence of $(\uhnr)_m$ towards $u$ holds strongly in $L^p(\Omega_T;V)$ and afterward take advantage of the Lipschitz continuity of $G$ and $\beta$ to conclude.
Owing to the fundamental algebraic inequality from \cite[Lemma 2.1., p.107]{Benedetto1989} which states that for any $p \in [2,\infty)$ and any $d\in \na$, there exists a constant $C_p\in\R_+^\star$ that depends only on $p$, such that
\begin{align}\label{130426_a}
&\forall \zeta, \eta \in \R^d,\quad C_p |\zeta - \eta|^p\leq \left( |\zeta|^{p-2}\zeta - |\eta|^{p-2}\eta \right) \cdot (\zeta - \eta),
\end{align}
one is able to write the following equalities for almost all $(\omega,s,x)$ in $\Omega\times (0,T)\times D$ (by omitting the random variable $\omega$):
\begin{align*}
&C_p\big(|\nabla \uhnr(s,x) - \nabla u(s,x)|^p + |\uhnr(s,x) - u(s,x)|^p\big)\smallskip\\
\leq\ &
 \big(|\nabla \uhnr(s,x)|^{p-2}\nabla \uhnr(s,x) - |\nabla u(s,x)|^{p-2}\nabla u(s,x)\big)
\cdot \big( \nabla\uhnr(s,x) -\nabla u(s,x) \big)\\
&+ \big( |\uhnr(s,x)|^{p-2}\uhnr(s,x) - |u(s,x)|^{p-2}u(s,x)\big)
\big( \uhnr(s,x) - u(s,x) \big)\smallskip\\
=\ &|\nabla \uhnr(s,x)|^{p-2}\nabla \uhnr(s,x) \cdot \nabla \uhnr(s,x)
+ |\uhnr(s,x)|^{p-2}\uhnr(s,x) \uhnr(s,x)\\
&- |\nabla \uhnr(s,x)|^{p-2}\nabla \uhnr(s,x) \cdot \nabla u(s,x)
- |\uhnr(s,x)|^{p-2}\uhnr(s,x) u(s,x)\\
&- |\nabla u(s,x)|^{p-2}\nabla u(s,x)\cdot \left( \nabla\uhnr(s,x) - \nabla u(s,x) \right)\\
&- |u(s,x)|^{p-2}u(s,x) \left( \uhnr(s,x) - u(s,x) \right).
\end{align*}
Using this, we arrive at the following estimate for the superior limit of the $L^p(\Omega_T;V)$ norm of $(\uhnr-u)_m$:
\begin{align*}
& \limsup_{m\to\infty}C_p\int_0^T\int_0^t e^{- c s} \erwb \int_D\big(|\nabla \uhnr(s,x) - \nabla u(s,x)|^p + |\uhnr(s,x) - u(s,x)|^p\big)\,dx \erwe\,ds\,dt\nonumber\\
\leq \ &\limsup_{m\to\infty}\int_0^T\int_0^t e^{- c s} \erwb \langle \plap \uhnr(s), \uhnr(s) \rangle_{V^*,V} \erwe\,ds\,dt\nonumber\\
&-\lim_{m\to\infty}\int_0^T\int_0^t e^{- c s} \erwb \int_D |\nabla \uhnr(s,x)|^{p-2}\nabla \uhnr(s,x) \cdot \nabla u(s,x)\,dx \erwe\,ds\,dt\nonumber\\
&-\lim_{m\to\infty}\int_0^T\int_0^t e^{- c s} \erwb \int_D |\uhnr(s,x)|^{p-2}\uhnr(s,x) u(s,x)\,dx \erwe\,ds\,dt\nonumber\\
&-\lim_{m\to\infty}\int_0^T\int_0^t e^{- c s} \erwb \int_D |\nabla u(s,x)|^{p-2}\nabla u(s,x)\cdot \left( \nabla\uhnr(s,x) - \nabla u(s,x) \right)\,dx \erwe\,ds\,dt\nonumber\\
&-\lim_{m\to\infty}\int_0^T\int_0^t e^{- c s} \erwb \int_D |u(s,x)|^{p-2}u(s,x) \left( \uhnr(s,x) - u(s,x) \right)\,dx\erwe\,ds\,dt.
\end{align*}
Using the weak convergences of $(|\nabla \uhnr|^{p-2} \nabla \uhnr)_m$ towards $\vec{z}$ in $L^{p'}(\Omega_T;(L^{p'}(D))^d)$, of $(|\uhnr|^{p-2} \uhnr)_m$ towards $y$ in $L^{p'}(\Omega_T;L^{p'}(D))$ and of $(\uhnr)_m$ towards $u$ in $L^p(\Omega_T;V)$, as well as inequality \eqref{020426_e}, we arrive at
\begin{align*}\label{030426_b}
& \limsup_{m\to\infty}C_p\int_0^T\int_0^t e^{- cs} \erwb \|\uhnr(s) - u(s)\|_V^p \erwe\,ds\,dt\nonumber\\
\leq \ &\int_0^T \int_0^t e^{- c s} \erwb\langle -\dvz(s)+y(s), u(s) \rangle_{V^*,V}\erwe\,ds\,dt\nonumber\\
&-\int_0^T\int_0^t e^{- c s} \erwb \int_D \vec{z}(s,x)\cdot \nabla u(s,x)\,dx \erwe\,ds\,dt\nonumber\\
&-\int_0^T\int_0^t e^{- c s} \erwb \int_D y(s,x)u(s,x)\,dx\erwe\,ds\,dt.\nonumber
\end{align*}
Recalling the meaning of the notation $\dvz$ given by \eqref{HWN} and identifying the integral $\int_D y(s,x)u(s,x)dx$ and the duality bracket $\langle y(s),u(s)\rangle_{V^*,V}$, one deduces that the right hand side term of this last inequality is zero, which proves the strong convergence of $(\uhnr)_m$ towards $u$ in $L^p(\Omega_T;V)$.
Hence, one may conclude, thanks to \eqref{G3_inequality_G_q2} and Assumption $(H_2)$, that $\mathcal{G}=G(u)$ and $\mathcal{B}=\beta(u)$.\\
Finally, going back to \eqref{31032026_a} with all this new information available, one gets the existence of a pair $(u,\psi)$ satisfying
\begin{equation*}
0\leq u\leq 1\text{ and }\psi\in \partial I_{[0,1]}(u),
\end{equation*}
\Pas \ and almost everywhere in $(0,T)\times D$, such that
\begin{equation*}\label{030426_d}
u(t)-u_0+\int_0^t\left(\plap u(s)+\psi(s)\right)\,ds
=\int_0^t G(u(s))\,dW(s)+\int_0^t\left(\beta(u(s))+f(s)\right)\,ds,
\end{equation*}
holds \Pas \ and for any $t\in [0,T]$, in $L^2(D)$.
\end{proof}
\begin{rem}
As mentioned by \cite{BSVZ2025}, thanks to \cite[Corollary 1.2.23, p.25]{HNVW16} with $(S,\mathcal{A},\mu)=(\Omega,\mathcal{F},\mathds{P})$, $(T,\mathcal{B},\nu)=((0,T),\mathcal{B}(0,T), D)$, $X=L^2(D)$, the set $L^2(\Omega;L^2(0,T;L^2(D)))$ is isometrically isomorphic to $L^2(0,T;L^2(\Omega;L^2(D)))$, which allows us to state that $(\uhnr)_m$ also converges strongly towards $u$ in $L^2(0,T;L^2(\Omega;L^2(D)))$.
Combining this last information with the boundedness of $(\uhnr)_m$ in $L^\infty(0,T;L^2(\Omega;L^2(D)))$ (given by Lemma \ref{210611_lem01}) one is able to state that the convergence of $(\uhnr)_m$ towards $u$ also holds strongly in $L^q(0,T;L^2(\Omega;L^2(D)))$ for any finite $q \geq 1$ thanks to Vitali's theorem \cite[Corollaire 1.3.3]{D}.
\end{rem}
\subsection{Uniqueness result}
\begin{prop}
Problem \eqref{equation} admits at most one solution in the sense of Definition \ref{solution}.
\end{prop}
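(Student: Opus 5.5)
Take two solutions $(u_1,\psi_1)$ and $(u_2,\psi_2)$ of Problem \eqref{equation} in the sense of Definition \ref{solution}, with the same data $u_0$, $f$. Subtracting the two identities \eqref{130426_b} shows that $w:=u_1-u_2$ satisfies, \Pas\ and for all $t\in[0,T]$, in $L^2(D)$,
\begin{align*}
w(t)+\int_0^t\big(\plap u_1-\plap u_2+\psi_1-\psi_2\big)\,ds=\int_0^t\big(G(u_1)-G(u_2)\big)\,dW(s)+\int_0^t\big(\beta(u_1)-\beta(u_2)\big)\,ds,
\end{align*}
with $w(0)=0$. Since $u_i\in L^p_{\mathcal{P}_T}(\Omega_T;V)$, the drift lies in $L^{p'}(\Omega_T;V^*)+L^2(\Omega_T;L^2(D))$, and $G(u_i)\in L^2_{\mathcal{P}_T}(\Omega_T;\HS)$. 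So we are in the framework of the It\^o formula \cite[Theorem 4.2.5]{LR}, already used for \eqref{020426_c}. We apply it to $e^{-ct}\|w(t)\|_{L^2(D)}^2$ with a constant $c>L_{G,2}^2+2L_\beta$, or to $\|w(t)\|_{L^2(D)}^2$ alone and then use Gronwall.

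After taking expectations, the stochastic integral vanishes; it is a true martingale because $w\in L^2(\Omega;\mathscr{C}([0,T];L^2(D)))$ and $G$ is Lipschitz, so BDG gives integrability. This yields
\begin{align*}
e^{-ct}\erww{\|w(t)\|_{L^2(D)}^2}&+2\erww{\int_0^te^{-cs}\langle\plap u_1-\plap u_2,w\rangle_{V^*,V}\,ds}+2\erww{\int_0^te^{-cs}(\psi_1-\psi_2,w)_{L^2(D)}\,ds}\\
&=\erww{\int_0^te^{-cs}\Big(\|G(u_1)-G(u_2)\|_{\HS}^2+2(\beta(u_1)-\beta(u_2),w)_{L^2(D)}-c\|w\|_{L^2(D)}^2\Big)ds}.
\end{align*}
The three terms are handled as follows.
\begin{itemize}
\item The $p$-Laplace term is nonnegative, by monotonicity or directly by \eqref{130426_a}: pointwise it dominates $C_p(|\nabla w|^p+|w|^p)\ge0$.
\item The multiplier term is nonnegative because $\partial I_{[0,1]}$ is monotone. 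Since $\psi_i\in\partial I_{[0,1]}(u_i)$ a.e., we have $(\psi_1-\psi_2)(u_1-u_2)\ge0$ a.e. in $\Omega_T\times D$. One can check the cases directly: with $0\le u_i\le1$, $\psi_i\le0$ only where $u_i=0$, and $\psi_i\ge0$ only where $u_i=1$.
\item On the right-hand side, \eqref{G3_inequality_G_q2} and $(H_2)$ bound the integrand by $(L_{G,2}^2+2L_\beta-c)\|w\|_{L^2(D)}^2\le0$.
\end{itemize}
Hence $\erww{\|w(t)\|_{L^2(D)}^2}=0$ for every $t$. Continuity of $w$ in $L^2(D)$ then gives $u_1=u_2$ \Pas\ for all $t$, and in particular in $L^p_{\mathcal{P}_T}(\Omega_T;V)$.

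It remains to identify $\psi_1=\psi_2$. With $u_1=u_2$, the difference of the two equations gives $\int_0^t(\psi_1-\psi_2)\,ds=0$ in $L^2(D)$ for all $t$, \Pas. Since $\psi_1-\psi_2\in L^2(\Omega_T;L^2(D))$, differentiating in $t$ (Lebesgue differentiation for Bochner integrals) gives $\psi_1=\psi_2$ a.e. in $\Omega_T\times D$.

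The main technical point is justifying the It\^o formula in the Gelfand triple $V\subset L^2(D)\subset V^*$ with a drift in $L^{p'}(\Omega_T;V^*)$ plus the $L^2$ part. This holds because $p\ge2$ and $L^2(\Omega_T;L^2(D))\hookrightarrow L^{p'}(\Omega_T;V^*)$, exactly as in the application leading to \eqref{020426_c}. Checking the pointwise monotonicity of $\partial I_{[0,1]}$ is elementary.
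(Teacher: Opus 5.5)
Your proposal is correct and is essentially the paper's proof. Both apply It\^o's formula \cite[Theorem 4.2.5]{LR} to $e^{-ct}\|u_1-u_2\|_{L^2(D)}^2$ with $c>L_{G,2}^2+2L_\beta$, get nonnegativity of the $p$-Laplace and multiplier terms from \eqref{130426_a} and the monotonicity of $\partial I_{[0,1]}$, bound the right-hand side via \eqref{G3_inequality_G_q2} and $(H_2)$, and then read off $\psi_1=\psi_2$ from the equation once $u_1=u_2$; you additionally make explicit two points the paper leaves implicit (the zero mean of the stochastic integral and the Lebesgue differentiation step), with one cosmetic slip: your case check should say $\psi_i<0$ only where $u_i=0$ and $\psi_i>0$ only where $u_i=1$.
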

\begin{proof}
Assume that there exist two pairs of solutions $(u, \psi)$ and $(\hat u, \hat\psi)$ for Problem \eqref{equation} in the sense of Definition \ref{solution}. By applying It\^o's formula \cite[Theorem 4.2.5]{LR} for Hilbert-Schmidt processes to the functional $(t,v) \mapsto e^{- \alpha t} \|v\|_{L^2(D)}^2$ defined (for a fixed $\alpha>0$) on $[0,T]\times L^2(D)$ and to the stochastic process $u-\hat u$, for any $t\in [0,T]$ we arrive at
\begin{align*}
&e^{- \alpha t}\erwb \| (u-\hat u )(t) \|_{L^2(D)}^2\erwe + 2\erwb \int_0^t e^{- \alpha s} \langle \plap u(s)-\plap\hat u(s), u(s)-\hat u(s) \rangle_{V^*,V}\,ds\erwe\nonumber\\
&+2 \erwb\int_0^t e^{- \alpha s} (\psi(s)-\hat \psi(s), u(s)-\hat u(s))_{L^2(D)}\,ds\erwe\nonumber\\
=\ & - \alpha\erwb \int_0^t e^{- \alpha s} \| u(s)-\hat u(s) \|_{L^2(D)}^2\,ds\erwe\\
&+ 2 \erwb\int_0^t e^{- \alpha s} (\beta(u(s))-\beta(\hat u(s)), u(s)-\hat u(s))_{L^2(D)}\,ds\erwe\nonumber\\
&+ \erwb\int_0^t e^{- \alpha s} \| G(u(s))-G(\hat u(s)) \|_{\HS}^2\,ds\erwe.\nonumber
\end{align*}
Using the algebraic inequality \eqref{130426_a} twice together with the monotonicity of the subdifferential operator $\partial I_{[0,1]}$, we obtain \Pas \ and almost everywhere in $(0,T)$,
\begin{equation*}
\langle \plap u-\plap\hat u, u-\hat u \rangle_{V^*,V}+(\psi-\hat \psi, u-\hat u)_{L^2(D)}\geq 0,
\end{equation*}
and by choosing the constant $\alpha$ such that $\alpha > L_{G,2}^2 + 2 L_\beta$,
one gets that $u=\hat u$ in $L^2(\Omega;\mathscr{C}(0,T;L^2(D)))$. Finally, since the pairs $(u, \psi)$ and $(u, \hat \psi)$ satisfy \Pas \ and for all $t\in [0,T]$ the equality \eqref{130426_b} from Definition \ref{solution}, one arrives at $\psi=\hat \psi$ in $L^2(\Omega_T;L^2(D))$, and thus the uniqueness result holds.
\end{proof}
\textbf{Acknowledgments} This study received support from the German Research Foundation project (ZI 1542/3-1) and from diverse Procope programs: Procope Mobility Program (DEU-22-0004 LG1) and Project-Related Personal Exchange France-Germany (49368YE).
\bibliographystyle{plain}
\bibliography{main.bib}
\end{document}